\newenvironment{psmallmatrix}
  {\left(\begin{smallmatrix}}
  {\end{smallmatrix}\right)}
\title{Spectral equivalence of smooth group schemes over principal ideal local rings}
\author{Itamar Hadas}
\date{April 2021}
\newtheorem{thm}{Theorem}[subsection]
\newtheorem{lemma}[thm]{Lemma}
\newtheorem{defn}[thm]{Definition}
\newtheorem{remark}[thm]{Remark}
\newtheorem{cor}[thm]{Corollary}
\newtheorem{prop}[thm]{Proposition}
\newtheorem{notation}[thm]{Notation}
\newtheorem{empt}[thm]{\empty}
\theoremstyle{plain} % just in case the style had changed
\newcommand{\thistheoremname}{}
\newtheorem{genericthm}[thm]{\thistheoremname}
\renewcommand{\quote}[1]{``#1''}
\newcommand{\embed}{\hookrightarrow}
\newcommand{\iso}{\cong}
\newcommand{\Z}{\mathbb{Z}}
\newcommand{\N}{\mathbb{N}}
\newcommand{\F}{\mathbb{F}}
\newcommand{\A}{\mathbb{A}}
\newcommand{\R}{\mathbb{R}}
\newcommand{\C}{\mathbb{C}}
\newcommand{\K}{\mathbb{K}}
\newcommand{\UU}{\mathbb{U}} %\U already exists
\newcommand{\G}{\mathbb{G}}
\newcommand{\g}{\mathfrak{g}}
\newcommand{\uu}{\mathfrak{u}} %\u already exists
\newcommand{\cA}{\mathcal{A}} %curly A
\newcommand{\cD}{\mathcal{D}}
\newcommand{\cG}{\mathcal{G}}
\renewcommand{\O}{\mathcal{O}} %\mathbb{O} will never be used
\newcommand{\cX}{\mathcal{X}}
\newcommand{\Fc}{\overline{\mathbb{F}}}
\newcommand{\Fp}{\mathbb{F}_p}
\newcommand{\Fq}{\mathbb{F}_q}
\newcommand{\Zq}{\mathbb{Z}_q}
\newcommand{\Qp}{\mathbb{Q}_p}
\newcommand{\Qq}{\mathbb{Q}_q}
\newcommand{\mq}{\mathfrak{m}_q}
\newcommand{\dimirr}{\mathrm{dimirr}} 
\newcommand{\mdimirr}{\mathcal{M}_\mathrm{dimirr}}
\newcommand{\dimirrG}{\mathrm{dimirr}G}
\newcommand{\irr}{\mathrm{irr}}
\newcommand{\irrG}{\mathrm{irr}G}
\newcommand{\dimG}{{\dim G}}
\newcommand{\cmp}{\mathrm{cmp}}
\renewcommand{\ker}{\mathrm{Ker}}
\newcommand{\Gal}{\mathrm{Gal}}
\newcommand{\Hom}{\mathrm{Hom}}
\newcommand{\Mor}{\mathrm{Mor}}
\newcommand{\Out}{\mathrm{Out}}
\newcommand{\Inn}{\mathrm{Inn}}
\newcommand{\Aut}{\mathrm{Aut}}
\newcommand{\Ad}{\mathrm{Ad}}
\newcommand{\charr}{\mathrm{char}}
\renewcommand{\Im}{\mathrm{Im}}
\newcommand{\Spec}{\mathrm{Spec}}
\newcommand{\GL}{\mathrm{GL}}
\newcommand{\SL}{\mathrm{SL}}
\newcommand{\roots}{\mathrm{roots}}
\newcommand{\topn}{\langle}
\newcommand{\tc}{\rangle}
\newcommand{\Gtt}{\Tilde{G'}} %G tag tilde
\begin{document}
\maketitle
\tableofcontents

\begin{abstract}
Let $\cG$ be a smooth linear group scheme of finite type. For any positive integer $k$ and a finite field $\F$, let $W_k(\F)$ be the ring of Witt vectors of length $k$ over $\F$. We show that the group algebras of $\cG(\F[t]/(t^k))$ and $\cG(W_k(\F))$ are isomorphic (i.e. the multi-sets of the dimensions of the irreducible representations are equal) for any positive integer $k$ and finite field $\F$ with large enough characteristic. We also prove that if $\charr\F$ is large enough, then the cardinality of the set $\{\dim\rho\big|\rho\in \irr(\cG(\F))\}$ is bounded uniformly in $\F$. 
\end{abstract}
\maketitle
\section{Introduction}
\label{sec: intro}
\subsection{Motivation and main results}

Let $p$ be a prime number and let $q=p^l$ be a power of $p$. We will denote by $\Qq$ the unique unramified extension of $\Qp$ of degree $l$. Denote by $\Zq$ the ring of integers of $\Qq$ and by $\mq$ its maximal ideal. For any compact group $G$, denote by $\mdimirr(G)$ the multi-set of complex-valued irreducible representations of $G$. Let $\cG$ be a smooth linear group scheme of finite type. Let $k$ be a positive integer. 
In this paper, we will study the similarities between complex-valued representations of $\cG(\Fq[t]/(t^k))$ and $\cG(\Zq/\mq^k)$. In particular, we will be interested in $\mdimirr(\cG(\Fq[t]/(t^k)))$ and $\mdimirr(\cG(\Zq/\mq^k))$. Representations of such groups were already studied before. Onn \cite{Onn}, conjectured that those two multi-sets mentioned above are equal for $\cG=\GL_n$ for any positive integer $n$.\footnote{In fact, this is only a special case of his conjecture. For the full conjecture, see Conjecture 1.3 in \cite{Onn}} Singla \cite{Singla}, found a dimension-preserving bijection between the representations of both groups for $\cG=\GL_n$ and $k=2$, proving Onn's conjecture for $k=2$. Later in \cite{SinglaTwo}, she found such a bijection for other classical group such as $\mathrm{SL}_n,\mathrm{Sp}_n,\mathrm{O}_n$. Next, Stasinski and Vera-Gajardo \cite{kIsTwoForReductive} proved the equality of those multi-sets for any reductive group $\cG$, $k=2$, and assuming a certain condition on $p$. In another direction, Stasinski \cite{nIsTwo} classified all the representations of both groups when $\cG=\GL_2$. In the opposite direction, Hassain and Singla \cite{CounterExample} proved that those two multi-sets are not equal for $\cG=\mathrm{SL}_n$, $q=2$, and any even positive integer $k>2$. For more on the history of this problem, see $\cite{Singla}$ and $\cite{Onn}$. In this paper, we will prove the equality of those two multi-sets for any fixed group $\cG$ and $k$, given that the prime $p$ is large enough(where \quote{large enough} depends on $G,k$).
\newline\newline
The motivation behind this problem comes from the study of complex continuous representations of $\GL_n(\O)$ where $\O$ is the ring of integers of some non-archimedean local field $\F$. Currently, we know much more about the case when $\O=\Z_q$ for some prime-power $q$, than the case that $\O=\Fq[[t]]$ for example. Jaikin-Zapirain \cite{JaikinZapirain}, for example, studied $\mdimirr(\SL_n(\Z_q))$ using Dirichlet series, which is called ``the representation zeta function of a group'' (see Definition \ref{zeta def}). In particular he proved that, if $2\nmid q$, then the representation zeta function of $\SL_n(\Z_q)$ is of the form $\zeta_{\SL_n(\Z_q)}(s)=\sum_{i=1}^k n_i^{-s}f_i(p^{-s})$ where the $n_i$'s are integers and the $f_i$'s are rational functions\footnote{In fact, he proved that for all compact $p$-adic analytic groups $G$ with the property \quote{FAB}(which states $[H:[H,H]]$ is finite for any open subgroup $H$ of $G$)}. On the other hand, we know very little about the case where $\O=\Fq[[t]]$. If Onn's conjecture is correct for $\SL_n$, for some $n,q$ (for all $k$), then we would be able to tell much about $\SL_n(\Fq[[t]])$ (since any continuous irreducible representations of an inverse limit of finite groups $G_i$ is an irreducible representation of one of them). \newline\newline
Before stating our main results, we would need some definitions: 

\begin{defn}[Representation zeta function]
\label{zeta def}
Let $G$ be a finite group. The representation zeta function of $G$ is defined to be:
$$\zeta_{G}(s):=\sum_{\rho\in \irr(G)}\frac{1}{(\dim\rho)^{s}}$$
where $\irr(G)$ is the set of irreducible representations of $G$.
\end{defn}

\begin{defn}
\label{group equiv. def.}
Let $G_{1},G_{2}$ be two finite groups. We say they are equivalent (denoted by $G_{1}\sim G_{2}$) if one of the following equivalent conditions happens: 

1. $\C G_1 \iso \C G_2$.

2. The multiset $\{\dim\rho|\rho\in \irrG_{i}\}$ is the same for both groups.

3. $\zeta_{G_{1}}=\zeta_{G_{2}}$.

4. $\zeta_{G_{1}}(s)=\zeta_{G_{2}}(s)$ for infinitely many choices of $s\in \C$.
\end{defn}

Since the definition of equivalence depends only on the dimensions of our representations, it makes sense to use the following notations: 

\begin{notation} 
\label{dimirr def.}
Let $G$ be a finite group. Define the set: \[\dimirr(G) :=\{\dim\pi | \pi \in \irrG\}\] and denote \[N(G):=|\dimirr(G)|.\]
\end{notation}

We can now state our main results:
\begin{thm}
\label{main thm}
Let $\cG$ be a smooth linear group scheme of finite type. for any positive integer $k$, prime $p$ and $p$-power $q$, define: \[G_1(k,q):=\cG(\Fq[t]/(t^k)),G_2(k,q):=\cG(\Zq/\mq^k).\] Then for any positive integer $k$, there exists an integer $M>0$ (depending on $\cG,k$) s.t. for any prime $p>M$ and any $p$-power $q$, we have: \[G_1(k,q) \sim G_2(k,q).\] 
\end{thm}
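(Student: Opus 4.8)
The plan is to compute the multiset $\dimirr$ of both groups by the \emph{same} combinatorial recipe---built from the reductive quotient $\cG(\Fq)$ and the graded Lie algebra of $\cG$---and then to observe that this recipe cannot tell $\Fq[t]/(t^k)$ apart from $\Zq/\mq^k$ because the two rings have isomorphic associated graded rings. Throughout, $R$ denotes either $R_1 := \Fq[t]/(t^k)$ or $R_2 := \Zq/\mq^k$; let $\pi\in R$ be a uniformizer and $\mathfrak n = (\pi)$ its maximal ideal, so $R/\mathfrak n = \Fq$, $\mathfrak n^k = 0$, and---this is the single structural fact we exploit---$\mathrm{gr}_{\mathfrak n}(R) := \bigoplus_{i\ge0}\mathfrak n^i/\mathfrak n^{i+1}$ is isomorphic to $\Fq[t]/(t^k)$ for \emph{both} choices of $R$. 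After inverting a suitable integer $M_0 = M_0(\cG,k)$ I may assume $\cG$ is smooth and affine over the base with free Lie algebra $\g := \mathrm{Lie}(\cG)$, and I fix a closed embedding $\cG\embed\GL_N$; from now on $p > M_0$, and in particular $p > k$.

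First I set up the congruence filtration $\cG(R) = G^0 \supseteq G^1 \supseteq \cdots \supseteq G^k = 1$ with $G^i := \ker\!\big(\cG(R)\to\cG(R/\mathfrak n^i)\big)$. Smoothness provides: surjectivity of all reduction maps; $G^1$ is a finite $p$-group of nilpotency class $< k$; and for $i\ge1$ there are canonical isomorphisms $G^i/G^{i+1}\iso\g\otimes_{\Fq}(\mathfrak n^i/\mathfrak n^{i+1})$ of $\cG(\Fq)$-modules (with $\cG(\Fq)$ acting through $\Ad$ on $\g_{\Fq}$). Assembling these, the associated graded object $\mathrm{gr}(\cG(R)) := \cG(\Fq)\ltimes\bigoplus_{i\ge1}G^i/G^{i+1}$, equipped with the graded Lie bracket read off from group commutators, is canonically identified with the analogous object manufactured from $\g$ and $\mathrm{gr}_{\mathfrak n}(R)$---hence is literally \emph{the same} for $R_1$ and $R_2$. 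Moreover, since $p > k$, the Baker--Campbell--Hausdorff series truncates and has denominators prime to $p$, so $\exp\colon(\g\otimes\mathfrak n,\mathrm{BCH})\xrightarrow{\ \sim\ }G^1$ is an isomorphism intertwining the adjoint and conjugation actions of $\cG(R)$.

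Now I apply the orbit method à la Kirillov to the $p$-group $G^1$ (applicable since $p$ exceeds its nilpotency class): writing $\widehat{\g\otimes\mathfrak n} := \Hom(\g\otimes\mathfrak n,\C^\times)$ for the Pontryagin dual and letting $G^1$ act coadjointly via $\exp$, one obtains a $\cG(\Fq)$-equivariant bijection $\irr(G^1)\leftrightarrow\{\text{coadjoint }G^1\text{-orbits in }\widehat{\g\otimes\mathfrak n}\}$ with $\dim\chi = |\mathcal O_\chi|^{1/2}$, where $\mathcal O_\chi$ is the corresponding coadjoint orbit. Running Clifford theory along $1\to G^1\to\cG(R)\to\cG(\Fq)\to1$: to each $\cG(\Fq)$-orbit $[\chi]$ in $\irr(G^1)$, with inertia group $S_\chi\le\cG(\Fq)$, is attached a Schur cocycle $\alpha_\chi\in H^2(S_\chi,\C^\times)$ (the obstruction to extending the Heisenberg representation of $\chi$), and
\[
\dimirr(\cG(R)) \;=\; \bigsqcup_{[\chi]}\Big\{\,[\cG(\Fq):S_\chi]\cdot|\mathcal O_\chi|^{1/2}\cdot d \;:\; d\in\dimirr\big(\C^{\alpha_\chi}[S_\chi]\big)\,\Big\},
\]
the union over $\cG(\Fq)$-orbits in $\irr(G^1)$. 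It therefore suffices to exhibit a bijection between the $\cG(\Fq)$-orbit sets for $R_1$ and $R_2$ that matches the triple $\big(|\mathcal O_\chi|,\,S_\chi,\,\alpha_\chi\big)$ term by term.

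This last step is the heart of the matter, and where $p > M$ (for a possibly larger $M = M(\cG,k)$) enters. The coadjoint action, the orbit sizes, the inertia groups and the cocycles $\alpha_\chi$ are all governed by the single bilinear datum $(x,y)\mapsto[x,y]$ on $\g\otimes\mathfrak n$ together with the $\cG(\Fq)$-action on $\g_{\Fq}$; filtering $\widehat{\g\otimes\mathfrak n}$ by $\mathfrak n$-adic order and inducting on the order of the leading symbol of $\xi$, one shows that the $G^1$-orbit of $\xi$, the conjugacy class of $S_\xi$ with its action, and the class of $\alpha_\xi$ are determined by the \emph{graded} bracket $\bigoplus_{i,j}\big(\g\cdot\pi^i\big)\times\big(\g\cdot\pi^j\big)\to\g\cdot\pi^{i+j}$ and the leading symbols alone---the lower-order corrections, which is exactly where $R_2$ (having $\pi^{k-1}\cdot\pi^{k-1}=0$ yet $p\in\mathfrak n$, so that the Lie structure mixes with multiplication by $p$) differs from $R_1$, being absorbable once $p$ is large, since the ambient radical and symplectic-form computations over $\Fq$ then behave as in characteristic zero. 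The graded bracket being identical for $R_1$ and $R_2$, this produces the required bijection; the displayed multisets then coincide and $G_1(k,q)\sim G_2(k,q)$. Finally one collects every lower bound on $p$ imposed above into a single $M = M(\cG,k)$, independent of $q$ since each is a non-vanishing condition on finitely many polynomials over $\Z$. The main obstacle is precisely this graded-invariance step: one must carry out the leading-symbol induction uniformly for both rings---constructing the bijection directly on orbit data rather than evaluating each side in isolation---or, more ambitiously, identify $\C[\cG(R)]$ with a ``universal'' algebra attached to the pair $(\g,\mathrm{gr}_{\mathfrak n}R)$; the Witt-vector side, where the graded pieces are not elementary abelian groups and the Lie structure is entangled with multiplication by $p$, is where all the delicate bookkeeping concentrates.
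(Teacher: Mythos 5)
Your argument has a genuine gap at exactly the point you flag as ``the heart of the matter''. The graded-invariance step is asserted, not proved, and it is not a technicality: the congruence kernels $G^1$ for $R_1=\Fq[t]/(t^k)$ and $R_2=\Zq/\mq^k$ are genuinely non-isomorphic groups for $k\ge 3$ (the first has exponent $p$, the second has elements of order $p^{k-1}$), and only their associated graded objects agree. So the claimed $\cG(\Fq)$-equivariant matching of coadjoint orbits together with orbit sizes, inertia groups $S_\chi$ \emph{and} Schur cocycles $\alpha_\chi$ is an ungraded comparison between different groups, and the assertion that all of this is ``determined by the graded bracket and the leading symbols alone, the lower-order corrections being absorbable once $p$ is large'' is precisely a Clifford-theoretic reformulation of Onn's conjecture, not a reduction of it. Nothing in your leading-symbol induction explains how to absorb the mixing of the bracket with multiplication by $p$ on the Witt side, uniformly in $q$; note also that the extension $1\to G^1\to\cG(R)\to\cG(\Fq)\to1$ splits for $R_1$ (jet group) but in general not for $R_2$, so the semidirect product $\mathrm{gr}(\cG(R))$ is not the actual group and the cocycle data you need to match is exactly what the graded object forgets. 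The known counterexample of Hassain--Singla for $\SL_n$, $q=2$, $k$ even shows the orbit data is \emph{not} formally forced by the graded structure, so any large-$p$ argument must do real work where your sketch only gestures.

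For comparison, the paper deliberately bypasses any bijection between representations. It (i) bounds the number of distinct character degrees $N(G_1(k,q))$ uniformly in $q$ by applying Theorem \ref{second big thm lighter version} to the jet scheme $J_k(\cG)$; (ii) shows by the semi-polynomial/Descartes argument (Proposition \ref{equal zeta}) that two groups are equivalent once their representation zeta functions agree at $4C+1$ points; and (iii) obtains these finitely many equalities from Frobenius's formula (Theorem \ref{frob}), which turns $\zeta_G(2m)$ into the number of solutions of $[g_1,h_1]\cdots[g_m,h_m]=1$, i.e.\ into $|\cX(R)|$ for a finite-type scheme $\cX$, and then from the motivic-integration fact that $|\cX(\Zq/\mq^k)|=|\cX(\Fq[t]/(t^k))|$ for all large $p$. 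In other words, the step your proposal leaves open is the entire difficulty, and the paper's route (degree-set finiteness plus word-equation point counts) is designed precisely to avoid having to compare orbits, inertia groups and cocycles across the two rings.
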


To prove the above theorem, we will prove another interesting theorem:

\begin{thm}
\label{second big thm lighter version}
Let $\cG$ be a smooth linear group scheme of finite type. Then there exists a constant $C = C(\cG)$ s.t. for any large enough prime $p$ and any finite field $\F$ of characteristic $p$, we have $N(\cG(\F)) \leq C$.
\end{thm}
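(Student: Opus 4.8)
I would argue by induction on the (relative) dimension of $\cG$, having first spread $\cG$ out to a smooth affine group scheme over some $\Z[1/N]$. To make the induction close up I would prove the stronger statement that there is a constant $C=C(\cG)$ so that for every prime $p\nmid N$ above an explicit bound, every finite field $\F$ of characteristic $p$, and every central extension $\widetilde{\cG(\F)}\to\cG(\F)$ whose kernel lies in a fixed bounded family of finite abelian groups (the family, together with a uniform bound on the $2$-cocycles involved, being an output of the Clifford-theoretic analysis below --- roughly, groups of $\F$-points of central extensions of algebraic groups by $\G_a$, $\G_m$, or Witt groups), one has $N(\widetilde{\cG(\F)})\le C$. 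Passing to central extensions is forced by the Clifford theory, and a point to be checked is that the relevant Schur data stays bounded for $p$ large. The base case $\dim\cG=0$ is immediate: for $p$ large $\cG_{\F}$ is \'etale, so $|\cG(\F)|\le|\cG(\overline{\F})|$ is bounded, hence so is $N(\cG(\F))$.

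\emph{Reduction to $\cG$ connected, and the reductive case.} Over a suitable $\Z[1/N]$ the quotient $\cG/\cG^{\circ}$ is finite \'etale of order $m_0$ (the number of geometric components of the generic fibre), so $[\cG(\F):\cG^{\circ}(\F)]\le m_0$ uniformly in $\F$; since $N(G)\le[G:H]\cdot N(H)$ for $H\trianglelefteq G$ by Clifford theory (and likewise for the central extensions), this costs only a factor $m_0$ and we may take $\cG$ connected. If $\cG$ is moreover reductive, then after enlarging $N$ it is reductive over $\Z[1/N]$ with a fixed root datum, so for $p$ large $\cG(\F)$ is a finite reductive group of one of boundedly many types with $q=|\F|$ as the parameter. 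By Lusztig's classification of its irreducible characters --- the Jordan decomposition of characters together with the generic degrees of unipotent characters --- every degree equals $[\cG^{\ast}(\F):C_{\cG^{\ast}}(s)(\F)]_{p'}$ times a generic degree of the semisimple centraliser $C_{\cG^{\ast}}(s)$; as these centralisers fall into boundedly many conjugacy types and each factor is then one of boundedly many polynomials in $q$, the degree set is bounded. For $p,q$ large the Schur multiplier of $\cG(\F)$ is bounded, so the central extensions reduce to ordinary characters of an isogenous reductive group and obey the same bound.

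\emph{The non-reductive inductive step.} Assume now $\cG$ is connected but not reductive; set $U=R_u(\cG)$ (defined over $\Z[1/N]$ after enlarging $N$) and $\mathbf{H}=\cG/U$, which is reductive of dimension $<\dim\cG$. By Lang's theorem $1\to U(\F)\to\cG(\F)\to\mathbf{H}(\F)\to1$ is exact with $U(\F)$ a finite $p$-group. For $p$ above the nilpotency class of $\mathrm{Lie}(U)$ the Kirillov--Kazhdan orbit method parametrises $\widehat{U(\F)}$ by the coadjoint $U(\F)$-orbits $\mathcal{O}$ in $\mathrm{Lie}(U)^{\ast}(\F)$, with $\dim\theta=|\mathcal{O}|^{1/2}$ one of at most $\tfrac12\dim U+1$ powers of $q$, and with the conjugation action of $\cG(\F)$ on $\widehat{U(\F)}$ corresponding to the coadjoint action on $\mathrm{Lie}(U)^{\ast}(\F)$. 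Running Clifford theory along $U(\F)$, an irreducible $\rho$ lying over $\theta$ satisfies
\[
\dim\rho \;=\; [\mathbf{H}(\F):\mathrm{Stab}_{\mathbf{H}(\F)}(\theta)]\cdot\dim\theta\cdot\dim\bar\psi ,
\]
where $\bar\psi$ is an irreducible $\omega_\theta$-projective representation of $\mathrm{Stab}_{\mathbf{H}(\F)}(\theta)$. Up to bounded index (Lang again) this stabiliser is the group of $\F$-points of an algebraic subgroup $\mathbf{H}_{[\chi]}\subseteq\mathbf{H}$ of dimension $\le\dim\mathbf{H}<\dim\cG$; and because a linear action of an algebraic group has only finitely many stabiliser types --- a stratification which spreads out over $\Z[1/N]$ --- the pairs $(\mathbf{H}_{[\chi]},\omega_\theta)$ run over a finite list of algebraic data, each being a lower-dimensional algebraic group together with a central extension of the admissible shape. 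Feeding each such pair into the induction hypothesis bounds the number of values of $\dim\bar\psi$; the index $[\mathbf{H}(\F):\mathbf{H}_{[\chi]}(\F)]$ is one of finitely many polynomials in $q$; $\dim\theta$ is one of finitely many powers of $q$; and the trivial $\theta$ contributes exactly $\dimirr(\mathbf{H}(\F))$, bounded by the induction hypothesis. Hence $N(\cG(\F))$ is bounded, completing the induction.

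\emph{Expected main obstacle.} Everything soft is in the component-group reduction and everything deep in the reductive case can be quoted (Lusztig); the real work --- and the main obstacle --- is making the d\'evissage along the unipotent radical uniform in $q$. Concretely one must (i) have the Kirillov orbit method in a form valid simultaneously for all finite fields of characteristic $p$ once $p$ is large, with its $\cG(\F)$-equivariance; (ii) identify the inertia subgroups and the obstruction cocycles $\omega_\theta$ with $\F$-points of boundedly many algebraic stabilisers carrying central extensions by $\G_a$, $\G_m$, or Witt groups, so that the recursion genuinely stays within ``representations of linear algebraic groups over finite fields'' rather than in an unbounded family of abstract projective representations --- this rests on the finiteness of stabiliser types being compatible with reduction modulo $p$, and on a uniform bound for the Schur data; and (iii) keep every threshold on $p$, and every polynomial or orbit count appearing above, bounded independently of $q$.
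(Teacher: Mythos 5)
Your skeleton (induction on dimension; reduction to the connected case; quoting deep results for the reductive case; orbit method plus Clifford theory along the unipotent radical) matches the paper's, but the two devices you rely on to close the induction are exactly where the genuine gaps lie, and the paper does something different at both points. First, your recursion runs over a purported finite list of algebraic data, justified by the assertion that ``a linear action of an algebraic group has only finitely many stabiliser types''. No such finiteness theorem is available in this setting: in the inductive step the acting group is not reductive, the characteristic is positive, and the relevant stabilisers are stabilisers in $\cG$ (or its reductive quotient) of coadjoint $U(\F)$-orbits; as the orbit varies these subgroups need not fall into finitely many conjugacy -- or even isomorphism -- classes, so the ``finite list of pairs $(\mathbf{H}_{[\chi]},\omega_\theta)$'' is not something you can feed an induction hypothesis into. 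The paper avoids needing any such list by proving the stronger Theorem \ref{second big thm}: a bound uniform over \emph{all} algebraic subgroups of $(\GL_n)_{\Fc}$ of bounded dimension and bounded complexity. With that formulation one only has to show that the inertia group $T(\chi)$ is the group of $\F$-points of a proper algebraic subgroup of bounded complexity (Lemma \ref{the non-isotypic case}, via closedness of unipotent orbits, Lemma \ref{cmp(Im phi)} and Corollary \ref{lang weil cor}), and that the index $[G(\F):T(\chi)]$ takes boundedly many values (Lemma \ref{bound on the options of cardinalities of algebraic groups}); ``bounded complexity'' is the invariant that replaces your ``finite list'' and is what actually makes the recursion close up.

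Second, your strengthened statement drags projective representations (central extensions of ``admissible shape'' with uniformly bounded Schur data) through the induction, and you yourself flag that this uniformity is unproven; as written this is a gap, not a checkable step. The paper never meets the issue: by Fong's first reduction (Theorem \ref{fong's first reduction}), an irreducible of $G(\F)$ either is induced from an \emph{honest} irreducible of the proper subgroup $T(\chi)$ -- no cocycle appears because one never passes to the quotient $T(\chi)/U(\F)$ -- or restricts isotypically to $U(\F)$, and in the isotypic case the vanishing $H^2((G/U)(\F),\mu_{p^\infty})=1$ for large $p$ (Lemma \ref{H^2=1}) upgrades the a priori projective decomposition to an honest tensor factorization $\rho\iso\pi_1\otimes\pi_2$ (Corollary \ref{the isotypic case}). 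So only ordinary representations ever occur. Your treatment of the reductive case via Lusztig's Jordan decomposition is a workable alternative to the paper's route (bounds for quasi-simple groups of Lie type together with a bound on the number of reductive $\F$-forms of bounded dimension, Lemma \ref{bound on the number of reductive groups}), but it does not repair the two gaps above.
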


\begin{remark}
In fact, we will prove a much stronger claim that bounds the number of possible values that can appear in $\dimirr(G)$ for all linear algebraic groups $G$ with bounded degree (when considered with some embedding into $\GL_n$). See Theorem \ref{second big thm} and Remark \ref{recall cmp equiv. to deg}. 
\end{remark}

\subsection{Preliminaries, notations, and terminology}
\label{subsec: Preliminaries}
In this subsection, we establish the notations and terminology we use in this paper.
Our terminology and notations are as follows:
\begin{itemize}
    \item Binary operation on sets: Let $A,B$ be 2 sets of complex numbers. Let $*$ be a binary operation on $\C$. Then we will denote: $A*B:=\{a*b\;|\;a\in A,\;b\in B\}$
    \item Representations: As was written above, we are interested in complex-valued representations. Therefore, whenever we say ``representation'', we mean a complex-valued one.
    
    \item Restrictions and inductions: Let $G$ be an abstract group and let $H$ be a subgroup of $G$. If $\rho$ is a representation of $G$, then we will denote by $\rho\downarrow_H$ the restriction to $H$. Similarly, if $\pi$ is a representation of $H$, then we will denote by $\pi\uparrow^G$ the induction to $G$.

    \item Affine varieties: Throughout this paper, when we will write ``affine variety'' we will always assume we have some closed embedding of that variety in $\A^n$ (meaning we think of an affine variety as a zero-set of some polynomials in $\A^n$). Moreover, when we will talk about linear algebraic subgroups of $(\GL_n)_{\K}$ (for some algebraically closed field $\K$), we will take the standard embedding of $(\GL_n)_{\K}$ in $\A^{n^2+1}$ and the embedding of our group that comes with it.
    
    \item Standard notations from group theory: Let $G$ be an abstract group. We denote the center of $G$ by $Z(G)$ and the derived subgroup of $G$ by $G'$. If $G$ is also an algebraic group, we will denote by $G^0$ the connectivity component of the identity in $G$. 
    
    \item Simple algebraic groups and quasi-simple groups: We say that an algebraic group $G$, defined over a field $\F$, is simple if $G$ does not have any Zariski-closed, normal, connected, and non-trivial subgroup, that is defined over $\F$. We say that it is absolutely simple if it is simple over $\Fc$. We say that an abstract group $G$ is quasi-simple if $G/Z(G)$ is simple and $G$ is perfect (i.e. equal to its commutator subgroup). If there exists an absolutely simple and simply connected algebraic group $H$, defined over a finite field $\F$, of Lie type $L$ s.t. $G/Z(G)\iso H(\F)/Z(H(\F))$, then we say that $G$ has Lie type $L$.
\end{itemize}

\subsection{Structure of the paper}
In Section \S\ref{sec: analysis} we will use methods from basic analysis to show that to prove Theorem \ref{main thm}, it is enough to prove that $\zeta_{G_1}=\zeta_{G_2}$ on some finite number of points, which is independent of our prime $p$ (assuming Theorem \ref{second big thm lighter version}). 
\\ \\
Next, in Sections \S\ref{sec: clifford} and \S\ref{sec: algebraic geometry}, we will state and prove some key lemmas and definitions in representation theory and algebraic geometry that would help us later. In Section \S\ref{sec: algebraic groups} we will use those tools to tackle Theorem \ref{second big thm lighter version} (for sketch of the proof, see Theorem \ref{second big thm} and \ref{scketch of the proof of second big thm}).
\newline
Finally, in Section \S\ref{sec: main thm} we will prove Theorem \ref{main thm} using Theorem \ref{second big thm lighter version}, Section \S\ref{sec: analysis} and an important lemma from motivic analysis.
\subsection{Acknowledgements}
I would like to thank both Prof. Avraham Aizenbud and Prof. David Soudry for their guidance throughout the thesis research.
Additionally, I would like to mention Itay Glazer, Yotam Hendel, Elad Sayag, Guy Lachman, Shvo Regavim, and Lior Schain for fruitful conversions, ideas, references, advice, and help with proof-reading the paper.
\section{Analysis}
\label{sec: analysis}
In this section, we will prove the following proposition:
\begin{prop}
\label{equal zeta}
Let $G_1,G_2$ be two finite groups. Assume that $N(G_1)\leq k$ for some number $k$ and that $\zeta_{G_1}(s)=\zeta_{G_2}(s)$ for $4\cdot k+1$ different values of s. Then $G_1,G_2$ are equivalent. 
\end{prop}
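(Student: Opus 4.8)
The plan is to think of the representation zeta function $\zeta_G(s) = \sum_{d \in \dimirr(G)} a_d \cdot d^{-s}$, where for each dimension $d$ occurring among the irreducibles of $G$ we let $a_d$ be the number of irreducible representations of $G$ of dimension exactly $d$. Since $N(G_1) \leq k$, there are at most $k$ distinct values $d_1 < d_2 < \cdots < d_r$ (with $r \leq k$) appearing in $\dimirr(G_1)$, and similarly at most $k$ values appearing in $\dimirr(G_2)$. Setting $x = p^{-s}$ is not available here since there is no prime around, so instead I would work directly with the substitution: the equation $\zeta_{G_1}(s) = \zeta_{G_2}(s)$ says $\sum_i a_{d_i} d_i^{-s} = \sum_j b_{e_j} e_j^{-s}$, where the $d_i$ range over $\dimirr(G_1)$ and the $e_j$ over $\dimirr(G_2)$. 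Moving everything to one side, we get a generalized Dirichlet polynomial $\sum_{m \in S} c_m \cdot m^{-s} = 0$, where $S = \dimirr(G_1) \cup \dimirr(G_2)$ and $|S| \leq 2k$, and each coefficient $c_m$ is the (signed) difference of multiplicities. The goal is to conclude all $c_m = 0$, which is exactly condition 2 in Definition \ref{group equiv. def.}, hence $G_1 \sim G_2$.

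The key step is the following elementary fact: a nonzero generalized Dirichlet polynomial $f(s) = \sum_{m \in S} c_m \cdot m^{-s}$ with $|S| = N$ distinct positive real bases and not all $c_m$ zero has at most $N - 1$ complex zeros — in fact I only need that it has finitely many, bounded by something like $N-1$. One clean way to see this: write $m^{-s} = e^{-s \log m}$, so $f(s)$ is an exponential polynomial $\sum_m c_m e^{\lambda_m s}$ with distinct real exponents $\lambda_m = -\log m$; such a function, if not identically zero, has the property that its zero set, when we restrict $s$ to run over any horizontal line or just argue via the classical theory of exponential sums, is discrete, and more precisely the number of zeros counted in the whole plane is governed by the number of terms. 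Actually the crispest route for our purposes: order $S = \{m_1 < \cdots < m_N\}$ and divide through by $m_N^{-s}$ to get $c_{m_N} + \sum_{i < N} c_{m_i} (m_i/m_N)^{s} = 0$; each ratio $m_i/m_N \in (0,1)$, so as $s \to +\infty$ along the reals each term $(m_i/m_N)^s \to 0$, forcing $c_{m_N} = 0$; but we need a bound on the number of zeros, not a limit argument. The count "$\leq N-1$ zeros" follows from the fact that if $f$ had $N$ distinct zeros $s_1, \ldots, s_N$, then the vector $(c_{m})_{m \in S}$ would lie in the kernel of the $N \times N$ matrix $\big(m^{-s_j}\big)_{j,m}$, and this matrix is a generalized Vandermonde matrix, which is nonsingular for distinct positive bases $m$ and distinct $s_j$ — this is a standard generalized-Vandermonde / Wronskian argument. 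Wait — distinct $s_j$ alone doesn't make it Vandermonde; one should instead use that the functions $s \mapsto m^{-s}$ for distinct $m$ are linearly independent over $\C$ as functions, and a nonzero linear combination of $N$ of them that vanishes at $N$ points... the safe classical statement I'd invoke is: \emph{an exponential polynomial with $N$ terms and distinct frequencies has at most $N-1$ real zeros} (and we can restrict attention to real $s$ if we only test real $s$), which follows by a Rolle-type induction on $N$ after dividing out the leading exponential.

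Putting it together: with $|S| \leq 2k$, a nonzero $f(s) = \zeta_{G_1}(s) - \zeta_{G_2}(s)$ has at most $2k - 1 \leq 4k$ real zeros — strictly fewer than the $4k+1$ prescribed values of $s$ at which $f$ vanishes. (We have a comfortable margin: even $2k$ values would suffice; the $4k+1$ in the statement is generous, presumably to absorb the case where the test points are complex rather than real, in which case the bound on zeros is still finite and controlled by the number of terms.) Hence $f \equiv 0$, so every coefficient $c_m$ vanishes, meaning $\dimirr(G_1) = \dimirr(G_2)$ with matching multiplicities, i.e. the multisets of dimensions agree; by Definition \ref{group equiv. def.}(2) this gives $G_1 \sim G_2$. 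The main obstacle is purely the bookkeeping around the zero-counting lemma — choosing the cleanest formulation (Vandermonde nonsingularity versus Rolle induction versus the classical exponential-sum bound) and making sure the counting constant is compatible with the $4k+1$ in the hypothesis; the group theory itself contributes nothing beyond the reinterpretation of $\zeta_G$ as a Dirichlet polynomial with at most $N(G)$ terms.
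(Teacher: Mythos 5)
There is a genuine gap, and it is exactly at the word \quote{similarly}: the hypothesis bounds only $N(G_1)\leq k$ and says nothing about $N(G_2)$, yet you assume that $\dimirr(G_2)$ also has at most $k$ elements in order to get $|S|\leq 2k$. This asymmetry is not an oversight in the statement --- in the paper's application (proof of Theorem \ref{main thm}) one only has a bound on $N(G_1(k,q))$, coming from the jet scheme over $\F_q$, and no a priori bound on $N(G_2(k,q))$. Without a bound on $N(G_2)$ the number of terms of $f=\zeta_{G_1}-\zeta_{G_2}$ is unbounded, so your zero-counting lemma (\quote{an exponential polynomial with $N$ terms has at most $N-1$ real zeros}, via Vandermonde or Rolle induction) yields no contradiction with only $4k+1$ vanishing points. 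The paper's proof avoids this by counting \emph{sign changes} rather than terms: substituting $u=e^{s}$, the function $\zeta_{G_1}(\log u)-\zeta_{G_2}(\log u)$ is a \quote{semi-polynomial} whose positive coefficients occur only at the at most $k$ exponents coming from $\dimirr(G_1)$, while every other nonzero coefficient is negative; hence the sorted coefficient list has at most $2k$ sign switches no matter how large $N(G_2)$ is, and the generalized Descartes bound (Corollary \ref{semi polynomial cor}, proved via the limiting Lemma \ref{calc}, with its factor $2$) gives at most $4k$ distinct positive roots --- which is precisely where the constant $4k+1$ comes from, the \quote{generosity} you remark on. This sign-change idea is the essential ingredient your proposal is missing, and a term-count argument cannot substitute for it.

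A secondary inaccuracy: your parenthetical claim that complex test points are harmless because the number of zeros is \quote{still finite and controlled by the number of terms} is false; already $2^{-s}-3^{-s}$ vanishes on an infinite vertical arithmetic progression in $\C$, and any nonzero Dirichlet polynomial with at least two distinct frequencies has infinitely many complex zeros. Both your argument and the paper's really concern real values of $s$ (equivalently positive $u=e^{s}$), which suffices for the paper since the proposition is applied at the real points $s=2m$. If one did have $N(G_1),N(G_2)\leq k$, your route would correctly conclude from $2k$ real agreement points, so the proposal proves a weaker, symmetric variant; but it does not prove the proposition as stated.
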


For the proof we will need some lemmas:

\begin{lemma}
\label{calc}
Let $I\subseteq \R$ be an open interval, and let $f_n$ be a sequence of $C^1$ real functions that converges point-wise on $I$ to a continuous real function $f$ on $I$. Assume that the set of roots of $f$, denoted by $\roots(f)$, is finite and non-empty. Then we have that \[sup\{d(r,\roots(f_n\cdot f_n'))|r\in \roots(f)\}\overset{n\rightarrow \infty}{\rightarrow}0\] where d is the standard metric on $\R$ and here $d(r,\emptyset):=\infty$. 
\end{lemma}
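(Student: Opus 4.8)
The plan is to reduce the statement to a claim about a single root and then exploit the finiteness of $\roots(f)$. Since $\roots(f)$ is finite and non-empty, it suffices to prove that $d(r,\roots(f_n\cdot f_n'))\to 0$ as $n\to\infty$ for each fixed $r\in\roots(f)$; the supremum in the statement is then a maximum of finitely many sequences each tending to $0$. So fix $r\in\roots(f)$. Because $\roots(f)$ is finite and $I$ is open, there is an $\epsilon_0>0$ such that for every $\epsilon\in(0,\epsilon_0)$ the closed interval $[r-\epsilon,r+\epsilon]$ lies in $I$ and contains no root of $f$ other than $r$; in particular $a:=f(r-\epsilon)$ and $b:=f(r+\epsilon)$ are both non-zero. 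I will show that for each such $\epsilon$ there is an $N$ with $\roots(f_n\cdot f_n')\cap(r-\epsilon,r+\epsilon)\neq\emptyset$ for all $n\ge N$. This yields $\limsup_n d(r,\roots(f_n\cdot f_n'))\le\epsilon$; letting $\epsilon\to 0$ then gives $d(r,\roots(f_n\cdot f_n'))\to 0$. (Note this also shows $\roots(f_n\cdot f_n')$ is eventually non-empty, so the convention $d(r,\emptyset)=\infty$ never interferes.)

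The core of the proof is a case distinction on the signs of $a$ and $b$. If $ab<0$, then by pointwise convergence $f_n(r-\epsilon)\to a$ and $f_n(r+\epsilon)\to b$, so for $n$ large the values $f_n(r-\epsilon)$ and $f_n(r+\epsilon)$ have opposite signs; since $f_n$ is continuous, the intermediate value theorem produces a zero of $f_n$, hence of $f_n\cdot f_n'$, in $(r-\epsilon,r+\epsilon)$. If $ab>0$, assume without loss of generality $a,b>0$ and set $\delta:=\tfrac12\min(a,b)>0$. Using pointwise convergence at the three points $r-\epsilon$, $r$, $r+\epsilon$ (and $f(r)=0$), for $n$ large we have $f_n(r-\epsilon)>\delta$, $f_n(r+\epsilon)>\delta$ and $f_n(r)<\delta$. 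Then the minimum of the continuous function $f_n$ over the compact interval $[r-\epsilon,r+\epsilon]$ is at most $f_n(r)<\delta$, so it cannot be attained at either endpoint and is therefore attained at some interior point $x_n\in(r-\epsilon,r+\epsilon)$; since $f_n$ is $C^1$, $f_n'(x_n)=0$, so $x_n\in\roots(f_n\cdot f_n')\cap(r-\epsilon,r+\epsilon)$. If instead $a,b<0$, one argues symmetrically with the maximum of $f_n$.

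This exhausts all cases, so the claim holds and the lemma follows. The argument is essentially elementary; the only point requiring a little care is the non-sign-changing case $ab>0$, where pointwise convergence alone does not locate a zero of $f_n$ and one must instead find an interior extremum of $f_n$ and invoke the $C^1$ hypothesis — here it is crucial that the endpoint values $f_n(r\pm\epsilon)$ stay bounded away from $0$ with a common sign while $f_n(r)\to 0$, which is precisely what forces the extremum into the open interval.
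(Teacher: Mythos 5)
Your proof is correct, and it follows a genuinely different route from the paper's. The paper argues by contradiction with a subsequence: if some fixed neighborhood $U$ of $r$ misses $\roots(f_n\cdot f_n')$ along a subsequence, then $f_n$ and $f_n'$ have constant signs on $U$, which after the symmetries $x\mapsto -x$ and $f_n\mapsto -f_n$ may be taken positive; monotonicity together with $f_n(r)\to f(r)=0$ then squeezes $f$ to vanish on a whole half-neighborhood of $r$, contradicting the finiteness of $\roots(f)$. You instead argue directly: choosing $\epsilon$ so small that $f(r\pm\epsilon)\neq 0$, either $f$ changes sign across $[r-\epsilon,r+\epsilon]$, in which case pointwise convergence and the intermediate value theorem give a root of $f_n$ in $(r-\epsilon,r+\epsilon)$ for large $n$, or the endpoint values have a common sign and stay bounded away from $0$ while $f_n(r)\to 0$, which forces the extremum of $f_n$ on the compact interval into the interior and hence yields a root of $f_n'$ there. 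Your version buys a few things: it avoids the subsequence extraction and the sign-normalization step, it makes explicit that $\roots(f_n\cdot f_n')$ is eventually non-empty (so the convention $d(r,\emptyset)=\infty$ never matters), and it needs only differentiability of $f_n$ at interior extrema rather than the constant-sign argument for $f_n'$; the paper's contradiction argument is shorter on the page. Both proofs exploit the finiteness of $\roots(f)$ in the same way, namely to isolate $r$ from the other roots of $f$.
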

\begin{proof}
It is enough to prove $d(r,\roots(f_n\cdot f_n'))\overset{n\rightarrow \infty}{\rightarrow}0$ for any root $r$ of $f$. Assume otherwise. Then there is an $\varepsilon$ neighborhood $U$ of $r$ s.t. there exists a subsequence of $f_n$, denoted by $f_{n_k}$, s.t. the sign of $f_{n_k},f_{n_k}'$ in $U$ is constant and independent on $k$ (for some $\varepsilon > 0$). W.L.O.G, this sub-sequence is $f_n$ itself and $f_n,f_n'>0$ for all $n$, on $U$ (because we can switch $f_n(x)\Longleftrightarrow f_n(-x)$ and $f_n(x)\Longleftrightarrow -f_n(x)$ if needed). This implies that for any $x\in U$ with $x<r$ we have $0\leq \lim_{n\to\infty}f_n(x)\leq\lim_{n\to\infty}f_n(r) = f(r) = 0$. This implies that $f(x)=0$ which contradicts the assumption that $\roots(f)$ is finite. This proves the lemma.   
\end{proof}

\begin{cor}
\label{semi polynomial cor}
Define a semi-polynomial to be a real function of the form $\sum_{k=1}^n a_k x^{\alpha_k}$ where $a_k,\alpha_k$ are real numbers with $\forall k\;a_k\neq 0$ and $n$ is a non-negative integer. Let $f$ be a semi-polynomial. Let $A$ be the list of the $a_k$'s, sorted by $\alpha_k$. Then the number of distinct positive roots of $f$ is bounded by the number of sign-switches in $A$ times $2$.
\end{cor}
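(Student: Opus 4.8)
The plan is to reduce the claim about a general semi-polynomial to a statement about functions of a single exponential variable, and then invoke the classical Descartes rule of signs. First I would observe that since we only care about positive roots, we may substitute $x = e^t$ with $t$ ranging over all of $\R$; under this substitution $f(x) = \sum_{k=1}^n a_k x^{\alpha_k}$ becomes $g(t) = \sum_{k=1}^n a_k e^{\alpha_k t}$, a real-analytic function, and the positive roots of $f$ correspond bijectively to the real roots of $g$. So it suffices to bound the number of distinct real roots of $g$ by the number of sign-switches in $A$.

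The key step is an induction on $n$ (the number of terms), using Lemma \ref{calc} together with the Rolle-type idea already present in that lemma. After dividing $g$ by $e^{\alpha_1 t}$ (the smallest exponent, which does not change the roots), we get a function whose first term is the constant $a_1$ and whose remaining terms are $a_k e^{(\alpha_k - \alpha_1)t}$ with strictly positive exponents. If $g$ had $N$ distinct real roots, then between consecutive roots $g$ changes sign, so $g'$ has at least $N-1$ distinct roots by the intermediate value theorem applied to a $C^1$ function. Differentiating $g/e^{\alpha_1 t}$ kills the constant term and leaves a semi-polynomial in $e^t$ with $n-1$ terms, whose sorted coefficient list $A'$ is obtained from $A$ by deleting $a_1$ and multiplying the surviving $a_k$ by the positive scalars $(\alpha_k - \alpha_1)$ — hence $A'$ has exactly the same sign-switch count as $A$ minus its leading entry, i.e. the number of sign-switches in $A'$ is at most the number in $A$. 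By the induction hypothesis $g'/e^{\alpha_1 t}$ — equivalently $g'$ — has at most $(\text{sign-switches in } A')$ real roots, so $N - 1 \le \text{sign-switches in }A'$, and a short bookkeeping argument (splitting into the cases where $a_1$ agrees or disagrees in sign with the first surviving coefficient) gives $N \le \text{sign-switches in } A$. This actually yields the sharper bound without the factor of $2$; the factor of $2$ in the statement gives plenty of slack, so I would be content to prove the cruder inequality if the bookkeeping in the sign-switch count proves delicate.

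The role of Lemma \ref{calc} is to make the Rolle/IVT step fully rigorous in the form actually needed downstream: it guarantees that the root-counting behaves well under the point-wise limits of $C^1$ functions that will arise when $f$ is replaced by the partial sums or rescalings of the zeta function. Concretely, I would phrase the inductive step so that "between two roots of $f$ there is a root of $f \cdot f'$" is exactly the content extracted from the lemma, which is why the lemma is stated with $\roots(f_n \cdot f_n')$ rather than $\roots(f_n')$.

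The main obstacle I anticipate is purely combinatorial rather than analytic: controlling how the sign-switch count of the coefficient list transforms under "delete the leading term and rescale the rest by positive constants." The rescaling is harmless since the scalars $\alpha_k - \alpha_1$ are all positive, but one must be careful that deleting the leading coefficient can only decrease, never increase, the number of sign-switches, and then track whether the reintroduction of $a_1$ when passing from $g'$ back to $g$ adds at most one sign-switch and at most one root. Handling the degenerate cases ($n = 0$ or $n = 1$, repeated or vanishing exponents after the substitution, the possibility $\roots(f) = \emptyset$) is routine and I would dispatch them first.
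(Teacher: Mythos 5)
Your route --- substituting $x=e^{t}$ and running a Rolle-type induction on exponential sums $\sum_k a_k e^{\alpha_k t}$ --- is a legitimate and genuinely different strategy from the paper's (it is essentially Laguerre's proof of the Descartes rule for exponential sums, and if completed it gives the bound \emph{without} the factor $2$). But the step you defer to ``a short bookkeeping argument'' is exactly where the proof is incomplete, and your stated fallback does not repair it. Write $V(A)$ for the number of sign switches, and note that the Rolle step must be applied to $h(t)=e^{-\alpha_1 t}g(t)$, whose derivative is \emph{not} $g'$ (your ``equivalently $g'$'' is a conflation, though harmless). Deleting $a_1$ and rescaling by the positive numbers $\alpha_k-\alpha_1$ gives $V(A')=V(A)-1$ when $a_1$ and $a_2$ have opposite signs, but $V(A')=V(A)$ when they agree; in the same-sign case Rolle plus the inductive hypothesis only yields $N\le V(A')+1=V(A)+1$, and no sign-switch bookkeeping closes this gap. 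You need an extra analytic input: for instance, $h(t)\to a_1\neq 0$ as $t\to-\infty$ while $h'$ has the sign of $a_2$ near $-\infty$, which forces an additional root of $h'$ to the left of the smallest root of $h$; or, cleaner, Laguerre's trick of dividing by $e^{\beta t}$ with $\beta$ chosen strictly between the two exponents at which a sign switch occurs, so that the switch count drops by exactly one and one inducts on $V$ rather than on $n$. Your proposed retreat to the cruder inequality (``the factor $2$ gives slack'') is not actually available: the inductive step consumes the sharp bound $Z(h')\le V(A')$ at level $n-1$, and the weaker statements $N\le V+1$ or $N\le 2V$ are not inductively stable --- in the same-sign case they only return $N\le V(A)+2$ or $N\le 2V(A)+1$. (The case $V=0$ is trivial, but that does not rescue the induction.)

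Two further remarks. First, your reading of Lemma \ref{calc} does not match its role: it is not needed to justify Rolle between two roots of a smooth exponential sum (Rolle needs no sign change and no limiting lemma), and it has nothing to do with the zeta function at this point. In the paper the corollary is proved quite differently: classical Descartes' rule for polynomials, a power substitution to handle rational exponents, and then approximation of real exponents by rational ones, with Lemma \ref{calc} guaranteeing that each positive root of $f$ is a limit of roots of $f_m\cdot f_m'$; since $f_m$ and $f_m'$ have the same coefficient sign pattern once all exponents exceed $1$, each contributes at most $V$ roots --- that is precisely where the factor $2$ comes from. Second, once you fix the same-sign case as above, your argument is correct and strictly sharper than the paper's; as written, however, the decisive case is missing.
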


\begin{proof}
Firstly, by Descartes' rule of signs, we have the corollary in the case where $f$ is a polynomial (without the \quote{times $2$}). By changing variables: $u=x^n$ for some positive integer $n$, we have the corollary in the case where all the $x$-powers in $f$ are rational (again, without the \quote{times $2$}). Now, for the general case, by multiplying by some power of $x$, we can assume W.L.O.G that all the $x$-powers are bigger than $1$. Now the claim easily follows from the rational $x$-powers case together with the previous lemma (for a large enough interval containing all positive roots of $f$ and bounded from above and below by positive numbers. It is clear that $\roots(f)$ is finite so we can find such interval). 
\end{proof}

We are now ready to prove Proposition \ref{equal zeta}:

\begin{proof} [Proof of Proposition \ref{equal zeta}]
Notice that if $G$ is a finite group then $\zeta_G(log(u))$ is a semi-polynomial in $u$. Now the proposition follows from Corollary \ref{semi polynomial cor} for $f=\zeta_{G_1}(log(u))-\zeta_{G_2}(log(u))$.
\end{proof}

\section{Representation theory and Clifford theory}
\label{sec: clifford}
In this section we will present some lemmas from representation theory and Clifford theory that would help us later:

\begin{thm}[Frobenius, {\cite[Lemma 3.1(ii)]{Frob}}]
\label{frob}
Let $G$ be a finite group and let $n$ be a positive integer. Then the number of solutions to the equation: $[x_1,y_1]...[x_n,y_n]=1$ in $G$ is equal to: \[|G|^{2n-1}\sum_{\pi\in\irrG}{\frac{1}{(\dim\pi)^{2n-2}}} = |G|^{2n-1}\zeta_G(2n-2).\]
\end{thm}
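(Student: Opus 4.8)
The plan is to prove this by the classical Frobenius character‑counting argument, packaging the count into the group algebra $\C[G]$. First I would set $\mathbf{N} := \sum_{x,y\in G}[x,y]\in\C[G]$, so that the coefficient of $g$ in $\mathbf{N}$ is exactly the number of pairs $(x,y)$ with $[x,y]=g$. The first observation is that $\mathbf{N}$ is central in $\C[G]$: conjugating by any $z\in G$ and reindexing $x\mapsto zxz^{-1}$, $y\mapsto zyz^{-1}$ leaves the sum unchanged, since $z[x,y]z^{-1}=[zxz^{-1},zyz^{-1}]$. Consequently, for each irreducible representation $(\rho_\chi,V)$ with character $\chi$, Schur's lemma shows that $\rho_\chi(\mathbf{N})$ acts as a scalar $\lambda_\chi$ on $V$.

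Next I would compute $\lambda_\chi$. Taking traces, $\chi(1)\lambda_\chi = \sum_{x,y}\chi(xyx^{-1}y^{-1})$. Here I use the standard averaging identity $\sum_{x\in G}\rho_\chi(xax^{-1}) = \tfrac{|G|}{\chi(1)}\chi(a)\,\mathrm{Id}_V$, itself obtained by applying Schur to the $G$‑equivariant operator $\tfrac{1}{|G|}\sum_x\rho_\chi(xax^{-1})$ and reading off its trace; this gives $\sum_x\chi(xax^{-1}b) = \tfrac{|G|}{\chi(1)}\chi(a)\chi(b)$. Applying this with $a=y$, $b=y^{-1}$, then summing over $y$ and using $\chi(y^{-1})=\overline{\chi(y)}$ together with the first orthogonality relation $\sum_{y}|\chi(y)|^2=|G|$, I get $\chi(1)\lambda_\chi = \tfrac{|G|^2}{\chi(1)}$, hence $\lambda_\chi = |G|^2/\chi(1)^2$.

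Then I would relate the $n$‑fold problem to powers of $\mathbf{N}$. The number of $2n$‑tuples with $\prod_{i=1}^n[x_i,y_i]=g$ is precisely the coefficient of $g$ in $\mathbf{N}^n\in\C[G]$, since expanding $\mathbf{N}^n$ records all factorizations $g=g_1\cdots g_n$ weighted by the number of commutator representations of each $g_i$. As $\mathbf{N}$ acts on the $\chi$‑isotypic block by the scalar $\lambda_\chi$, $\mathbf{N}^n$ acts there by $\lambda_\chi^n = |G|^{2n}/\chi(1)^{2n}$. Finally, extract the coefficient of the identity via the regular representation: for $a\in\C[G]$ the coefficient of $1$ equals $\tfrac{1}{|G|}\mathrm{tr}_{\mathrm{reg}}(a) = \tfrac{1}{|G|}\sum_{\chi}\chi(1)\,\mathrm{tr}(\rho_\chi(a))$, because $\mathrm{tr}_{\mathrm{reg}}(g)=|G|\delta_{g,1}$. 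Plugging in $a=\mathbf{N}^n$ gives the count $\tfrac{1}{|G|}\sum_\chi \chi(1)\cdot\chi(1)\lambda_\chi^n = \tfrac{1}{|G|}\sum_\chi \chi(1)^2\,|G|^{2n}/\chi(1)^{2n} = |G|^{2n-1}\sum_{\chi}\chi(1)^{-(2n-2)} = |G|^{2n-1}\zeta_G(2n-2)$, which is exactly the claimed formula with $\chi=\dim\pi$ ranging over $\irr(G)$.

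The argument is entirely standard character theory, so there is no deep obstacle; the only points needing care are (i) checking that $\mathbf{N}$ is genuinely central so that Schur's lemma applies, and (ii) the two successive applications of the averaging identity with the correct normalizations of the orthogonality relation, which must be tracked so that one lands on $\lambda_\chi=|G|^2/\chi(1)^2$ rather than an off‑by‑a‑factor variant. As a sanity check, for $n=1$ the formula counts commuting pairs as $|G|\cdot|\irr(G)|$, which is correct.
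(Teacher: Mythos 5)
Your argument is correct: the centrality of $\mathbf{N}=\sum_{x,y}[x,y]$, the scalar $\lambda_\chi=|G|^2/\chi(1)^2$ via the averaging identity, and extracting the coefficient of the identity in $\mathbf{N}^n$ through the regular representation together give exactly $|G|^{2n-1}\zeta_G(2n-2)$ (only the final phrase should read $\chi(1)=\dim\pi$, a typo rather than a gap). The paper does not prove this statement but cites it from the literature, and your proof is precisely the classical Frobenius character-counting argument behind that citation, so there is no substantive difference in approach.
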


We now want to use the following simple lemma:
\begin{lemma}
\label{Sigma bound}
For a set $A$ of integers and a positive integer $M$, denote: $\Sigma(A,M):=\{n_1+..+n_r\;|\;\forall i\;n_i\in A,\;r\leq M\}$. Then, for any $A,M$ as before, we have: \[|\Sigma(A,M)|\leq M\cdot|A|^M.\]
\end{lemma}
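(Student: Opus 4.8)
The plan is to prove this by a crude counting argument, treating each admissible length $r$ separately. For each integer $r$ with $1 \le r \le M$, set $\Sigma_r(A) := \{n_1 + \dots + n_r \mid \forall i\; n_i \in A\}$, so that $\Sigma(A,M) = \bigcup_{r=1}^{M} \Sigma_r(A)$. The set $\Sigma_r(A)$ is precisely the image of the summation map $A^r \to \Z$, $(n_1,\dots,n_r) \mapsto n_1 + \dots + n_r$, and therefore $|\Sigma_r(A)| \le |A|^r$.

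Next I would observe that if $A = \emptyset$ then $\Sigma(A,M) = \emptyset$ and the inequality holds trivially, while if $|A| \ge 1$ then $|A|^r \le |A|^M$ for every $r$ with $1 \le r \le M$. Combining these bounds gives
\[
|\Sigma(A,M)| \;\le\; \sum_{r=1}^{M} |\Sigma_r(A)| \;\le\; \sum_{r=1}^{M} |A|^r \;\le\; \sum_{r=1}^{M} |A|^M \;=\; M \cdot |A|^M,
\]
which is the desired conclusion.

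I do not expect any genuine obstacle here: the statement only asks for a rough upper bound, so there is no need to account for coincidences among the sums (which would only improve the estimate) nor to worry about the unordered/multiset structure of the summands. The one point to keep straight is the range of $r$ — the definition allows $r$ up to $M$, giving $M$ summands in the final estimate — and the degenerate case $A = \emptyset$, both of which are handled in passing as above.
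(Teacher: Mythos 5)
Your argument is correct: splitting by the number of summands $r$, bounding each $\Sigma_r(A)$ by the size of the domain $A^r$ of the summation map, and summing over $1 \le r \le M$ gives exactly the claimed bound, and your handling of the empty-set case and of the range of $r$ is fine. The paper itself states this lemma without proof (calling it a simple lemma), and your argument is precisely the natural one the authors evidently had in mind.
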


\begin{lemma}
\label{N(H)~<N(G)}
Let $G,H$ be finite groups and $\phi: G \rightarrow H$ be a homomophism of abstract groups. Then we have: $\dimirr(H) \subseteq \Sigma(\dimirr(G),[H:\phi(G)])$ and $N(H) \leq N(G)^{[H:\phi(G)]}\cdot [H:\phi(G)]$, where $\Sigma$ is from the previous lemma.
\end{lemma}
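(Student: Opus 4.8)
The second inequality is immediate from the first: once we know that $\dimirr(H)\subseteq\Sigma(\dimirr(G),m)$ with $m:=[H:\phi(G)]$, Lemma~\ref{Sigma bound} gives $N(H)=|\dimirr(H)|\le|\Sigma(\dimirr(G),m)|\le m\cdot N(G)^m$. So the plan is to prove the inclusion of the dimension sets.

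Write $K:=\phi(G)\le H$, so that $[H:K]=m$. The first move is to reduce the problem from $G$ to $K$: since $K$ is a homomorphic image of $G$, every irreducible representation of $K$ inflates along $\phi$ to an irreducible representation of $G$ of equal dimension, whence $\dimirr(K)\subseteq\dimirr(G)$, and it suffices to show $\dimirr(H)\subseteq\Sigma(\dimirr(K),m)$. Now fix $\rho\in\irr(H)$ and decompose the restriction $\rho\downarrow_K=\bigoplus_i a_i\sigma_i$ into pairwise non-isomorphic irreducibles $\sigma_i$ of $K$ with multiplicities $a_i\ge 1$. Then $\dim\rho=\sum_i a_i\dim\sigma_i$ is, by construction, a sum of $r:=\sum_i a_i$ elements of $\dimirr(K)$ (each value $\dim\sigma_i$ repeated $a_i$ times), so it remains only to check $r\le m$.

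For this, Frobenius reciprocity gives $a_i=\langle\rho\downarrow_K,\sigma_i\rangle_K=\langle\rho,\sigma_i\uparrow^H\rangle_H$, and since $\rho$ is irreducible it occurs in any representation $V$ with multiplicity at most $\dim V/\dim\rho$; applying this to $V=\sigma_i\uparrow^H$, which has dimension $m\dim\sigma_i$, yields $a_i\le m\dim\sigma_i/\dim\rho$. Summing over $i$ and using $\sum_i\dim\sigma_i\le\sum_i a_i\dim\sigma_i=\dim(\rho\downarrow_K)=\dim\rho$, we obtain $r=\sum_i a_i\le\frac{m}{\dim\rho}\sum_i\dim\sigma_i\le m$. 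Hence $\dim\rho\in\Sigma(\dimirr(K),m)\subseteq\Sigma(\dimirr(G),m)$, which completes the proof.

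There is no serious obstacle in this argument: it is a short computation with restriction, induction, and Frobenius reciprocity. The two points that require a little care are the reduction $\dimirr(K)\subseteq\dimirr(G)$ via inflation, and the observation that what must be bounded by the index $m$ is the total multiplicity $\sum_i a_i$ of constituents of $\rho\downarrow_K$ (not merely the number of distinct ones), and that this total multiplicity is exactly the number of summands needed in the definition of $\Sigma$.
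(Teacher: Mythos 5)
Your proof is correct, and it reaches the same structural conclusion as the paper --- reduce via inflation to the case of the subgroup $K=\phi(G)\le H$, then show that $\rho\downarrow_K$ has at most $[H:K]$ irreducible summands counted with multiplicity --- but the key estimate is obtained by a different mechanism. The paper bounds $\langle\rho\downarrow_K,\rho\downarrow_K\rangle_K=\sum_i a_i^2$ directly by a character computation: since $\chi\bar\chi\ge 0$, the average of $|\chi|^2$ over $K$ is at most $[H:K]$ times the average over $H$, giving $\sum_i a_i^2\le[H:K]$ and hence $\sum_i a_i\le[H:K]$. You instead bound each multiplicity via Frobenius reciprocity, $a_i=\langle\rho,\sigma_i\uparrow^H\rangle_H\le\dim(\sigma_i\uparrow^H)/\dim\rho=[H:K]\dim\sigma_i/\dim\rho$, and sum, using $\sum_i\dim\sigma_i\le\dim\rho$, to get $\sum_i a_i\le[H:K]$. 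Both are one-paragraph arguments from standard character theory; the paper's positivity trick yields the slightly stronger bound on $\sum_i a_i^2$ (not needed here), while your route trades that for a more formal computation with induction and dimensions. You are also right to flag that the quantity to control is the total multiplicity $\sum_i a_i$, which is exactly the number of summands entering $\Sigma(\dimirr(G),[H:\phi(G)])$; the final step $N(H)\le[H:\phi(G)]\cdot N(G)^{[H:\phi(G)]}$ via Lemma \ref{Sigma bound} is the same as in the paper.
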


\begin{proof}
Since $G/\ker\phi \iso \Im\phi$ and since any irreducible representation of $G/\ker\phi$ gives rise to an irreducible representation of $G$ with the same dimension, we have: $\dimirr(G) \supseteq \dimirr(\phi(G))$. So we may assume that $G \leq H$ and that $\phi$ is the identity map. Now, for any irreducible character (not necessarily one-dimensional) $\chi$ of $H$, since $\chi\Bar{\chi}\geq0$ we have: $$\topn\chi\downarrow_G,\chi\downarrow_G\tc_G = \frac{1}{|G|}\sum_{g\in G}\chi(g)\Bar{\chi}(g) \leq \frac{|H|}{|G|}\frac{1}{|H|}\sum_{h \in H}\chi(h)\Bar{\chi(h)} = \frac{|H|}{|G|}\topn\chi,\chi\tc_H = [H:G]. $$ That means that $\chi\downarrow_G$ decomposes to at most $[H:G]$ irreducible characters. Now the first part of the claim readily follows and the second part follows from the previous lemma.
\end{proof}

\begin{defn}
Let $G$ be a finite group and let $H$ be a normal subgroup of G. Let $\chi$ be an irreducible character of $H$. The inertia subgroup of $\chi$ is defined to be: $$T(\chi):=\{g\in G\;s.t.\;\chi^g=\chi \}.$$
\end{defn}

\begin{thm}[Fong's first reduction, {\cite[Proposition 3.13.2]{fong}}]
\label{fong's first reduction}
Let $G$ be a finite group and let $H$ be a normal subgroup of $G$. Let $W$ be an irreducible representation of $G$ and let $\chi$ be the character of $W$. If $$W\uparrow^{T(\chi)} = V_1\oplus...\oplus V_n$$ is the decomposition of $W\uparrow^{T(\chi)}$ to irreducible representations, then for each $V_i$ we have that $V_i\uparrow^G$ is an irreducible representation of $G$ and $V_i\uparrow^G \iso V_j\uparrow^G$ iff $V_i \iso V_j$.
\end{thm}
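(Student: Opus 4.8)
The statement is the Clifford correspondence for the normal subgroup $H$ of $G$, and the plan is to prove it by combining Clifford's theorem with Mackey's intertwining-number formula. (Here $W$ is to be read as an irreducible representation of $H$ with character $\chi$, so that $W\uparrow^{T(\chi)}$ makes sense.) First I would fix notation: put $T := T(\chi)$; since $H\trianglelefteq G$, every conjugate $\chi^g$ is again a character of $H$, so $T$ is a subgroup of $G$ with $H\le T$. By Frobenius reciprocity, $\langle W\uparrow^T, V_i\rangle_T = \langle W, V_i\downarrow_H\rangle_H$, so every constituent $V_i$ of $W\uparrow^T$ lies over $\chi$, i.e.\ $\chi$ occurs in $V_i\downarrow_H$. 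Then I would apply Clifford's theorem to $H\trianglelefteq T$: the irreducible constituents of $V_i\downarrow_H$ form a single $T$-orbit of characters of $H$, but by definition every element of $T$ fixes $\chi$, so in fact $V_i\downarrow_H$ is $\chi$-isotypic.

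The heart of the argument is Mackey's formula for the intertwining number of two of the inductions, which I would write as
\[
\langle V_i\uparrow^G,\, V_j\uparrow^G\rangle_G \;=\; \sum_{g\,\in\, T\backslash G/T}\big\langle\, V_i\downarrow_{T\cap{}^gT},\; ({}^gV_j)\downarrow_{T\cap{}^gT}\,\big\rangle .
\]
For $g\notin T$ one has ${}^g\chi\ne\chi$, so $V_i\downarrow_H$ (which is $\chi$-isotypic) and $({}^gV_j)\downarrow_H$ (which is ${}^g\chi$-isotypic) share no irreducible constituent; since $H\le T\cap{}^gT$, and since any common constituent of two modules restricts to a common constituent of their restrictions, $V_i\downarrow_{T\cap{}^gT}$ and $({}^gV_j)\downarrow_{T\cap{}^gT}$ are disjoint as well, and that term vanishes. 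The double coset of $g=1$ is $T$ itself and contributes $\langle V_i, V_j\rangle_T$, so altogether
\[
\langle V_i\uparrow^G,\, V_j\uparrow^G\rangle_G \;=\; \langle V_i,\, V_j\rangle_T .
\]

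From this identity both assertions drop out. Taking $i=j$ gives $\langle V_i\uparrow^G, V_i\uparrow^G\rangle_G = \langle V_i, V_i\rangle_T = 1$, so $V_i\uparrow^G$ is irreducible. Since all the $V_i\uparrow^G$ are then irreducible, $V_i\uparrow^G \cong V_j\uparrow^G$ holds if and only if $\langle V_i\uparrow^G, V_j\uparrow^G\rangle_G \ne 0$, which by the identity is equivalent to $\langle V_i, V_j\rangle_T \ne 0$, that is, to $V_i \cong V_j$.

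The main obstacle is not conceptual but organisational: one must set up Mackey's decomposition with a legitimate system of $(T,T)$-double-coset representatives and verify that $H\le T\cap{}^gT$ for every $g$ (this is precisely where normality of $H$ is used), so that the step ``disjointness is inherited under restriction'' is valid. The one other point that genuinely uses the hypotheses is the Clifford step, where the defining property of the inertia group is what lets one pass from ``$\chi$ occurs in $V_i\downarrow_H$'' to ``$V_i\downarrow_H$ is $\chi$-isotypic''; the rest is bookkeeping.
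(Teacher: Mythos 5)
Your proof is correct, and it is the standard argument for the Clifford correspondence: every constituent $V_i$ of $W\uparrow^{T(\chi)}$ lies over $\chi$ and is $\chi$-isotypic on $H$ by Clifford's theorem, the Mackey intertwining-number formula kills every double coset outside $T(\chi)$ because $H\leq T(\chi)\cap {}^{g}T(\chi)$ and ${}^{g}\chi\neq\chi$ there, and the resulting identity $\langle V_i\uparrow^G,V_j\uparrow^G\rangle_G=\langle V_i,V_j\rangle_{T(\chi)}$ yields both irreducibility and the isomorphism criterion. The paper does not prove this statement but cites it (Fong's first reduction), so there is no in-paper argument to compare against; note also that you correctly read the statement's $W$ as an irreducible representation of the normal subgroup $H$ with character $\chi$ (as the induction $W\uparrow^{T(\chi)}$ requires), which is how the cited result is meant.
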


Let $\mu_{p^\infty}$ be the group of roots of unity in $\C$ with an order that is a power of $p$. 
\begin{lemma}[{\cite[Lemma 9.0.2]{BM}}]
\label{H^2 is good}
Let $G$ be a finite group, $H \subseteq G$ be a normal $p$-subgroup, and $\rho \in \irr(G)$. Assume that $\rho\downarrow_H$ is isotypic and that $H^2(G/H,\mu_{p^\infty})$ is trivial (when we think of $\mu_{p^\infty}$ as a trivial $G/H$ module). Then there exist irreducible representations $\pi_1, \pi_2 \in \irr(G)$ such that $\pi_1\downarrow_H$ is irreducible, $H\subseteq \ker(\pi_2)$ and $\rho \iso \pi_1 \otimes \pi_2$.
\end{lemma}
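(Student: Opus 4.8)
The plan is to apply the standard Clifford theory of the normal subgroup $H$ relative to the irreducible character of $H$ lying beneath $\rho$, and to show that the cohomological obstruction to extending that character is annihilated by a power of $p$, so that the hypothesis $H^2(G/H,\mu_{p^\infty})=0$ kills it. First I would record the setup: since $\rho\downarrow_H$ is isotypic, $\rho\downarrow_H = e\,\theta$ for some $\theta\in\irr(H)$; as $\rho$ is irreducible it is $G$-invariant, hence so is $\rho\downarrow_H$, hence $\theta$ is $G$-invariant. By the usual theory of the projective representation attached to a $G$-invariant character of a normal subgroup (the character-triple formalism of Isaacs, \emph{Character Theory of Finite Groups}, Ch.~11), there is a projective representation $P$ of $G$ with a factor set $\alpha$ inflated from $G/H$ and satisfying $\alpha(g_1,g_2)=1$ whenever $g_1\in H$ or $g_2\in H$, such that $P\downarrow_H=\theta$ is honest on $H$; the class $[\alpha]\in H^2(G/H,\C^\times)$ (trivial module) is the obstruction, and it vanishes exactly when $\theta$ extends to an honest $\pi_1\in\irr(G)$ with $\pi_1\downarrow_H=\theta$.

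The key step is to show that $[\alpha]$ has $p$-power order. Taking determinants in $P(g_1)P(g_2)=\alpha(g_1,g_2)P(g_1g_2)$ shows $g\mapsto\det P(g)$ is a one-dimensional projective representation of $G$ with factor set $\alpha^{\theta(1)}$. Letting $o(\theta)$ be the order of the linear character $\det\theta\in\Hom(H,\C^\times)$ and $\delta(g):=(\det P(g))^{o(\theta)}$, one has $\delta\equiv 1$ on $H$, and (using $\alpha(g,h)=1$ for $h\in H$, which forces $P(gh)=P(g)\theta(h)$) the function $\delta$ descends to a function $\bar\delta$ on $G/H$ whose coboundary is $\bar\alpha^{\,\theta(1)o(\theta)}$. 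Hence $[\alpha]^{\theta(1)o(\theta)}=1$ in $H^2(G/H,\C^\times)$. Since $\theta(1)$ and $o(\theta)$ both divide $|H|$, which is a power of $p$, the order of $[\alpha]$ is a power of $p$.

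To finish, I would use that, for a finite group $Q$, $\C^\times$ as a trivial module splits as $\mu_{p^\infty}$ plus a module with no higher cohomology (the prime-to-$p$ roots of unity together with a $\Q$-vector space), so the $p$-primary part of $H^2(Q,\C^\times)$ is $H^2(Q,\mu_{p^\infty})$. With $Q=G/H$ this is $0$ by hypothesis, so the $p$-power-order class $[\alpha]$ vanishes. Then $\theta$ extends to $\pi_1\in\irr(G)$ with $\pi_1\downarrow_H=\theta$ irreducible. Since $\rho\downarrow_H=e\,\theta$, the representation $\rho$ lies over $\theta$, so Gallagher's theorem (Isaacs, Cor.~6.17) gives $\rho\cong\pi_1\otimes\pi_2$, where $\pi_2$ is inflated to $G$ from some irreducible representation of $G/H$; in particular $H\subseteq\ker(\pi_2)$, as required.

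The main obstacle is the $p$-torsion step. The naive determinant computation gives $[\alpha]^{\theta(1)}=1$ only in $H^2(G,\C^\times)$, not in $H^2(G/H,\C^\times)$, because $\det P$ is not inflated from $G/H$ (its restriction to $H$ is $\det\theta$, not trivial). One repairs this by correcting with the power $o(\theta)$, which trivializes $\det P$ on $H$ and lets the identity descend to $G/H$; equivalently one can invoke the five-term inflation--restriction exact sequence and note that $H^1(H,\C^\times)=\Hom(H,\C^\times)$ is a $p$-group because $H$ is. Everything else is routine translation through the projective-representation dictionary together with an application of Gallagher's theorem.
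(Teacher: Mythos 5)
The paper does not prove this lemma at all; it is imported verbatim from \cite[Lemma 9.0.2]{BM}, so there is no internal argument to compare against, only the cited source. Your proof is correct and is the standard Clifford-theoretic argument: passing to the $G$-invariant character $\theta$ under $\rho$, reducing to the vanishing of the obstruction class $[\alpha]\in H^2(G/H,\C^\times)$ for extending $\theta$, the determinant trick---correctly repaired by raising to the power $o(\theta)$ so that $\delta=(\det P)^{o(\theta)}$ is trivial on $H$ and descends to $G/H$, giving $[\alpha]^{\theta(1)\,o(\theta)}=1$ with both exponents $p$-powers since $H$ is a $p$-group---the identification of the $p$-primary part of $H^2(G/H,\C^\times)$ with $H^2(G/H,\mu_{p^\infty})$ (the divisible torsion-free part having no higher cohomology), and the concluding use of Gallagher's theorem to write $\rho\iso\pi_1\otimes\pi_2$ with $\pi_1\downarrow_H=\theta$ irreducible and $H\subseteq\ker(\pi_2)$, are all sound.
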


\begin{thm}[{\cite[Theorem 3.10]{finiteOrderElement}}]
\label{finite order in GLn(Z)}
Let $f(n)$ denote the maximal finite order of elements in $\GL_n(\Z)$. Then: 
$$\lim_{n\to \infty} \frac{\log(f(n))}{\sqrt{n\log n}} = 1.$$ In particular, $f(n)$ is finite for all $n$.
\end{thm}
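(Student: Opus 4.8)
The plan is to recast the statement as a purely number-theoretic extremal problem and then resolve it with the prime number theorem; this is the classical circle of ideas around Landau's function (the maximal order of an element of $S_n$). First I would pass from matrices to cyclotomic data. A finite-order $A\in\GL_n(\Q)$ of order $m$ makes $\Q^n$ a module over $\Q[t]/(t^m-1)$ with $t$ acting by $A$; since $t^m-1=\prod_{d\mid m}\Phi_d(t)$ is a product of distinct irreducibles over $\Q$, this ring is $\prod_{d\mid m}\Q(\zeta_d)$ with $\zeta_d$ a primitive $d$-th root of unity, so $\Q^n\cong\bigoplus_{d\mid m}\Q(\zeta_d)^{n_d}$ as a $\Q[t]$-module. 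Comparing dimensions gives $\sum_d n_d\varphi(d)=n$, and as $A$ acts on each summand $\Q(\zeta_d)$ with order exactly $d$ we get $\mathrm{ord}(A)=\mathrm{lcm}\{d:n_d>0\}$. Conversely any such configuration is realized by the block-diagonal matrix of companion matrices of the relevant $\Phi_d$; these have integer entries and determinant $\pm 1$, so the element already lies in $\GL_n(\Z)$. Hence $\GL_n(\Z)$ and $\GL_n(\Q)$ have the same maximal finite element order, and
\[
f(n)=\max\Bigl\{\mathrm{lcm}(d_1,\dots,d_r)\ \Big|\ r\ge 0,\ d_i\ge 1,\ \textstyle\sum_i\varphi(d_i)\le n\Bigr\},
\]
which, since only finitely many $d$ satisfy $\varphi(d)\le n$, is a finite maximum; in particular $f(n)<\infty$.

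Next I would reduce to primes. Because $\mathrm{lcm}$ depends only on the prime-power parts of the $d_i$ and $\varphi$ is multiplicative, one may replace the $d_i$ by the pairwise-coprime prime powers $p^{e_p}$ occurring in their lcm; a short check (the only way clustering several prime factors into one $d_i$ can save budget is a bounded phenomenon, e.g. $\{2,3\}\rightsquigarrow 6$) shows this alters $\sum_i\varphi(d_i)$ by at most an additive constant. A further efficiency estimate — the ``value per unit of budget'' $a\log p/(p^{a-1}(p-1))$ of a prime power $p^a$ decays sharply in $a$ — shows that exponents $a\ge 2$ together contribute only $o(\sqrt{n\log n})$ to $\log f(n)$. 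Thus
\[
\log f(n)=(1+o(1))\,\max\Bigl\{\textstyle\sum_i\log p_i\ \Big|\ p_i\text{ distinct primes},\ \sum_i(p_i-1)\le n\Bigr\},
\]
and since the rate $\log p/(p-1)$ decreases in $p$, a greedy/rearrangement argument shows the optimal set of primes is an initial segment $\{p:p\le y\}$.

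Finally I would feed in the prime number theorem. For the threshold $y$ the constraint reads $\sum_{p\le y}(p-1)\le n$; since $\sum_{p\le y}p\sim y^2/(2\log y)$ and $\pi(y)=o(y^2/\log y)$, the largest admissible $y$ satisfies $y^2/(2\log y)\sim n$, and solving (using $\log y\sim\tfrac12\log n$) gives $y\sim\sqrt{n\log n}$. Then
\[
\log f(n)\ \ge\ \sum_{p\le y}\log p=\vartheta(y)\sim y\sim\sqrt{n\log n},
\]
while the matching upper bound follows by applying the same two estimates to the extremal configuration found above. Combining, $\log f(n)\sim\sqrt{n\log n}$, which is the assertion.

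The reductions to cyclotomic data and to prime powers are essentially formal; the real work lies in the extremal analysis and the analytic input. The main obstacle is to verify carefully that prime powers with large exponent, and the gap between $\varphi(p^a)$ and $p^a$, are genuinely lower-order contributions, and then to invoke the prime number theorem in the forms $\vartheta(y)\sim y$ and $\sum_{p\le y}p\sim y^2/(2\log y)$ with enough uniformity to pin down the sharp constant $1$ rather than merely $\log f(n)\asymp\sqrt{n\log n}$ — which is exactly the content of Landau's theorem (and its refinements) transported from $S_n$ to $\GL_n(\Z)$.
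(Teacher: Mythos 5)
This theorem is not proved in the paper at all --- it is imported verbatim from the cited reference --- so there is no internal proof to compare against; your sketch follows the standard route for this result (reduce finite-order elements of $\GL_n(\Z)$ to block sums of companion matrices of cyclotomic polynomials, so that $f(n)$ becomes the Landau-type extremal problem $\max\{\operatorname{lcm}(d_i):\sum_i\varphi(d_i)\le n\}$, then reduce to distinct primes and apply the prime number theorem via $\vartheta(y)\sim y$ and $\sum_{p\le y}p\sim y^2/(2\log y)$), and that route is correct. The points you flag as needing care are the right ones and do go through: the only loss in splitting a $d_i$ into coprime prime powers comes from the factor $2$ (since $\varphi(p^a)\ge 2$ for $p^a\ge 3$ and a product of numbers $\ge 2$ dominates their sum), costing at most $\log 2$ in $\log f(n)$, and the prime powers with exponent $a\ge 2$ involve only primes $p\le\sqrt{2n}$ with $p^a\le 2n$, hence contribute $O(\sqrt n)=o(\sqrt{n\log n})$; with these observations your outline is essentially the classical proof of the cited asymptotic, including the finiteness claim.
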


\section{Algebraic Geometry}
\label{sec: algebraic geometry}
In this section, we will state some claims from algebraic geometry that will help us later. Recall that in our notations, when we say \quote{affine variety}, we mean a zero set of some ideal of polynomials in $\A^n$ (see Subsection \ref{subsec: Preliminaries}).

\subsection{Complexity}
In this subsection, we introduce the notion of complexity of affine algebraic varieties and present some lemmas. 

\begin{defn}[complexity]
\label{complexity def.}
Let $\K$ be an algebraically closed field and let $V$ be an affine algebraic variety in $\A_{\K}^n$ for some positive integer $n$. We will define the complexity of $V$ (denoted by $\cmp(V)$) to be the smallest positive integer $M$ s.t. $M \geq n$ and the ideal of functions vanishing on $V$ is generated by at most $M$ polynomials with a degree at most $M$. If $X, Y$ are affine varieties and $\phi: X\to Y$ is a morphism of affine varieties then we define the complexity of $\phi$ to be the complexity of its graph. If $\K$ is not algebraically closed and $V$ is an affine variety defined over $\K$ with a $\K$-embedding to $\overline{\K}^n$, then we will define the complexity of $V$ to be the complexity of $V$ when considered as a $\overline{\K}$-variety.
\end{defn}

The following lemma is one of the reasons we want to use complexity in this paper:
\begin{lemma}[{\cite[Lemma 11]{ultralimit}}]
\label{cmp(X^0)}
Let $\K$ be an algebraically closed field and let $X$ be an affine algebraic variety defined over $\K$. Assume that $X$ has complexity at most $M$. Then there exist a constant $C_{ic} = C_{ic}(M)$  s.t., $X$ has at most $C_{ic}(M)$ geometrically irreducible components and any geometrically irreducible component of $X$ has complexity at most $C_{ic}(M)$. 
\end{lemma}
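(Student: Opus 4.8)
The plan is to deduce the lemma from two effective (bounds-depending-only-on-the-input-data) results: the Bézout inequality, which controls the number and the degrees of the irreducible components of $X$, and effective elimination theory (effective computation of the radical and of its minimal primes, i.e. effective primary decomposition), which controls the number and degrees of the generators of the ideal of each component. Since all the quantities these produce are bounded by functions of the number of ambient variables and the degrees of a defining system, and both of those are $\le M$, the output is a bound $C_{ic}(M)$ depending only on $M$.

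First I would write $X = V(f_1,\dots,f_s)\subseteq \A^n_\K$ with $s\le M$, $n\le M$ and $\deg f_i\le M$. Since $\K$ is algebraically closed, base change to $\overline\K$ is trivial, so a geometrically irreducible component of $X$ is just an irreducible component, i.e. the vanishing locus of a minimal prime over $\sqrt{(f_1,\dots,f_s)}$. By the affine Bézout inequality (in the form due to Heintz, valid over any algebraically closed field and insensitive to $\charr\K$), the sum of the degrees of the irreducible components of $X$ is at most $M^n\le M^M$; in particular $X$ has at most $M^M$ irreducible components and each of them has degree at most $M^M$. This already settles the first assertion.

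Next I would fix an irreducible component $Z$ of $X$: it is an irreducible affine variety of degree $\le D:=M^M$ inside $\A^n_\K$ with $n\le M$, cut out by the prime ideal $\mathfrak p = I(Z)$, and it remains to bound $\cmp(Z)$, i.e. the number and degrees of a generating set of $\mathfrak p$, by a function of $M$ alone. This is exactly what effective elimination theory supplies: from the generators of $(f_1,\dots,f_s)$ one can algorithmically pass to $\sqrt{(f_1,\dots,f_s)}$ and then to its minimal primes, and the degrees (and number) of the polynomials occurring along the way are bounded by an explicit function of $n$ and $\max_i\deg f_i$ — this goes back to Hermann and Seidenberg and has been made quantitative via the doubly-exponential degree bounds for Gröbner bases (Dubé and successors). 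Taking $C_{ic}(M)$ to be the maximum of $M^M$ and this generator-number/degree bound finishes the argument.

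The hard part, and the point one must be careful about, is this last step: one must ensure that the passage from the radical ideal to its individual minimal primes — which in general algorithms involves factoring polynomials over $\K$ — and the attendant degree bounds are genuinely uniform in the field $\K$, independently of its characteristic. If one prefers to bypass explicit (and enormous) elimination bounds entirely, there is a soft alternative that I would find cleaner: the number of geometrically irreducible components of the fibres of a family of affine schemes, and the condition that those components all have complexity $\le N$, are constructible in the parameters (generic flatness together with the constructibility/openness statements of EGA IV), so if no $C_{ic}(M)$ worked one could form an ultraproduct of counterexamples over fields $\K_i$ and obtain, over the algebraically closed ultraproduct field, a variety of complexity $\le M$ having infinitely many irreducible components or a component of infinite complexity — absurd. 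Either route yields the lemma; the explicit one produces an effective (if astronomically large) $C_{ic}$, the model-theoretic one is shorter but non-constructive.
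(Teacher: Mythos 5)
The paper offers no proof of this lemma at all: it is quoted verbatim from \cite[Lemma 11]{ultralimit}, and the bibliography key already hints that the source establishes it by an ultraproduct/ultralimit compactness argument --- essentially the ``soft alternative'' you sketch in your last paragraph. So your proposal is not comparable to an internal argument of the paper, but as a self-contained proof it is sound in outline, and in fact your two routes are closer to each other than you suggest. Two caveats. First, in the explicit route, the Bézout step (number of components and their degrees bounded by $M^{M}$) is unproblematic over any algebraically closed field, but the step you yourself flag as delicate --- passing from a bounded-degree generating set to the minimal primes with degree and cardinality bounds uniform in $\K$ and in $\charr\K$ --- should not be referred to Hermann/Seidenberg/Gr\"obner-basis results, which are algorithmic statements requiring computability or perfectness hypotheses on the field; the clean uniform statement (there is $\beta(n,d)$ such that for any field the radical and each minimal prime of an ideal generated in degrees $\le d$ in $n$ variables are generated in degrees $\le\beta(n,d)$, and there are at most $\beta(n,d)$ minimal primes) is due to van den Dries and Schmidt, and is itself proved by ultraproducts. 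Second, in the soft route the contradiction is phrased too quickly: the ultraproduct variety over the ultraproduct field automatically has finitely many components, each of finite complexity, so no absurdity arises until you show that the irreducible decomposition transfers between the ultraproduct and almost all factors (minimal primes of the ultraproduct ideal are ultraproducts of minimal primes, via faithful flatness and bounded-degree definability, or equivalently the constructibility statements you cite applied to the universal family over the parameter space of systems of $\le M$ polynomials of degree $\le M$); that transfer is precisely the nontrivial input and is again the van den Dries--Schmidt machinery. With either of these inputs made explicit, your argument proves the lemma; the explicit route yields an effective (huge) $C_{ic}$, while the compactness route, like the cited source, does not.
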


Another main tool that will help us is the following lemma:

%coi - complexity of image
\begin{lemma}
\label{cmp(Im phi)}
Let $p$ be a prime number and let $X,Y$ be affine algebraic varieties over an algebraically closed field $\K$. Let $\phi:X\rightarrow Y$ be morphism of algebraic varieties that is defined over $\K$. Assume that $X,Y,\phi$ have complexity at most $M$. Then there exists a constant $C_{coi} = C_{coi}(M)$ s.t. $\overline{\Im\phi}$ has complexity at most $C_{coi}(M)$.
\end{lemma}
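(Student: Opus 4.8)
The plan is to reduce the statement to a uniform version of the elimination theorem (Chevalley's theorem on constructible images), tracking degree bounds through the elimination of variables. First I would observe that it suffices to bound the complexity of $\overline{\Im\phi}$ as a subvariety of $Y$, and since $Y$ itself has complexity at most $M$, it is enough to bound the complexity of $\overline{\Im\phi}$ inside the ambient affine space $\A^m_\K$ in which $Y$ sits (here $m \le M$). Replacing $\phi$ by its graph, which lives in $\A^n_\K \times \A^m_\K$ and has complexity at most $M$ by hypothesis, the set $\overline{\Im\phi}$ is precisely the Zariski closure of the projection of the graph onto the last $m$ coordinates. So the whole problem becomes: given a variety $Z \subseteq \A^{n+m}_\K$ cut out by at most $M$ polynomials of degree at most $M$, bound the complexity of $\overline{\pi(Z)}$ where $\pi$ forgets the first $n$ coordinates.

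The key step is that the ideal of $\overline{\pi(Z)}$ is the elimination ideal $I(Z) \cap \K[y_1,\dots,y_m]$, and there are effective bounds — due to the theory of Gröbner bases (e.g.\ the degree bounds of Dubé, or Hermann's classical bounds) — for the degrees of generators of an elimination ideal in terms of the number of variables and the degrees of the original generators. Concretely, one has a function $B(n+m, M)$, depending only on $n+m$ and $M$, bounding the degrees of a generating set of $I(Z) \cap \K[y]$; and the number of such generators is then bounded by the number of monomials of degree at most $B$ in $m$ variables, i.e.\ by $\binom{m + B}{m}$. Since $n, m \le M$, all of these quantities are bounded by a function of $M$ alone, and moreover these bounds are \emph{characteristic-free} (the Gröbner/Hermann bounds do not depend on $\K$), which is exactly what is needed since the eventual application runs over all large primes $p$. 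Setting $C_{coi}(M)$ to be the maximum of $m$ and this degree-times-count bound gives the claim.

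Alternatively, and perhaps more cleanly, I would phrase this via a compactness/ultraproduct argument to avoid citing explicit elimination bounds: the class of data $(n, m, X, Y, \phi)$ with complexity $\le M$ is, up to the choice of finitely many coefficients, a "bounded" family, and one can argue that if no uniform bound $C_{coi}(M)$ existed, one could extract a sequence of counterexamples whose ultraproduct would be a morphism of varieties over an algebraically closed field (the ultraproduct of the base fields) with non-constructible — hence impossible — image, contradicting Chevalley's theorem. This is the same style of argument underlying Lemma~\ref{cmp(X^0)} as cited from \cite{ultralimit}, so it fits the paper's toolkit.

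The main obstacle is ensuring the bound is genuinely uniform in the field $\K$, and in particular independent of its characteristic: naive Gröbner basis computations can behave differently in small characteristic, so I would need to either invoke a characteristic-free form of the degree bounds (Dubé's bound has this property, as it depends only on the number of variables and the degree of the input) or run the ultraproduct argument carefully, checking that "complexity at most $M$" is a first-order-expressible constraint on the bounded coefficient data so that Łoś's theorem applies. A secondary, more routine point is bookkeeping the passage from $\phi$ to its graph and back — making sure that the complexity of $\overline{\Im\phi}$ as a subvariety of the ambient space also controls it as a subvariety of $Y$ — but this is immediate since intersecting with the (bounded-complexity) ideal of $Y$ only adds finitely many bounded-degree generators.
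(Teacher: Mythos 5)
Your first (elimination-theoretic) route is correct, but it is genuinely different from what the paper does. You pass to the graph and bound the generators of the elimination ideal $I(Z)\cap\K[y]$ via characteristic-free effective bounds (Hermann/Dub\'e) on Gr\"obner bases in an elimination order; since the elimination ideal of a radical ideal is radical and is exactly the vanishing ideal of $\overline{\pi(Z)}$, this does bound the complexity in the sense of Definition \ref{complexity def.}, and all parameters ($n+m\le M$, degrees, number of generators) depend only on $M$. The paper instead avoids effective elimination entirely: after reducing to $X$ irreducible via Lemma \ref{cmp(X^0)}, it converts complexity to degree using Proposition \ref{cmp equiv. to deg}, invokes the cited fact from \cite{Degree} that the degree of an irreducible variety does not increase under a linear projection (applied to the projection of the graph onto $Y$), and converts back, giving $C_{coi}(M)=C_{bc}(M,C_{bd}(M,M))$. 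Your approach buys a self-contained, explicitly effective bound at the cost of importing Gr\"obner-basis machinery; the paper's buys a two-line proof at the cost of the degree--complexity dictionary it has already set up (and of the irreducibility reduction, which your argument does not need). One caution about your ultraproduct alternative: the contradiction is not that the limiting image is ``non-constructible'' (Chevalley guarantees it is constructible); rather, one must take the bounded-complexity constructible description of the image in the ultraproduct and transfer it back by \L o\'s after fixing the shape and degrees of the defining formula, then pass from a bounded-complexity constructible set to its closure (e.g.\ via Lemma \ref{cmp(X^0)}). This is exactly the style of argument in \cite{ultralimit}, but as stated your sketch skips the transfer step that actually produces the uniform bound.
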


To prove this lemma, we need to introduce the notion of degree:
\begin{defn}[degree]
Let $\K$ be an algebraically closed field and let V be an affine algebraic variety in $\A_{\K}^n$ for some positive integer $n$. We will define the degree of $V$ (denoted by deg$(V)$) to be the cardinally of the intersection of $V$ with a generic affine space of dimension $n-\dim V$. If $V$ is a constructible set in $\A_{\K}^n$ (i.e., a union of Zariski-locally-closed sets), then we will define $\deg (V) := \deg(\overline{V})$.
\end{defn}

It turns out that the notion of degree and the notion of complexity are ``equivalent'', as presented in the following proposition:

\begin{prop}
\label{cmp equiv. to deg}
There exist two functions (monotonic in every variable) $C_{bd},C_{bc}:\N^2\rightarrow\N$ s.t. for any positive integers $n,M$, an algebraically closed field $\K$, and an affine algebraic variety $V$ in $\A_{\K}^n$ we have:
\begin{enumerate}
    \item If $V$ has degree less then $M$, then $V$ has complexity less the $C_{bc}(n,M)$.
    \item If $V$ has complexity less then $M$, then $V$ has degree less the $C_{bd}(n,M)$.
\end{enumerate}
\end{prop}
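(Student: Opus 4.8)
The plan is to prove the two implications separately. Direction (2)---that complexity controls degree---is immediate from B\'ezout, while direction (1)---that degree controls complexity---is the substantive one and occupies the bulk of the work.

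\emph{Direction (2).} Suppose $\cmp(V) < M$, so that $I(V) = (f_1,\dots,f_r)$ with $r \le M-1$ and each $\deg f_i \le M-1$, and $V = V(f_1,\dots,f_r)$. By the refined B\'ezout inequality (in Heintz's form), the sum of the degrees of the irreducible components of $V(f_1,\dots,f_r)$ is at most the product of the $\min(r,n)$ largest of the $\deg f_i$, hence at most $(M-1)^n$; in particular $\deg V \le (M-1)^n$. Thus $C_{bd}(n,M) := M^n + 1$ works, and it is monotone in each variable.

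\emph{Direction (1).} Assume $\deg V < M$. I proceed in two stages. First I reduce to a bound on the \emph{degrees of generators} of $I(V)$: if $I(V)$ is generated in degrees $\le D$ for some $D = D(n,M)$, then it is already generated by the $\K$-subspace $I(V)_{\le D}$ of polynomials in $I(V)$ of degree $\le D$, and $\dim_\K I(V)_{\le D} \le \dim_\K \K[x_1,\dots,x_n]_{\le D} = \binom{n+D}{n}$; a basis of $I(V)_{\le D}$ then exhibits $I(V)$ as generated by at most $\binom{n+D}{n}$ polynomials of degree at most $D$. Since $\binom{n+D}{n}$ dominates both $n$ and $D$, this gives $\cmp(V) \le \binom{n+D}{n}$, so we may set $C_{bc}(n,M) := \binom{n+D(n,M)}{n} + 1$. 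Second, I bound $D$: pass to the projective closure $\overline V \subseteq \mathbb{P}^n_{\K}$, which is reduced and satisfies $\deg \overline V = \deg V < M$ (a generic linear space computing the degree meets $\overline V$ only in its affine part, since the locus of $\overline V$ at infinity has strictly smaller dimension). The saturated homogeneous ideal of a closed subscheme of $\mathbb{P}^n$ is generated in degrees at most its Castelnuovo--Mumford regularity, and for a \emph{reduced} closed subscheme of $\mathbb{P}^n$ the regularity admits an (explicit, if crude) bound $R(n,d)$ depending only on the ambient dimension $n$ and the degree $d$. Dehomogenizing, $I(V)$ is generated in degrees $\le R(n,M) =: D(n,M)$. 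Finally, replacing all the bounds above by their running maxima makes $C_{bc}, C_{bd}$ monotone in every variable.

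The hard part is exactly the regularity bound in the second stage: controlling the degrees of the defining equations of $V$ purely in terms of $\deg V$ and $n$. One may quote this from the literature on regularity of reduced projective schemes (Mumford; Bertram--Ein--Lazarsfeld; Chardin--Philippon); alternatively, the required generator-degree estimate can be proved directly by induction on $\dim V$: the base case is $\dim V = 0$, i.e.\ at most $M-1$ points of $\A^n$, whose ideal is generated in degrees bounded by the number of points; the inductive step cuts $V$ by a sufficiently general hyperplane---chosen so that reducedness and the degree are preserved and the dimension drops by one, which works in every characteristic if one argues with the generic hyperplane over the parameter space and then specializes---and lifts generators from the section, this lifting being precisely the classical regularity induction. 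One should also keep in mind a point of normalization: for direction (1) the relevant meaning of $\deg V$ is the sum of the degrees of all irreducible components of $V$ (equivalently, the generic-linear-section count when $V$ is equidimensional); with this convention $V$ has at most $\deg V$ components, each of degree at most $\deg V$, and the statement holds as written.
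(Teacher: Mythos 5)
Your direction (2) is correct: from generators of $I(V)$ of degree at most $M-1$ the refined B\'ezout inequality bounds the (even cumulative) degree by $(M-1)^n$, which dominates the paper's generic-section count. The paper argues this step differently --- it intersects $V$ with a generic affine subspace $W$ with $\dim(V\cap W)=0$, notes $\cmp(V\cap W)\le\cmp(V)+n$, and bounds the number of points by the component bound of Lemma \ref{cmp(X^0)}, giving $C_{bd}(n,M)=C_{ic}(M+n)$ --- but both routes are fine, and your closing remark about the normalization of $\deg$ (sum over all components versus generic-section count, which ignores lower-dimensional components) is a genuine and relevant observation.

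The gap is in direction (1), which is exactly the part the paper does not prove at all: it is quoted from \cite[Theorem 9]{ultralimit}, where it is obtained by a compactness/boundedness-type argument rather than by effective commutative algebra. Your plan hinges on the claim that the Castelnuovo--Mumford regularity (or at least the generator degrees of the saturated ideal) of an \emph{arbitrary} reduced closed subscheme of $\mathbb{P}^n_{\K}$, over an algebraically closed field of \emph{any} characteristic, is bounded by a function of $n$ and the degree. That claim is true, but the sources you name do not deliver it in this generality: Mumford's and Bertram--Ein--Lazarsfeld's bounds require smoothness (and characteristic $0$), and Chardin--Philippon treat equidimensional schemes under hypotheses, while here $V$ may be reducible, singular, and non-equidimensional, and the regularity of a union (an intersection of ideals) is not formally controlled by that of the pieces. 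Your fallback induction is precisely where the difficulty lives: a generic hyperplane misses every lower-dimensional component, so the induction cannot see them; reducedness of generic hyperplane sections in characteristic $p$ needs an argument; and, most importantly, lifting generator-degree bounds from $V\cap H$ back to $V$ is not the ``classical regularity induction'' as a black box --- controlling the $H^1$/saturation contribution is the hard content of every effective regularity bound for non-smooth schemes (the collapse of the Eisenbud--Goto expectations shows how delicate this is). So as written, direction (1) is a plan with the key lemma missing. The honest ways to close it are either a boundedness argument (stratify the parameter space of reduced subschemes of $\mathbb{P}^n$ of degree $<M$ over $\Spec\Z$ and use semicontinuity on each stratum), or the ultraproduct argument of the paper's reference; if you want an explicit $C_{bc}$, you must quote a regularity-type bound valid for arbitrary reduced schemes in arbitrary characteristic, which is a much more specific citation than the ones you give.
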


\begin{proof}
The first part was already proved in \cite[Theorem 9]{ultralimit}. For the second part, notice that if $W$ is an affine space s.t. $\dim(W\cap V)=0$, then, since intersecting with $W$ adds at most $n$ linear conditions, we get: $\cmp(V\cap W) \leq \cmp(V) + n$. And so by Lemma \ref{cmp(X^0)} we have that $|W\cap V|\leq C_{ic}(\cmp(V)+n)$ and thus, since $\cmp(V)<M$ by our assumption, we can take $C_{bd}(n,M):=C_{ic}(M+n)$. 
\end{proof}

The reason we want to use the notion of degree is the following lemma:
\begin{lemma}[{\cite[Lemma 2]{Degree}}]
Let $m,n$ be positive integers. Let $V$ be an affine algebraic irreducible variety in $\A^n$. Let $\phi:\A^n\rightarrow\A^m$ be an affine linear map. Then we have: $\text{deg}(V)\geq \text{deg}(\overline{\phi(V)})$.
\end{lemma}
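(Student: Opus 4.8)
The final statement is the lemma that for an affine linear map $\phi:\A^n\to\A^m$ and an irreducible affine variety $V\subseteq\A^n$, one has $\deg(V)\ge\deg(\overline{\phi(V)})$. The plan is to reduce to the case of a generic linear projection and then count intersection points with a generic linear subspace. First I would observe that the closure $\overline{\phi(V)}$ is irreducible (continuous image of an irreducible set is irreducible), so its degree is computed by intersecting with a generic affine subspace $L\subseteq\A^m$ of codimension $d':=\dim\overline{\phi(V)}$, giving exactly $\deg(\overline{\phi(V)})$ reduced points. The key is to pull this configuration back through $\phi$.

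**Key steps.** I would proceed as follows. (1) Translate so that $\phi$ is linear rather than merely affine; translations change nothing about degrees. (2) Replace $\phi$ by the restriction of a coordinate projection after a generic change of basis: since $\overline{\phi(V)}$ has dimension $d'$, a generic linear subspace of $\A^m$ meeting it in $\deg(\overline{\phi(V)})$ points can be taken to be a coordinate subspace, and $\phi$ followed by the quotient onto $\A^{d'}$ is, for generic choices, a dominant map $V\to\A^{d'}$ (the generic fibre is nonempty of dimension $\dim V - d'$). (3) For a generic point $y\in\A^{d'}$, the fibre $\phi^{-1}(y)\cap V$ is an affine subvariety of $V$ of dimension $\dim V - d'$; intersect it with a further generic affine subspace of $\A^n$ of complementary dimension inside $V$. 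The total preimage of $y$ together with this slicing is a generic affine subspace of $\A^n$ of codimension $\dim V$, hence meets $V$ in exactly $\deg(V)$ points. (4) These $\deg(V)$ points of $V$ surject onto the $\deg(\overline{\phi(V)})$ points of $\overline{\phi(V)}\cap L$ (each of the latter is hit, since $\phi(V)$ is dense in $\overline{\phi(V)}$ and the generic point $y$ can be chosen among those image points after a generic translation). Counting with multiplicity one on both sides by genericity gives $\deg(V)\ge\deg(\overline{\phi(V)})$.

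**Main obstacle.** The delicate point is step (3)–(4): making precise that a generic affine subspace of $\A^n$ of codimension $\dim V$ can be arranged to be simultaneously a union of fibres of $\phi$ over a generic linear slice of $\A^m$ \emph{and} to cut $V$ transversally in $\deg V$ reduced points, while its image slice cuts $\overline{\phi(V)}$ in $\deg\overline{\phi(V)}$ reduced points. This requires a careful dimension count and an appeal to generic smoothness/Bertini-type transversality (valid in the characteristic-zero reduction, or one argues directly with generic hyperplanes over an infinite field). Since this lemma is quoted from \cite{Degree}, I would in practice cite it; but the self-contained argument is the fibre-counting one sketched above, and the only real work is verifying the genericity claims, which are standard.

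\begin{proof}
See \cite[Lemma 2]{Degree}. The idea is that $\overline{\phi(V)}$ is irreducible of some dimension $d'\le\dim V$; a generic affine subspace $L\subseteq\A^m$ of codimension $d'$ meets it in $\deg(\overline{\phi(V)})$ reduced points, and after a generic translation these points lie in $\phi(V)$. Pulling back $L$ through $\phi$ and intersecting further with a generic affine subspace of $\A^n$ of the complementary dimension inside $V$ produces a generic affine subspace of $\A^n$ of codimension $\dim V$, which meets $V$ in exactly $\deg(V)$ reduced points; these surject onto the $\deg(\overline{\phi(V)})$ points above, whence the inequality.
\end{proof}
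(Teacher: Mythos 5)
The paper offers no proof of this lemma at all: it is quoted directly from \cite[Lemma 2]{Degree}, and the only content the paper adds is the subsequent remark that the notion of degree in \cite{Degree} differs from the one used here but coincides for irreducible varieties. Your decision to cite the source is therefore exactly what the paper does; the issue is with the self-contained sketch you attach.

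That sketch has a genuine gap at steps (3)--(4). The subspace you construct, $\phi^{-1}(L)\cap N$, is \emph{not} a generic affine subspace of codimension $\dim V$: its direction space is forced to meet the kernel of the linear part of $\phi$ in larger-than-generic dimension whenever $\phi$ is non-injective. For instance, if $V$ is a surface in $\A^3$ and $\phi$ is a projection to $\A^1$, your construction only produces lines whose direction lies inside the $2$-dimensional kernel of $\phi$ --- a proper closed condition in the space of all lines --- so you cannot conclude ``by genericity'' that it meets $V$ in exactly $\deg(V)$ reduced points. What the construction actually yields is \emph{some} affine subspace of codimension $\dim V$ whose intersection with $V$ is finite and contains at least $\deg(\overline{\phi(V)})$ points; to finish, you need the separate fact that \emph{any} affine subspace of complementary dimension meeting $V$ in finitely many points meets it in at most $\deg(V)$ of them. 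That fact is true and standard (it is essentially the definition of degree adopted in \cite{Degree}, and follows from a Bezout-type bound or a semicontinuity argument on the projective closure), but it does not follow from the naive ``generic count'' definition used in this paper without an extra argument --- which is precisely why the paper needs its remark that the two notions of degree agree for irreducible varieties. So either supply that upper-bound lemma explicitly, or do as the paper does and rest on the citation alone.
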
 

\begin{remark}
In \cite{Degree}, the definition of degree is quite different than in this paper(see Definition 1 and Remark 2 in \cite{Degree}). However, for irreducible algebraic varieties, the definitions are the same, so we can use the lemma in this case. 
\end{remark}

We are now ready to prove Lemma \ref{cmp(Im phi)}:
\begin{proof}[Proof of Lemma \ref{cmp(Im phi)}]
By lemma \ref{cmp(X^0)}, we can assume W.L.O.G that $X$ is irreducible. Let $V$ be the graph of $\phi$. Let $\pi$ be the projection of $V$ to $Y$. Applying the above lemmas we get:
\begin{align*}
    \cmp(\overline{\phi(X)}) = \cmp(\overline{\pi(V)}) \leq C_{bc}(M,\deg(\overline{\pi(V)})) \leq 
    C_{bc}(M,\deg(V)) \\
    \leq C_{bc}(M,C_{bd}(M,\cmp(V))) \leq C_{bc}(M,C_{bd}(M,M)).
\end{align*}
\end{proof}

\subsection{The Lang-Weil bounds} 
Another main reason that we want to use complexity here is the well-known Lang Weil bounds (\cite{langWeilOriginal}), as given here in the following form:

\begin{thm}[{\cite[Corolary 4]{lang_weil}}]
Let $\F$ be a finite field. Let $V$ be an affine variety defined over $\F$ of complexity at most $M$. Denote by $c(V)$ the number of geometrically irreducible components of $V$ that are defined over $\F$ and have the same dimension as $V$. Then:
 $$|V(\F)| = (c(V) + O_M(|\F|^{-1/2})) |\F|^{\hbox{dim}(V)}$$ where $f=O_M(g)$ if there exists a constant $C_M$, depending only on $M$ s.t. $f \leq C_M\cdot g$.
\end{thm}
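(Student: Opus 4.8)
\emph{Proof proposal.} The plan is to deduce this uniform count from the Grothendieck--Lefschetz trace formula together with two quoted inputs: Deligne's purity bounds (Weil~II) and Katz's uniform bound on the sum of $\ell$-adic Betti numbers of a variety in terms of its ambient dimension, its dimension, and its degree. (Alternatively one can run the original Lang--Weil fibration argument, bootstrapping from the Hasse--Weil bound for curves; I sketch that route at the end.)

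First I would reduce to the absolutely irreducible case. Write $V_{\overline\F} = V_1 \cup \dots \cup V_r$ for the decomposition into geometrically irreducible components; by Lemma~\ref{cmp(X^0)} we have $r \le C_{ic}(M)$ and $\cmp(V_i) \le C_{ic}(M)$ for each $i$. The Frobenius $F$ permutes the $V_i$, and if $V_i$ is not $F$-stable then every $\F$-point of $V_i$ lies in some $V_i \cap V_j$ with $j \ne i$, a variety of strictly smaller dimension whose complexity is again bounded in terms of $M$ (intersecting adds finitely many equations of bounded degree, then apply Lemma~\ref{cmp(X^0)}). Hence by inclusion--exclusion and induction on dimension,
\[
|V(\F)| = \sum_{\substack{i:\ \dim V_i = \dim V \\ V_i \text{ defined over } \F}} |V_i(\F)| \;+\; O_M\!\left(|\F|^{\dim V - 1}\right),
\]
so it suffices to show, for an absolutely irreducible $W$ over $\F$ of dimension $d$ and complexity at most $M'$ (a function of $M$), that $\bigl||W(\F)| - |\F|^{d}\bigr| \le C_{M'}|\F|^{d-1/2}$; summing over the $c(V)$ surviving top-dimensional components then yields the theorem.

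For the absolutely irreducible case I would invoke
\[
|W(\F)| = \sum_{i=0}^{2d} (-1)^i \operatorname{Tr}\!\bigl(F \mid H^i_c(W_{\overline\F}, \mathbb{Q}_\ell)\bigr).
\]
Since $W$ is absolutely irreducible of dimension $d$, $H^{2d}_c(W_{\overline\F},\mathbb{Q}_\ell) \cong \mathbb{Q}_\ell(-d)$, contributing exactly $|\F|^{d}$. By Deligne's bounds the eigenvalues of $F$ on $H^i_c$ have absolute value at most $|\F|^{i/2}$, so the remaining terms are bounded by $\sum_{i=0}^{2d-1} \dim H^i_c(W)\cdot |\F|^{i/2} \le |\F|^{d-1/2}\sum_i \dim H^i_c(W)$. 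Finally, by Proposition~\ref{cmp equiv. to deg} the degree of $W$ is bounded by a function of $M'$, and Katz's estimate bounds $\sum_i \dim H^i_c(W)$ by a function of the ambient dimension, $d$, and $\deg W$, hence by a function of $M'$ alone. This gives the error term $O_{M'}(|\F|^{d-1/2})$, and collecting everything proves the statement.

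The main obstacle is purely bookkeeping: one must verify that every auxiliary variety arising along the way --- the components $V_i$, their pairwise intersections, and (in the elementary route) the generic projections, hyperplane sections, and incidence varieties --- has complexity, equivalently degree, bounded by a function of $M$ only; this is exactly what Lemmas~\ref{cmp(X^0)} and~\ref{cmp(Im phi)} and Proposition~\ref{cmp equiv. to deg}, together with the standard polynomial bounds on how degree behaves under intersection and projection, are designed to supply. The deep analytic content --- Weil~II and the Betti-number bound, or the Hasse--Weil bound for curves in the alternative fibration argument of Lang and Weil, where one presents $W$ as generically finite over affine $d$-space, slices by generic hyperplanes down to curves, and reassembles the count --- is quoted rather than reproved.
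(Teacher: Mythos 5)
The paper offers no proof of this statement at all: it is imported verbatim as \cite[Corollary 4]{lang_weil}, and everything downstream only uses it as a black box. So your proposal cannot match ``the paper's approach''; what you have written is a proof sketch of the quoted result itself, and as such it is essentially correct and is the standard modern derivation. The reduction to the geometrically irreducible case is sound: Frobenius-stable reduced closed subvarieties over a finite (hence perfect) field are exactly those defined over $\F$, a rational point of a non-stable component $V_i$ lies in $V_i\cap F(V_i)$, and all the auxiliary varieties (components, pairwise intersections) have complexity bounded in terms of $M$ by Lemma \ref{cmp(X^0)}, so their point counts are $O_M(|\F|^{\dim V-1})$ once you have the crude bound $|W(\F)|=O_{M'}(|\F|^{\dim W})$ --- which your induction on dimension does supply, provided you note that the complexities stay bounded by a function of $M$ at every stage (your ``bookkeeping'' caveat is exactly the right thing to check, and Proposition \ref{cmp equiv. to deg} plus standard degree bounds under intersection make it work). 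For the irreducible case, $H^{2d}_c\cong\mathbb{Q}_\ell(-d)$ with Frobenius acting by $|\F|^d$, Deligne's Weil~II bounds on $H^i_c$ for possibly singular open varieties, and Katz's uniform bound on $\sum_i\dim H^i_c$ in terms of ambient dimension, number of equations and degrees (hence in terms of complexity) give precisely the $O_{M}(|\F|^{d-1/2})$ error, uniformly in $\F$, which is the uniformity the paper actually needs. The trade-off between your route and the paper's is simply self-containedness versus economy: the paper outsources the entire statement to one reference, while you reprove it at the cost of quoting Weil~II and the Betti-number bound (or, in your alternative, the Hasse--Weil bound inside the original Lang--Weil fibration argument); either is legitimate, and your version has the mild advantage of making the dependence of the implied constant on $M$ visibly mechanical.
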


From this theorem we may conclude two important corollaries:

\begin{cor}
There exists a function $C_{lw}: \N \rightarrow \N$ with the following property: \newline 
Let $n,M$ be positive integers and let $\F$ be a finite field with $|\F|>C_{lw}(M)$. Let $X_1,X_2$ be affine algebraic sub-varieties of $\A_{\Fc}^n$ defined over $\F$ with complexity at most $M$. Assume that $X_1$ is geometrically irreducible and that $\dim(X_1)>\dim(X_2)$. Then we have: $|X_1(\F)| > |X_2(\F)|$. In particular: $X_1(\F)\neq X_2(\F)$.
\end{cor}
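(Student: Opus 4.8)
The statement to prove is the corollary to Lang--Weil: there is a function $C_{lw}:\N\to\N$ such that for $n,M$ positive integers and a finite field $\F$ with $|\F|>C_{lw}(M)$, if $X_1,X_2\subseteq\A^n_{\Fc}$ are affine varieties defined over $\F$ of complexity at most $M$, with $X_1$ geometrically irreducible and $\dim(X_1)>\dim(X_2)$, then $|X_1(\F)|>|X_2(\F)|$.

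\begin{proof}
The plan is to feed both $X_1$ and $X_2$ into the Lang--Weil theorem quoted above and compare the resulting asymptotics, choosing $C_{lw}(M)$ large enough that the leading term of $|X_1(\F)|$ dominates. First I would apply the theorem to $X_1$: since $X_1$ is geometrically irreducible and defined over $\F$, we have $c(X_1)=1$, so $|X_1(\F)| = (1 + O_M(|\F|^{-1/2}))|\F|^{\dim(X_1)} \geq (1 - A_M|\F|^{-1/2})|\F|^{\dim(X_1)}$ for some constant $A_M$ depending only on $M$. Next I would apply the theorem to $X_2$: here $c(X_2)$ is the number of top-dimensional geometrically irreducible components defined over $\F$, which by Lemma \ref{cmp(X^0)} is at most $C_{ic}(M)$, a constant depending only on $M$. (Note $X_2$ need not be equidimensional, but every component has dimension at most $\dim(X_2)$, and applying the bound to $X_2$ directly gives $|X_2(\F)| = (c(X_2)+O_M(|\F|^{-1/2}))|\F|^{\dim(X_2)}$.) Hence $|X_2(\F)| \leq (C_{ic}(M) + A_M|\F|^{-1/2})|\F|^{\dim(X_2)}$, enlarging $A_M$ if necessary so that the same constant works in both estimates.

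Since $\dim(X_1) > \dim(X_2)$, we have $\dim(X_1) \geq \dim(X_2)+1$, so dividing through by $|\F|^{\dim(X_2)}$ it suffices to ensure
\[
(1 - A_M|\F|^{-1/2})\,|\F| \;>\; C_{ic}(M) + A_M|\F|^{-1/2}.
\]
This is a concrete inequality in the single quantity $|\F|$ with all other quantities constants depending only on $M$; its left side grows linearly in $|\F|$ while the right side is bounded, so it holds once $|\F|$ exceeds some threshold $C_{lw}(M)$ determined by $M$ alone (explicitly, one may take any $C_{lw}(M)$ with $C_{lw}(M) > 4A_M^2$ and $C_{lw}(M) > 2C_{ic}(M) + 2$, say, after a routine estimate). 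With this choice of $C_{lw}$, for every $\F$ with $|\F| > C_{lw}(M)$ we get $|X_1(\F)| > |X_2(\F)|$, and in particular $X_1(\F)\neq X_2(\F)$.

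The only mild subtlety — and the one place to be careful — is that $X_2$ is not assumed irreducible or equidimensional, so one should not naively write $c(X_2)$ without justifying that it is bounded in terms of $M$; this is exactly where Lemma \ref{cmp(X^0)} is used, giving the uniform bound $c(X_2)\leq C_{ic}(M)$. Everything else is an elementary manipulation of the two asymptotic estimates, so I expect no real obstacle beyond bookkeeping the constants.
\end{proof}
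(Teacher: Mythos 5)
Your proposal is correct and follows essentially the same route as the paper: apply the Lang--Weil theorem to both varieties, use $c(X_1)=1$ from geometric irreducibility and bound $c(X_2)$ by $C_{ic}(M)$ via Lemma \ref{cmp(X^0)}, then choose $|\F|$ large (in terms of $M$ alone) so that the higher-dimensional main term dominates. Your version merely makes explicit the constant-tracking that the paper leaves as ``if $|\F|$ is large enough''.
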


\begin{proof}
By Lemma \ref{cmp(X^0)} and the above theorem we have:
$|X_1(\F)|=(1 + O_M(|\F|^{-1/2})) |\F|^{\hbox{dim}(X_1)}$ and $|X_2(\F)|\leq (C_{ic}(M) + O_M(|\F|^{-1/2})) |\F|^{\hbox{dim}(X_2)}$. As a consequence, if $|\F|$ is large enough then $|X_1(\F)| > |X_2(\F)|$.
\end{proof}

\begin{cor}
\label{lang weil cor}
There exist a function $C_{lw2}: \N \rightarrow \N$ with the following property: \newline
Let $n,M$ be positive integers and let $\F$ be a finite field with $|\F|>C_{lw2}(M)$. Let $X_1,X_2$ be geometrically irreducible algebraic sub-varieties of $\A_{\Fc}^n$ defined over $\F$. Assume that both $X_1,X_2$ have complexity at most $M$ and that $X_1(\F) = X_2(\F)$. Then we have: $X_1=X_2$.
\end{cor}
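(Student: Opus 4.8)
The plan is to argue by contradiction, exploiting that geometric irreducibility turns a set-theoretic non-equality into a genuine drop in dimension. Suppose $X_1 \neq X_2$. The hypotheses are symmetric in $X_1$ and $X_2$, so we may assume without loss of generality that $X_1 \not\subseteq X_2$. Then $Z := X_1 \cap X_2$ is a Zariski-closed subset of $\A^n_{\Fc}$, defined over $\F$ (it is cut out by the union of the defining polynomials of $X_1$ and $X_2$), and it is a \emph{proper} closed subset of $X_1$; since $X_1$ is irreducible, this forces $\dim Z < \dim X_1$. The idea is then to feed $X_1$ and $Z$ into the preceding corollary, which converts the inequality $\dim X_1 > \dim Z$ into $|X_1(\F)| > |Z(\F)|$ once $|\F|$ is large in terms of their complexity — and this will contradict $X_1(\F) = X_2(\F)$, because $Z(\F) = X_1(\F)\cap X_2(\F) = X_1(\F)$.

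To run this, the one point requiring care is a uniform bound on $\cmp(Z)$ in terms of $M$ alone. Here I would route through degree: since $\cmp(X_i)\ge n$, we have $n\le M$, so by the second part of Proposition \ref{cmp equiv. to deg} each $X_i$ has degree at most $C_{bd}(M,M)$; by Bézout's inequality $\deg Z \le \deg X_1\cdot\deg X_2 \le C_{bd}(M,M)^2$; and then the first part of Proposition \ref{cmp equiv. to deg} gives $\cmp(Z)\le C_{bc}\!\left(M, C_{bd}(M,M)^2\right) =: M'$. (Monotonicity of $C_{bc},C_{bd}$ in every variable is what lets us replace the ambient dimension by $M$ throughout.)

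Now put $C(M) := \max(M, M')$ and define $C_{lw2}(M) := C_{lw}\!\left(C(M)\right)$, where $C_{lw}$ is the function from the preceding corollary. If $|\F| > C_{lw2}(M)$, then $X_1$ and $Z$ are subvarieties of $\A^n_{\Fc}$ defined over $\F$ of complexity at most $C(M)$, with $X_1$ geometrically irreducible and $\dim X_1 > \dim Z$, so the preceding corollary applies (with parameter $C(M)$) and yields $|X_1(\F)| > |Z(\F)|$. But $Z(\F) = X_1(\F)\cap X_2(\F) = X_1(\F)$, since $X_1(\F)=X_2(\F)$ by hypothesis, so $|X_1(\F)| > |X_1(\F)|$, which is absurd. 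Hence $X_1 = X_2$.

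I expect the only real obstacle to be the complexity bound for the intersection: ``complexity'' is defined through the full (radical) ideal of functions vanishing on $Z$, whereas the obvious description of $X_1\cap X_2$ only supplies generators of $I(X_1)+I(X_2)$, whose radical could a priori be much more complicated. The Bézout-plus-Proposition-\ref{cmp equiv. to deg} detour above is precisely what makes this bound depend on $M$ only; everything else in the argument is bookkeeping with the preceding corollary.
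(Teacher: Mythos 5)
Your proof is correct, and its skeleton is the same as the paper's: assume $X_1\neq X_2$, pass to the intersection $Z=X_1\cap X_2$, use geometric irreducibility of $X_1$ to force $\dim Z<\dim X_1$, and then apply the preceding corollary to contradict $Z(\F)=X_1(\F)\cap X_2(\F)=X_1(\F)$. The one place where you genuinely diverge is the complexity bound for the intersection. The paper simply sets $C_{lw2}(M):=C_{lw}(2M)$ and asserts that $X_3:=X_1\cap X_2$ ``clearly'' has complexity at most $2M$ --- which is immediate if one measures complexity by equations cutting out the set, but, as you observe, is not literally obvious under Definition \ref{complexity def.}, since the vanishing ideal of $X_1\cap X_2$ is the radical of $I(X_1)+I(X_2)$. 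Your detour --- bound $\deg X_i$ via part (2) of Proposition \ref{cmp equiv. to deg} (using $n\le M$ and monotonicity), apply B\'ezout's inequality to get $\deg Z\le C_{bd}(M,M)^2$, then return to complexity via part (1) --- repairs exactly this point and yields a bound $M'$ depending only on $M$, so your $C_{lw2}(M):=C_{lw}(\max(M,M'))$ works. The cost is importing B\'ezout's inequality, which the paper never states (its only degree lemma concerns linear projections), and applying Proposition \ref{cmp equiv. to deg} to the possibly reducible, non-equidimensional $Z$, which is fine under the cumulative-degree reading of the cited reference; also note the proposition is phrased with strict inequalities, so formally you should apply it with $M+1$ in place of $M$, which monotonicity absorbs. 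In short: same argument, with a more careful (and arguably more honest) treatment of the intersection-complexity step than the paper's one-line assertion.
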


\begin{proof}
Define $C_{lw2}(M):=C_{lw}(2M)$. Take $M,\F,X_1,X_2$ as in the statement of the corollary. It is enough to prove that $X_1 \subseteq X_2$. Assume otherwise, then we may define $X_3:=X_1\cap X_2$. it is clear that $X_3$ is defined over $\F$ and that it has complexity at most $2M$. Notice that since $X_1 \not\subseteq X_2$, we have that $X_3$ is a proper sub-variety of $X_1$. Since $X_1$ is geometrically irreducible, we get that $X_3$ has a smaller dimension than $X_1$. But since $X_3(\F)=X_1(\F)\cap X_2(\F)=X_1(\F)$, we have a contradiction to the last corollary.
\end{proof}

\section{Algebraic Groups}
\label{sec: algebraic groups}
We first give an important definition for this section:
\begin{defn}
\label{second big thm's set}
For any positive integers $n,M_{\cmp}$, a non-negative integer $M_{\dim}$ and a finite field $\F$, define $\cA(n,M_{\dim},M_{\cmp},\F)$ to be the set of all linear algebraic groups $G$, that are defined over $\F$, are subgroups of $(\GL_n)_{\Fc}$ of dimension at most $M_{\dim}$ and complexity at most $M_{\cmp}$.
\end{defn}
In this section we will prove the following generalization of Theorem \ref{second big thm lighter version}:
\begin{thm} 
\label{second big thm}
 There exists a function $C_N:\N^3 \rightarrow \N$ s.t. for any $n,M_{\dim},M_{\cmp}$ as in Definition \ref{second big thm's set}, there exists a constant $C = C(n,M_{\dim},M_{\cmp})$ s.t. if $\F$ is a finite field of characteristic $p>C(n,M_{\dim},M_{\cmp})$, then we have: $$|\cup_{G\in\cA(n,M_{\dim},M_{\cmp},\F)}\dimirr(G(\F))| \leq C_N(n,M_{\dim},M_{\cmp}).$$ 

\end{thm}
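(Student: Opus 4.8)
The plan is to bound $N(G(\F))$ uniformly by an induction on $\dim G$, peeling off normal subgroups via Clifford theory (Theorem \ref{fong's first reduction}, Lemma \ref{H^2 is good}) and controlling the quotients that appear using the algebro-geometric tools of Section \S\ref{sec: algebraic geometry} together with the Lang--Weil corollaries. The key point is that every structural operation we perform — taking the radical, the unipotent radical, the connected component, a Levi decomposition, the derived group, the center, a quotient — must be shown to preserve membership in some class $\cA(n',M_{\dim}',M_{\cmp}',\F)$ with $n',M_{\dim}',M_{\cmp}'$ bounded by a function of $n,M_{\dim},M_{\cmp}$ only; this is exactly what Lemma \ref{cmp(Im phi)}, Proposition \ref{cmp equiv. to deg} and Lemma \ref{cmp(X^0)} are designed to give us, since all these constructions are images of morphisms of bounded complexity or intersections adding boundedly many equations. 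I would first record a ``bookkeeping lemma'': for $G\in\cA(n,M_{\dim},M_{\cmp},\F)$, the groups $G^0$, $[G,G]$, $Z(G)$, the radical $R(G)$, the unipotent radical $R_u(G)$, a Levi subgroup $L$, and any quotient of $G$ by one of these all lie in $\cA(n',M_{\dim}',M_{\cmp}',\F)$ for $n',M_{\dim}',M_{\cmp}'$ bounded in terms of $n,M_{\dim},M_{\cmp}$, and $[G:G^0]$ is bounded by $C_{ic}(M_{\cmp})$ from Lemma \ref{cmp(X^0)}; moreover one can embed the quotients back into some $\GL_{n'}$ in a bounded way (e.g.\ via a faithful representation of bounded dimension/complexity, or more crudely by noting $G(\F)$ maps to a subgroup of $\GL_{n'}(\F)$ with $n',$ complexity bounded).

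Given that bookkeeping, the induction runs as follows. By Lemma \ref{N(H)~<N(G)} applied to $G^0\hookrightarrow G$ with index $\le C_{ic}(M_{\cmp})$, it suffices to bound $N(G^0(\F))$, so we may assume $G$ connected. Let $U=R_u(G)$, a connected unipotent normal subgroup; in large characteristic $U$ is split and $U(\F)$ is a $p$-group. For $\rho\in\irr(G(\F))$, restrict to $U(\F)$: by Clifford theory $\rho\downarrow_{U(\F)}$ is a sum of $G(\F)$-conjugates of a single orbit of characters, so passing to the inertia group $T$ and using Fong's first reduction (Theorem \ref{fong's first reduction}) we reduce to irreducibles of $T(\F)$ lying over a $U(\F)$-isotypic character. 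Here I would invoke Lemma \ref{H^2 is good}: in large characteristic the Schur multiplier obstruction $H^2(T/U(\F)\cap T,\mu_{p^\infty})$ vanishes (the order of $T/\!\cdot$ is prime to $p$ on the relevant piece, or one argues the $p$-part of $H^2$ dies since $T/U$ is reductive with bounded data), so such a $\rho$ factors as $\pi_1\otimes\pi_2$ with $\pi_1$ trivial on $U(\F)$ and $\pi_2\downarrow_{U(\F)}$ irreducible. The dimension of $\pi_2$ then only involves the character of $U(\F)$ and its stabilizer — a ``Heisenberg-type'' contribution of the form $\sqrt{[U(\F):\text{(radical of the form)}]}$, which by Lang--Weil is $|\F|^{d}$ for one of boundedly many integers/half-integers $d$ (the possible dimensions of kernels of the bilinear forms coming from bounded-complexity group data), so $\pi_2$ contributes boundedly many values; and $\pi_1$ is a representation of $(G/R_u(G))(\F)$, which is the $\F$-points of a connected reductive group of bounded dimension and complexity.

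It then remains to bound $N(H(\F))$ for $H$ connected reductive of bounded dimension/complexity over $\F$, $\charr\F$ large. Writing $H$ as an almost-product of its central torus $S$ and its derived group $[H,H]$ (semisimple, bounded rank), and using Lemma \ref{N(H)~<N(G)} to pass between $H(\F)$ and $(S\times[H,H])(\F)$ at bounded cost, one reduces to tori and to semisimple simply connected groups. For a torus of bounded dimension over $\F$, all irreducibles are one-dimensional, so $N=1$. For $[H,H]$ one uses the Deligne--Lusztig / Lusztig classification: the character degrees of $G^{sc}(\F)$ for a semisimple group of bounded rank are given by finitely many generic polynomials in $|\F|$ (with denominators dividing a fixed integer), the number of which depends only on the root datum, which in turn is one of finitely many given the bound on $\dim$ and rank — hence $N$ is bounded; the finitely many exceptions in small characteristic are excluded by the hypothesis $p>C$. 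Chaining the bounds through the induction on $\dim$ produces the desired $C_N(n,M_{\dim},M_{\cmp})$.

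The main obstacle I expect is the careful verification that all the group-theoretic constructions (especially Levi decompositions of reductive quotients, and re-embedding quotient groups faithfully into a $\GL_{n'}$ with controlled $n',M_{\cmp}'$) stay within bounded complexity uniformly in $\F$ and in $p$; the purely representation-theoretic inputs (Clifford theory, Fong, the vanishing of the Schur multiplier in large $p$, the genericity of reductive character degrees) are essentially off-the-shelf, but threading the complexity bounds through every step without a characteristic-dependent constant sneaking in is where the real work lies.
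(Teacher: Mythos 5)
Your overall architecture (reduce to connected $G$, induct on dimension, split according to the behaviour of $\rho\downarrow_{U(\F)}$ for $U$ the unipotent radical, finish the reductive case by classification) matches the paper, but there is a genuine gap at the heart of the argument: the non-isotypic case. After Clifford theory you pass to the inertia group $T=T(\chi)$ and then treat it as if it were the $\F$-points of a nice algebraic group --- you write ``irreducibles of $T(\F)$'', apply the $H^2$-vanishing lemma to $T/U(\F)$, and identify the resulting factor $\pi_1$ with a representation of $(G/R_u(G))(\F)$. None of this is available for free: $T(\chi)$ is a priori just an abstract subgroup of $G(\F)$, and Lemma \ref{H^2 is good} requires the vanishing of $H^2$, which the paper proves (Lemma \ref{H^2=1}) only for connected reductive groups, not for the quotient $T/U(\F)$, which need not be of that form. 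Moreover, even granting a structure theory for $T$, the dimension of $\rho$ is $[G(\F):T(\chi)]\cdot\dim\psi$ for $\psi\in\irr(T(\chi))$, and your sketch never controls the index $[G(\F):T(\chi)]$, which must range over a set of boundedly many values for the union in the theorem to be bounded. The paper's solution is exactly the work you skip: Lemma \ref{the non-isotypic case} shows, via Kirillov's orbit method, the closedness of orbits of unipotent groups, the complexity bounds of Lemma \ref{cmp(Im phi)}, and the Lang--Weil rigidity statement (Corollary \ref{lang weil cor}), that $T(\chi)=K(\F)$ for a \emph{proper algebraic} subgroup $K\leq G$ of bounded complexity; the induction on dimension is then applied to $K$ (no further Clifford analysis inside $T$ is needed), and the index is controlled by Lemma \ref{bound on the options of cardinalities of algebraic groups}, which bounds the number of possible cardinalities of $\F$-points of bounded-complexity groups. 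Without an argument of this kind your induction never closes.

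Two secondary points. First, the theorem bounds the \emph{union} of $\dimirr(G(\F))$ over all $G$ in the class, so in the reductive case it is not enough that each $N(H(\F))$ is bounded: one must also bound the number of connected reductive groups of bounded dimension over $\F$ up to isomorphism (the paper's Lemma \ref{bound on the number of reductive groups}, via $H^1(\F,\Aut(G_0))$, $\Out$, and torsion in $\GL_n(\Z)$); your Lusztig-polynomial argument gestures at finitely many root data but does not address the finitely many $\F$-forms, nor does it produce a single set $S_{n,\F}$ valid for all such groups. Second, your bookkeeping lemma quietly assumes Levi decompositions and bounded-complexity re-embeddings of quotients; Levi decompositions can fail in positive characteristic, and the paper deliberately avoids them, working instead with $G/U$ through Lemma \ref{G/H(Fp)}, the tensor decomposition of Corollary \ref{the isotypic case}, and the abstract count of reductive groups, and bounding complexity only where it is actually needed (the unipotent radical, Lemma \ref{cmp(U) is small}, and images of morphisms, Lemma \ref{cmp(Im phi)}).
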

Notice that in the statement of the theorem we may omit the assumption on the dimension of $G$, since if $G \leq (\GL_n)_{\Fc}$ then $\dimG < n^2$. The assumption is made because the proof of the theorem will be using induction on $M_{\dim}$.

\begin{remark}
\label{recall cmp equiv. to deg}
Notice that by Lemma \ref{cmp equiv. to deg} we may replace here the word ``complexity'' with the word ``degree''.
\end{remark}

\begin{remark}
Notice that if we replace ``$\F$'' with ``$\F_{p^k}$'' in Theorem \ref{second big thm}, for some fixed positive integer $k$, then we may omit the assumption that $p$ is large enough, since if $p<M$, for some fixed number $M$, then $|G(\F_{{p^k}})|\leq |\GL_n(\F_{p^k})|\leq M^{k\cdot n^2}$ ,which implies that: $\dimirrG(\F)\subseteq \{1,...,M^{k\cdot n^2}\}$.
\end{remark}

Before going into the proof of Theorem \ref{second big thm}, we give a sketch, as follows:

\begin{empt}
\label{scketch of the proof of second big thm}
Sketch of proof of the theorem: Let $G\in \cA(n,M_{\dim},M_{\cmp},\F)$. By Lemma \ref{cmp(X^0)} and Lemma \ref{N(H)~<N(G)}, we may assume $G$ is connected. If $G$ is simple then we may use the bound on $N$ for quasi-simple groups of lie type, proved in $\cite{Reductive}$. From there we use the theory of reductive groups to handle the case where $G$ is reductive. If $G$ is unipotent, then we may use Kirillov's orbit method to show that each irreducible representation of $G(\F)$ has a dimension that is a power of $|\F|$. So the interesting case is when $G$ is neither reductive nor unipotent. If $U$ is the unipotent radical of $G$, then by Fong's first reduction(Theorem \ref{fong's first reduction}), any representation $\rho$ of $G(\F)$ is either isotypic (when restricted to $U(\F)$), or induction of an irreducible character of some $T(\chi)$ (when $\chi$ is some irreducible character of $U(\F)$, and $T(\chi)$ is some proper subgroup of $G(\F)$). In the first case, we will show that we can obtain $\rho$ from representations of $G/U(\F)$ and $U(\F)$. In the second case, using Kirillov's orbit method, we will show that $T(\chi)$ is the $\F$-points of a proper algebraic subgroup of $G$ with bounded complexity. Hence, by induction, we can get what we want. 
\end{empt}

\subsection{Some lemmas}
Before we tackle Theorem \ref{second big thm}, we need to prove some lemmas: \\ \\
Note that we are dealing in the proof with $\F$-points of a quotient of algebraic groups. Therefore, the following lemma will be useful:
\begin{lemma}[{\cite[Theorem 12.3.4 \& Example 12.3.5(6)]{Springer}}]
\label{G/H(Fp)}
Let $G,H$ be two linear algebraic groups defined over a finite field $\F$ s.t. $H$ is a connected subgroup of $G$. Then we have: $(G/H)(\F) = G(\F)/H(\F)$ that is, the the natural map: $\varphi:G(\F)/H(\F)\to (G/H)(\F)$ is bijective.
\end{lemma}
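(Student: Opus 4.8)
The plan is to reduce the statement to the Lang--Steinberg theorem on connected linear algebraic groups over finite fields. Write $q=|\F|$, fix an algebraic closure $\Fc$ of $\F$, and let $F$ denote the $q$-power Frobenius endomorphism of $G_{\Fc}$ (and the induced one on $(G/H)_{\Fc}$), so that for any $\F$-variety $X$ one has $X(\F)=X(\Fc)^F$, the set of $F$-fixed points. Let $\pi\colon G\to G/H$ be the quotient morphism; it is defined over $\F$, and it is surjective on $\Fc$-points (a standard property of quotients of linear algebraic groups, available from the same reference).

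First I would dispose of injectivity of $\varphi$, which is elementary: if $g_1,g_2\in G(\F)$ satisfy $\pi(g_1)=\pi(g_2)$, then $g_1^{-1}g_2\in H$, and since $g_1^{-1}g_2\in G(\F)$ as well, we get $g_1^{-1}g_2\in H\cap G(\Fc)^F=H(\F)$, so $g_1H(\F)=g_2H(\F)$.

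The substance is surjectivity. Given $x\in (G/H)(\F)$, choose $g\in G(\Fc)$ with $\pi(g)=x$. Since $\pi$ is defined over $\F$ and $x$ is $F$-fixed, $\pi(F(g))=F(\pi(g))=F(x)=x=\pi(g)$, hence $g^{-1}F(g)\in H$. Now invoke Lang--Steinberg: because $H$ is connected, the map $h\mapsto h^{-1}F(h)$ is surjective on $H(\Fc)$, so there is $h\in H(\Fc)$ with $h^{-1}F(h)=g^{-1}F(g)$. Put $g':=gh^{-1}$. Then $F(g')=F(g)F(h)^{-1}=g\,(g^{-1}F(g))\,F(h)^{-1}=g\,(h^{-1}F(h))\,F(h)^{-1}=gh^{-1}=g'$, so $g'\in G(\Fc)^F=G(\F)$; and since $h\in H$ we have $\pi(g')=gh^{-1}H=gH=\pi(g)=x$. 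Therefore $x$ lies in the image of $\varphi$, which settles surjectivity.

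The main obstacle is precisely the input from the Lang--Steinberg theorem: connectedness of $H$ is used there and nowhere else, and the conclusion genuinely fails for disconnected $H$ (already for $G$ trivial and $H$ finite). Everything surrounding it is formal: the universal property of the quotient, compatibility of $\pi$ with Frobenius, and the surjectivity of $\pi$ on geometric points. I would also remark that one does not need $G/H$ to be affine here — it suffices that $G/H$ is an $\F$-variety with an $\F$-defined quotient morphism from $G$, which is part of the cited structure theory.
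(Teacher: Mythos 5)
Your proof is correct: injectivity is the elementary coset argument, and surjectivity is exactly the Lang--Steinberg twisting argument, with connectedness of $H$ entering only there. The paper offers no proof of its own — it cites Springer (Theorem 12.3.4 and Example 12.3.5(6)), whose argument is the same one in cohomological guise, namely the vanishing of $H^1(\F,H)$ for connected $H$ by Lang's theorem — so your write-up is essentially the standard proof behind the citation and nothing is missing.
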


Since we are dealing here with algebraic groups of bounded complexity, it will be useful to prove that the unipotent radicals of such groups are of bounded complexity. For this we need the exponent and logarithm maps:

\begin{defn}
Let $n$ be a positive integer. Let $\K$ be an algebraicly closed field of characteristic $p$ s.t. $p=0$ or $p>n$.
\begin{enumerate}
    \item For a nilpotent matrix $A \in M_n(\K)$, define: $\exp(A) = \sum_{k=0}^n \frac{A^k}{k!}$.
    \item For a unipotent matrix $A \in \GL_n(\K)$, define: $\log(A) = -\sum_{k=1}^n \frac{(I-A)^k}{k}$.
\end{enumerate}
\end{defn}

\begin{remark}
\label{exp of conjugate}
Note that by definition we have for any matrix $A$ and an invertible matrix $B$: 
\begin{enumerate}
    \item If exp is defined on $A$ then $\exp(BAB^{-1})=B\exp(A)B^{-1}$.
    \item If log is defined on $A$ then $\log(BAB^{-1})=B\log(A)B^{-1}$.
\end{enumerate}
\end{remark}

\begin{lemma}
\label{log is iso}
Let $\K$ be an algebraically closed field of characteristic $p$ and let $n < p$ be a positive integer. Let $\g$ be a nilpotent lie sub-algebra of $M_n(\K)$. Then: $\exp(\g)$ is a unipotent group and $\exp: \g \rightarrow \exp(\g)$ is an isomorphism of affine varieties with $\log$ being its inverse. In particular, $\log$ sends unipotent matrices to nilpotent ones, and $\exp$ does the opposite.
\end{lemma}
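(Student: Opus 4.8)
The plan is to conjugate $\g$ into the strictly upper triangular matrices, where $\exp$ and $\log$ are honest polynomial maps, and then run the classical $\exp$--$\log$ correspondence while keeping track of denominators so that everything specializes to characteristic $p>n$. First I would observe that every element of $\g$ is a nilpotent matrix --- this is already forced by $\exp$ being defined on $\g$, which fixes the sense in which ``nilpotent'' is meant --- so by Engel's theorem there is $g\in\GL_n(\K)$ with $g\g g^{-1}\subseteq\uu_n$, the Lie algebra of strictly upper triangular $n\times n$ matrices. By Remark~\ref{exp of conjugate}, $\exp$ and $\log$ commute with conjugation by $g$, so it suffices to treat $\g\subseteq\uu_n$. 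Now any product of $n$ strictly upper triangular matrices vanishes, so $N^n=0$ for $N\in\uu_n$ and $(I-u)^n=0$ for $u$ in the upper unitriangular group $U_n$; hence the truncated series defining $\exp$ on $\uu_n$ and $\log$ on $U_n$ are polynomial maps of degree $\le n-1$ whose coefficients $1/k!$ and $1/k$ $(1\le k\le n-1)$ are all units in $\K$, and moreover $\uu_n$ is a nilpotent Lie algebra of class $\le n-1<p$.

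Next I would show that $\exp\colon\uu_n\to U_n$ and $\log\colon U_n\to\uu_n$ are mutually inverse isomorphisms of affine varieties. That $\exp(N)$ is unipotent and $\log(u)$ is nilpotent is immediate, since $\exp(N)-I$ and $\log(u)$ are polynomials without constant term in the nilpotent matrices $N$ and $I-u$ respectively --- this already gives the ``in particular'' clause of the lemma. The mutual inverse property follows from the formal power series identities $\log(\exp x)=x$ and $\exp(\log(1+y))=1+y$ over $\Q$: reducing them modulo terms of degree $\ge n$ introduces only denominators with prime factors at most $n-1$, hence units in $\K$, so these identities descend to $\K$ and may be evaluated on matrices in $\uu_n$, resp.\ in $U_n$, where all higher-degree terms vanish. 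Since $\g\subseteq\uu_n$ is a linear, hence Zariski closed, subvariety, its image $\exp(\g)$ is Zariski closed in $U_n$ and $\exp\colon\g\to\exp(\g)$ is an isomorphism of varieties with inverse $\log$; it then remains only to prove that $\exp(\g)$ is a subgroup of $U_n$.

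For the group structure: given $X\in\g$ we have $\exp(X)^{-1}=\exp(-X)\in\exp(\g)$ by the identity $e^Xe^{-X}=1$ (again only denominators with prime factors $\le n-1$ occur). Given $X,Y\in\g$, the product $\exp(X)\exp(Y)$ lies in $U_n$, hence equals $\exp(Z)$ with $Z:=\log(\exp(X)\exp(Y))\in\uu_n$. By the Baker--Campbell--Hausdorff formula (in Dynkin's explicit form), $Z$ is a sum of iterated Lie brackets of $X$ and $Y$; the series terminates since $\uu_n$, hence $\g$, has nilpotency class $\le n-1$, so only brackets of length $\le n-1$ occur, and the standard bounds on the Dynkin coefficients show that its degree-$m$ part has denominators with prime factors at most $m$, hence $p$-integral for $p>n-1$. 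Therefore $Z$ is a finite $\K$-linear combination of brackets of $X$ and $Y$, which lies in the Lie subalgebra $\g$, so $\exp(X)\exp(Y)=\exp(Z)\in\exp(\g)$. Hence $\exp(\g)$ is a Zariski closed subgroup of $U_n$ consisting of unipotent elements, i.e.\ a unipotent algebraic group, which together with the previous paragraph completes the proof.

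The step I expect to be the main obstacle is exactly this specialization of the formal identities --- above all Baker--Campbell--Hausdorff --- from characteristic $0$ to characteristic $p$; everything rests on the single point that after conjugation $\g$ lies inside $\uu_n$, whose nilpotency class is $<n<p$, so all the relevant series truncate in degree $<p$ and only denominators with prime factors $<p$ occur. As an alternative to invoking BCH for product-closure one could induct on $\dim\g$ via a codimension-one ideal $\h\supseteq[\g,\g]$, using the relation $\exp(Z)\exp(X)\exp(-Z)=\exp(\Ad(\exp Z)X)$ to see that $\exp(\K Z)$ normalizes $\exp(\h)$; but confirming that $\Ad(\exp Z)$ acts on $\uu_n$ as the exponential of the operator $\mathrm{ad}\,Z$ (which is nilpotent on $\uu_n$) needs the same denominator analysis, so the BCH route seems cleanest.
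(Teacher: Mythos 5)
Your proposal is correct and follows essentially the same route as the paper's proof: the Baker--Campbell--Hausdorff formula gives that $\exp(\g)$ is a group, the truncated $\exp$ and $\log$ are mutually inverse polynomial maps, and closedness of $\exp(\g)$ comes from pulling back the (linear) equations cutting out $\g$ along $\log$. The only difference is one of detail: you spell out the Engel/upper-triangular reduction and the characteristic-$p$ bookkeeping (Dynkin denominators with prime factors at most $n-1$, truncation in degree $<p$) that the paper leaves as ``well-known'' and ``easy to check.''
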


\begin{proof}
The fact that $\exp(\g)$ is a group follows from the well-known Baker Campbell Hausdorff formula. It is easy to check that log,exp are inverses of each other. Now $\exp(\g)$ is closed since if $\g$ is the zero set of the linear polynomials $f_1(x),...,f_n(x)$, then $\exp(\g)$ is the zero set of: $\{f_i(\log(x)) |i=1,..,n\}$. 
\end{proof}
\begin{remark}
Let $\F$ be a perfect sub-field of $\K$. Notice that in the notations of the lemma, since $\exp, \log$ are bijections that are defined over $\F$, we get that $\exp(\g(\F)) = \exp(\g)(\F)$ and the same is true for the $\log$ map.
\end{remark}
 We can now bound the complexity of $U$:
\begin{lemma}
\label{cmp(U) is small}
There is an increasing function $C_{cmpU} : \N \rightarrow \N$ s.t.. the following holds:
Let $n$ be a positive integer. Let $\K$ be an algebraically closed field with characteristic $p>n$. Let $G$ be a connected algebraic subgroup of $(\GL_n)_{\K}$ with complexity $M$. Let $U$ be the unipotent radical of $G$. Then $U$ has complexity at most $C_{\cmp U}(M)$.
\end{lemma}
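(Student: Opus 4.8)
The plan is to extract the unipotent radical $U$ of $G$ as (the Zariski closure of) the image of an explicit morphism with bounded complexity, and then invoke Lemma \ref{cmp(Im phi)}. First I would pass to the Lie algebra. Since $G \leq (\GL_n)_\K$ has complexity $M$, it has dimension $d \leq n^2$ (so $d$ is bounded), and the tangent space $\g = \mathrm{Lie}(G) \subseteq M_n(\K) \cong \A^{n^2}$ is a linear subspace cut out by the partial derivatives of the $\leq M$ defining polynomials of $G$ evaluated at the identity; hence $\g$ has complexity bounded in terms of $M$ and $n$. The key structural fact I would use is that, because $p > n \geq \dim G$ (in particular $p$ does not divide the relevant torsion), the unipotent radical $U$ is a connected unipotent normal subgroup, and its Lie algebra $\uu = \mathrm{Lie}(U)$ is the nilradical of $\g$ — the largest nilpotent ideal — and moreover, by Lemma \ref{log is iso}, $\exp\colon \uu \to U$ is an isomorphism of varieties whose complexity is controlled (it is the graph of $\exp$ restricted to a linear subspace of nilpotent matrices, a polynomial map of degree $< n$). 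So it suffices to bound the complexity of $\uu$ as a linear subspace of $M_n(\K)$.

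The heart of the argument is therefore: bound the complexity of the nilradical $\uu$ of $\g$ in terms of $n$ and $\cmp(\g)$. Here I would argue that $\uu$ admits a description by equations of bounded degree in bounded many variables, uniformly in $\K$ and even in $p$ (for $p > n$). One clean route: the nilradical is the intersection of the kernels of the trace forms, i.e. $\uu = \{x \in \g : \mathrm{tr}(\mathrm{ad}_\g(x)\,\mathrm{ad}_\g(y)) = 0 \text{ for all } y \in \g\}$ fails in small characteristic but, more robustly, $\uu = \{x \in \g : \mathrm{ad}_\g(x) \text{ is nilpotent}\} \cap \{x : [x,\g] \text{ consists of ad-nilpotents}\}$ type characterizations — since $\dim \g \leq n^2$ is bounded, "$\mathrm{ad}(x)$ nilpotent" is the single polynomial condition $\mathrm{ad}(x)^{n^2} = 0$, of bounded degree in bounded variables. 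More carefully, since $\g \subseteq M_n$, an element $x \in \g$ lies in $\uu$ iff $x$ is a nilpotent matrix and $x$ is in the ideal generated by nilpotents; one can write $\uu$ as the largest $\g$-submodule of $\g$ on which $\g$ acts nilpotently, and such a "largest subspace with a polynomial closure property inside a bounded-dimensional space" is again cut out by equations whose degree is bounded in terms of $n$. I would make this precise by an elimination/quantifier-argument: the predicate "$W$ is the nilradical of $\g$" is a first-order condition of bounded complexity on the Grassmannian of subspaces of $M_n$, so by effective elimination of quantifiers (bounds depending only on $n$ and $\cmp(\g)$, valid uniformly for $p > n$) one gets defining equations of bounded degree; alternatively, one fixes $\dim\uu$ (finitely many cases) and writes $\uu$ as the image of a bounded-complexity morphism parametrizing such ideals, then applies Lemma \ref{cmp(Im phi)}.

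Having bounded $\cmp(\uu) \leq C'(n, M)$, the final step is short: by Lemma \ref{log is iso} and Remark \ref{exp of conjugate}, $U = \exp(\uu)$ is the image of $\uu$ under a morphism of complexity bounded in terms of $n$ (the truncated exponential), so Lemma \ref{cmp(Im phi)} yields $\cmp(U) \leq C_{coi}(\max(\cmp(\uu), \text{cmp of }\exp)) =: C_{\cmp U}(M)$, and we may enlarge $C_{\cmp U}$ to be increasing. The main obstacle I anticipate is the uniformity in $p$ in the middle step: I must ensure that the equations describing the nilradical, and the constant coming from the elimination-of-quantifiers / Lemma \ref{cmp(Im phi)} argument, do not degrade as $p$ grows — this is exactly why the hypothesis is $p > n$ rather than "$p$ large", and why one works with the ad-nilpotency characterization (which involves only the fixed exponent $n^2$ and $1/k!$ for $k < n$, all well-defined and nonzero for $p > n$) rather than with the Killing form, whose non-degeneracy genuinely depends on the characteristic.
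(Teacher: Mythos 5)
There is a genuine gap, and it sits exactly at the step you call the heart of the argument. Your plan hinges on the identity $U=\exp(\uu)$ with $\uu=\mathrm{Lie}(U)$ (equivalently, $\log(U)=\mathrm{Lie}(U)$), justified by an appeal to Lemma \ref{log is iso}. But Lemma \ref{log is iso} only says that the exponential of a nilpotent Lie subalgebra is a closed unipotent group isomorphic to it as a variety; it does not say that a unipotent group is the exponential of its tangent algebra, and under the stated hypothesis $p>n$ this is simply false. For $p>3$ take $U=\Big\{\begin{psmallmatrix} 1 & x & y\\ 0 & 1 & x^p\\ 0 & 0 & 1 \end{psmallmatrix} : x,y\in\K\Big\}\leq \GL_3$ (the example in the remark right after this lemma in the paper, applied with $G=U$): here $\mathrm{Lie}(U)$ is the span of the elementary matrices $E_{12},E_{13}$, and $\exp(\mathrm{Lie}(U))\neq U$. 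The statement $\log(U)=\mathrm{Lie}(U)$ is only recovered later (Lemma \ref{U=exp(g)}) under the hypothesis $p>C_{lila}(n,\cmp(U))$, i.e.\ it presupposes a bound on $\cmp(U)$ --- the very thing this lemma is supposed to provide --- whereas the present lemma must hold for all $p>n$. A second problem is your identification of $\mathrm{Lie}(U)$ with a ``nilradical'' cut out by explicit bounded-degree conditions: the characterizations you float are incorrect already in characteristic zero (for a torus every $\mathrm{ad}(x)$ is nilpotent, so ad-nilpotency does not cut out $0$; for semisimple $G$ the nilpotent matrices generate $\g$ as an ideal, so ``nilpotent and in the ideal generated by nilpotents'' fails too), and the effective quantifier-elimination fallback is not worked out and in any case does not repair the first issue. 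Note also that the step you labour over --- bounding $\cmp(\uu)$ --- is actually free: any Lie subalgebra of $M_n(\K)$ is a linear subspace of $\A^{n^2}$, hence has complexity at most roughly $n^2$, so no nilradical description is needed for that.

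For comparison, the paper's proof sidesteps the tangent algebra entirely. It takes $\uu$ to be the Lie algebra \emph{generated by} $\log(U)$ (a linear subspace, hence of bounded complexity for free, and nilpotent since $U$ is conjugate into $\UU_n$), and it does not claim $U=\exp(\uu)$; instead it proves $U=(\exp(\uu)\cap G)^0$: by Remark \ref{exp of conjugate}, $G$ normalizes $\uu$, so $\exp(\uu)\cap G$ is a closed, normal, unipotent subgroup of $G$ containing $U$, and maximality of the unipotent radical forces equality of identity components. The complexity bound then follows from Lemma \ref{cmp(Im phi)} applied to $\exp$ on $\uu$, adding the at most $M$ defining equations of $G$ for the intersection, and Lemma \ref{cmp(X^0)} for the identity component. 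If you replace your middle step by this ``sandwich $U$ inside $\exp(\uu)\cap G$ and take the identity component'' argument, your first and last steps do go through.
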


\begin{proof}
Since $U$ is unipotent, it is conjugated to a subgroup of the unipotent upper triangular group: $\UU_n(\K)$ (see proposition 2.4.12 in \cite{Springer}). Let $A$ be the matrix that conjugates $U$, meaning $U$ is a subgroup of $A\UU_n(\K)A^{-1}$. Let $\uu$ be the Lie algebra generated by $\log(U)$. It is clear that $\uu$ is a sub-algebra of $A\log(\UU_n(\K))A^{-1}$, meaning it is nilpotent. Now, we claim that: $U = (\exp(\uu)\cap G)^0$. Indeed, since log, exp are inverses of each other, we have that $U \subseteq (\exp(\uu)\cap G)$. Since $U$ is connected, we have: $U \subseteq (\exp(\uu)\cap G)^0$. By Remark \ref{exp of conjugate}, we have that $G$ preserves $\uu$  (when acting on it by conjugation) and so we get that $\exp(\uu)\cap G$ is a normal subgroup of $G$. By Lemma \ref{log is iso}, it is also unipotent and closed. By maximality of $U$ (as a connected, normal and unipotent subgroup), we get our desired equality. Now the claim is an immediate consequence of Lemma \ref{cmp(Im phi)} (for the exponent map) and Lemma \ref{cmp(X^0)}.  
\end{proof} 

\begin{remark}
Notice that in the proof we took the Lie algebra generated by $\log(U)$. It is well-known that in characteristic zero, $\log(U)$ is already a Lie algebra for any connected unipotent group $U$. But, in positive characteristic, it may not be the case. For example, if $\K$ is an algebraicly closed field of characteristic $p>3$, then we can take $U:=\Big\{\begin{psmallmatrix}
1 & x & y\\
0 & 1 & x^p\\
0 & 0 & 1
\end{psmallmatrix}$ s.t. $x,y\in \K\Big\}$. Then $\log(U)= \Big\{\begin{psmallmatrix}
0 & x & y\\
0 & 0 & x^p\\
0 & 0 & 0
\end{psmallmatrix}$ s.t. $x,y\in \K\Big\}$, which is not even a vector space over $\K$. However, in our case, we have a bound on the complexity of $U$, so we can in fact prove that for large enough $p$, $\log(U)$ is indeed a Lie algebra. The following lemma does that:
\end{remark}

\begin{lemma}
\label{U=exp(g)}
%lila - log is Lie algebra
There is an increasing function $C_{lila} : \N^2 \rightarrow \N$ s.t. for any positive integers $n,M$, an algebraically closed field $\K$ with characteristic $p>C_{lila}(n,M)$ and a unipotent subgroup $U$ of $(\GL_n)_{\K}$, with complexity at most $M$, we have that $\log(U)$ is a Lie algebra over $\K$ (which by Lemma \ref{log is iso} means that $U$ is an exponent of a Lie algebra and that it is connected). Moreover, if $U$ is defined over a perfect sub-field $\F$, then $\log(U)$ is also defined over $\F$.
\end{lemma}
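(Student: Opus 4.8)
The plan is to reduce the statement to its characteristic-$0$ counterpart by a bounded-complexity ultraproduct argument; the complexity hypothesis enters precisely to keep the groups $U$ in a uniformly (first-order) definable family. First note that $\log(U)$ is Zariski closed in $M_n(\K)$: on the variety of unipotent matrices $\log$ is an isomorphism onto the variety of nilpotent matrices with inverse $\exp$ (cf. Lemma \ref{log is iso}), and $U$ is closed, so $\log(U)$ is closed. Moreover $\log$ is a single polynomial map whose complexity is bounded in terms of $n$ alone — on unipotent matrices it is given by a polynomial of degree $\le n$ with coefficients $-1/k$, $k\le n$, which make sense once $p>n$ — so Lemma \ref{cmp(Im phi)} (with Lemma \ref{cmp(X^0)}) bounds $\cmp(\log U)$ in terms of $n$ and $M$. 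Consequently the assertion ``$\log(U)$ is a Lie $\K$-subalgebra of $M_n(\K)$'' (i.e.\ $0\in\log U$ and $\log U$ is closed under addition, scalar multiplication and the bracket) is, uniformly in $\K$, a first-order property of the at most $M$ polynomials of degree $\le M$ that define $U$.

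Suppose the lemma fails. Then there are a sequence of primes $p_i\to\infty$, algebraically closed fields $\K_i$ of characteristic $p_i$, and unipotent subgroups $U_i\le(\GL_n)_{\K_i}$ of complexity $\le M$ for which $\log(U_i)$ is not a Lie $\K_i$-subalgebra of $M_n(\K_i)$. For each $i$ fix $\le M$ polynomials of degree $\le M$ cutting out $U_i$; their coefficient vectors lie in a fixed affine space $\A^N$ with $N=N(n,M)$. Fix a non-principal ultrafilter $\mathcal U$ on the index set, put $\K:=\prod_i\K_i/\mathcal U$ (an algebraically closed field of characteristic $0$, since each prime vanishes in only finitely many of the $\K_i$), and let $U\subseteq\GL_n(\K)$ be the zero set of the polynomials whose coefficient vector is the ultraproduct of the chosen ones. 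By {\L}o\'{s}'s theorem $U=\prod_i U_i/\mathcal U$ is a Zariski-closed subgroup of $\GL_n(\K)$ all of whose elements are unipotent, and $\log(U)=\prod_i\log(U_i)/\mathcal U$ (images of definable sets under the fixed map $\log$ commute with ultraproducts). Over a field of characteristic $0$, a Zariski-closed subgroup of $\GL_n$ consisting of unipotent elements is a connected unipotent algebraic group, for which $\exp\colon\mathrm{Lie}(U)\to U$ is an isomorphism of varieties with inverse $\log$; hence $\log(U)=\mathrm{Lie}(U)$ \emph{is} a Lie $\K$-subalgebra of $M_n(\K)$. But ``$\log(U_i)$ is not a Lie $\K_i$-subalgebra'' is a first-order sentence in the coefficient parameters that holds for every $i$, so it holds in the ultraproduct, contradicting the previous sentence. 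This produces a constant $C_{lila}(n,M)$; enlarging it we may take it increasing and $>n$.

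For the last assertion, if $U$ is defined over a perfect subfield $\F\subseteq\K$ then, since $\log$ is defined over the prime field, $\log(U)$ is a constructible set defined over $\F$, and being closed it is an $\F$-subvariety of $M_n$; a linear subspace cut out by $\F$-rational equations has a basis over $\F$ (equivalently, by the remark after Lemma \ref{log is iso}, $\log(U(\F))=\log(U)(\F)$ already spans $\log(U)$ over $\K$), so $\log(U)$ is defined over $\F$ as a Lie algebra.

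The main obstacle is the transfer step, and in particular the role of the complexity bound there: it forces the $U_i$ to vary over a \emph{fixed} finite-dimensional family of definable sets, so that their ultraproduct is again a single subgroup variety of $\GL_n$ and {\L}o\'{s}'s theorem applies to the sentence ``$\log$ of this group is a Lie algebra''. Without the bound the argument must break, and indeed the pathological examples — such as $\begin{psmallmatrix}1&x&y\\0&1&x^p\\0&0&1\end{psmallmatrix}$, whose $\log$ is not even a vector space — have complexity growing with $p$. The remaining points are routine but need care: that ``$\log(U_i)$ is a Lie subalgebra'' is genuinely first-order in boundedly many parameters; that the fixed rational-coefficient map $\log$ (legitimate once $p_i>n$) commutes with the ultraproduct; and that an ultraproduct of algebraically closed fields of growing characteristic is algebraically closed of characteristic $0$. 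The bound obtained this way is ineffective, which is all the statement requires.
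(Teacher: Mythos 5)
Your proposal is correct and follows essentially the same route as the paper: a proof by contradiction that takes an ultraproduct/ultralimit of a hypothetical sequence of bounded-complexity counterexamples over fields of growing characteristic, identifies the limit as a unipotent subgroup of $(\GL_n)_{\K}$ over an algebraically closed field of characteristic zero, and invokes the known characteristic-zero fact that $\log$ of such a group is a Lie algebra. The only cosmetic difference is that the paper tracks explicit witnesses $a_k,b_k$ to the failure of additive closure (after conjugating into $\UU_n$), whereas you transfer the whole first-order sentence via {\L}o\'{s}'s theorem; both are the same argument in substance.
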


\begin{proof}
The second part of this lemma is clear (assuming the first part). For the first part, we will prove that $\log(U)$ is closed under addition for large enough $p$ (the proof of the other properties of a Lie algebra is similar). Assume the contrary. Then, there is an increasing sequence of primes: $p_k$, algebraically closed fields of characteristic $p_k$: $\K_k$ and unipotent groups defined over them: $U_k$, with the conditions mentioned above, s.t. $\log(U_k)$ is not closed under addition for each $k$. That means there exist $a_k,b_k \in U_k$ s.t. \[(*)\exp(\log(a_k)+\log(b_k))\not\in U_k.\] Since each $U_k$ is conjugated to a subgroup of $\UU_n(\K_k)$, we can assume W.L.O.G that $U_k$ is a subgroup of $\UU_k(\K_k)$. Now fix some non-principal ultra-filter on the positive integers. Let $\K,U$, be the ultra-limits\footnote{For the definition and information about ultra-filters and ultra-limits, see \cite{ultralimit}.} of $\K_k,U_k$ respectively (with respect to the ultra-filter we chose). Since each $\K_k$ is algebraically closed and $p_k\rightarrow\infty$, it is not hard to see that $\K$ is an algebraically closed field of characteristic zero. Since for each $k$, the complexity of $U_k$ is bounded by $M$, it is also not hard to conclude that $U$ is an algebraic variety over $\K$ (see for example Lemma 2 in \cite{ultralimit}). Since each $U_k$ is a subgroup of $\UU_n(\K_k)$, we can also think of $U$ as a subgroup of $\UU_n(\K)$, meaning it is unipotent. But this claim is well-known for characteristic zero (see for example 
proposition 15.31 in \cite{Milne}), meaning $\log(U)$ is a Lie algebra. However, by $(*)$ we have that if we define $a,b$ to be the ultra-limits of $a_k,b_k$ respectively, then $\exp(\log(a)+\log(b))\not\in U$. A contradiction.        
\end{proof}

We now have a direct corollary of that:
\begin{cor}
\label{cardinality of a unipotent group is a power of |F|}
Let $n,M$ be positive integers. Let $\F$ be a finite field of characteristic $p>C_{lila}(n,M)$. Let $U$ be a unipotent subgroup of $(\GL_n)_{\Fc}$ that is defined over $\F$. Assume that $U$ has complexity at most $M$. Then $|U(\F)|=|\F|^k$ for some integer $0\leq k \leq n^2$.
\end{cor}

\begin{proof}
By the previous lemma, $\log(U)$ is a Lie algebra that is defined over $\F$. That means that $\log(U(\F))$ is a vector space over $\F$ of dimension $\leq n^2$. Since $\log$ is injective, we get the claim.   
\end{proof}

We would now like to state some basic lemmas on algebraic groups:
\begin{lemma}[{\cite[Lemma 3.1.4]{BM}}]
\label{bound on coker}
Let $\phi : G \to H$ be an isogeny of algebraic groups defined over a finite field $\F$. Let $K$ be its kernel. Then:
$$[H(\F):\phi(G(\F))]\leq |K(\F)|.$$
\end{lemma}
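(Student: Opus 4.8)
The plan is to reduce to the case where $G$ is connected, and there to produce an explicit group homomorphism out of $H(\F)$ whose kernel is $\phi(G(\F))$ and whose target has order exactly $|K(\F)|$. Let $F$ denote the Frobenius endomorphism of $\Fc$ relative to $\F$, so that $\F=\Fc^{F}$ and $A(\F)=A(\Fc)^{F}$ for every algebraic group $A$ defined over $\F$. First I would record the key structural input: when $G$ is connected, the finite normal subgroup $K=\ker\phi$ is central in $G$, because the conjugation action $G\to\Aut(K)$ is a morphism from a connected group to a finite group (scheme) and is therefore trivial. In particular $K(\Fc)$ is an \emph{abelian} group carrying an action of $F$.

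Next, for connected $G$, I would define a map $c\colon H(\F)\to K(\Fc)/L(K(\Fc))$, where $L\colon K(\Fc)\to K(\Fc)$ is the Lang-type map $L(k)=k^{-1}F(k)$. Given $h\in H(\F)$, choose any $g\in G(\Fc)$ with $\phi(g)=h$ (possible since an isogeny is surjective on $\Fc$-points); then $\phi(g^{-1}F(g))=h^{-1}F(h)=1$, so $g^{-1}F(g)\in K(\Fc)$, and I set $c(h):=[\,g^{-1}F(g)\,]$. Replacing $g$ by $gk$ with $k\in K(\Fc)$ multiplies $g^{-1}F(g)$ by $k^{-1}(g^{-1}F(g))F(k)=(g^{-1}F(g))\cdot L(k)$, using centrality of $K$, so $c$ is well defined; the same centrality gives $c(h_1h_2)=(g_1g_2)^{-1}F(g_1g_2)=g_2^{-1}(g_1^{-1}F(g_1))F(g_2)=(g_1^{-1}F(g_1))(g_2^{-1}F(g_2))$, so $c$ is a homomorphism.

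Then I would identify the kernel and compute the target. One has $c(h)=0$ iff $g^{-1}F(g)=L(k)$ for some $k\in K(\Fc)$, iff $gk^{-1}\in G(\Fc)^{F}=G(\F)$ with $\phi(gk^{-1})=h$; conversely, if $h=\phi(g')$ with $g'\in G(\F)$ then taking $g=g'$ yields $c(h)=0$. Hence $\ker c=\phi(G(\F))$, so $[H(\F):\phi(G(\F))]=|\Im c|\le |K(\Fc)/L(K(\Fc))|$. Finally, since $K$ is abelian, $L$ is a group homomorphism with $\ker L=K(\Fc)^{F}=K(\F)$, so $|K(\Fc)/L(K(\Fc))|=[K(\Fc):\Im L]=|\ker L|=|K(\F)|$, which gives the claim. (This is just the computation $H^1(\Gal(\Fc/\F),K(\Fc))\cong K(\Fc)_F$ together with $|K(\Fc)_F|=|K(\Fc)^{F}|$, done by hand.)

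For general $G$ I would apply the connected case to $\phi|_{G^{0}}\colon G^{0}\to H^{0}$ — this is again an isogeny, since the image of the connected group $G^{0}$ is a finite-index connected subgroup of $H$, hence equals $H^{0}$ — with kernel $K\cap G^{0}$, and then pass to $G$ by a diagram chase on the component groups, using Lemma \ref{G/H(Fp)} to compute $\F$-points of quotients. The main obstacle I expect is precisely this last bookkeeping (controlling the component-group contribution), together with verifying centrality of $K$ carefully if $K$ is permitted to be a non-\'etale finite group scheme; the connected-case argument itself is short and, notably, uses only surjectivity of $\phi$ on $\Fc$-points and finiteness of $K$, not the full Lang--Steinberg theorem.
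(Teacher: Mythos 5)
Your connected-case argument is correct, and since the paper gives no proof of this lemma (it is quoted verbatim from \cite[Lemma 3.1.4]{BM}), the only comparison available is with the standard argument of the cited source -- which is exactly what you have written out by hand: for connected $G$ the kernel is central on $\Fc$-points, the class of $g^{-1}F(g)$ in $K(\Fc)/L(K(\Fc))$ defines a homomorphism out of $H(\F)$ with kernel $\phi(G(\F))$, and $|K(\Fc)/L(K(\Fc))|=|\ker L|=|K(\F)|$ because $K(\Fc)$ is finite abelian. The scheme-theoretic worry you flag about a non-\'etale kernel is harmless, since you only ever use $K(\Fc)$, which is finite and is centralized by the connected group $G(\Fc)$, and surjectivity of $\phi$ on $\Fc$-points holds for any isogeny. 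Note also that every application of the lemma in this paper (to the universal cover $\Gtt\to G'$ and to $\Gtt\times R(G)\to G$) has connected source, so the connected case is all that is actually needed.

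The genuine gap is exactly where you suspected: the reduction of the disconnected case. After treating $\phi|_{G^0}\colon G^0\to H^0$, the component-group layer $\pi_0(G)\to\pi_0(H)$ is itself an instance of the lemma for finite \'etale group schemes whose kernel $K/(K\cap G^0)$ may be nonabelian, and there your cohomological mechanism is no longer formal: for a finite nonabelian group $A$ with Frobenius action, $H^1(\hat{\Z},A)$ is the set of $F$-twisted conjugacy classes, whose cardinality can strictly exceed $|A^F|$ (take $A=S_{10}$ with $F$ conjugation by a $10$-cycle: $42$ twisted classes versus $|A^F|=10$), so \quote{cokernel bounded by fixed points of the kernel} needs a different argument in that layer. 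Moreover, even granting bounds for both layers, the chase only yields $[H(\F):\phi(G(\F))]\le|(K\cap G^0)(\F)|\cdot|(K/(K\cap G^0))(\F)|$, and this product can be strictly larger than $|K(\F)|$, since the quotient $K/(K\cap G^0)$ may have more rational points than the image of $K(\F)$ accounts for. If you want the stated generality, a cleaner route drops cohomology altogether: all groups of $\F$-points here are finite and $\ker(\phi)\cap G(\F)=K(\F)$, so $[H(\F):\phi(G(\F))]=|H(\F)|\,|K(\F)|/|G(\F)|$ and the lemma is equivalent to $|H(\F)|\le|G(\F)|$; the latter follows by applying $g\mapsto g^{-1}F(g)$ to $\phi^{-1}(H(\F))\subseteq G(\Fc)$, a map whose nonempty fibers are cosets of $G(\F)$ and whose image lies in the finite set $K(\Fc)$, giving $|H(\F)|\cdot|K(\Fc)|\le|G(\F)|\cdot|K(\Fc)|$. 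That argument uses neither connectedness, nor centrality, nor Lang's theorem, and it subsumes your component-group bookkeeping.
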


\begin{lemma}
\label{radical}
Let $\K$ be an algebraically closed field. Let $G$ be a connected reductive group defined over $\K$. Let $G'$ be the commutator group of $G$ and let $R(G)$ be the radical of $G$. Then we have:
\begin{enumerate}
    \item $R(G)$ is a central torus for $G$. 
    \item $G=R(G)\cdot G'$
    \item $R(G)\cap G'$ is finite.
\end{enumerate}
\end{lemma}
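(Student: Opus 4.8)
The statement is the standard structure theorem for connected reductive groups, so the plan is to assemble it from the facts that the radical $R(G)$ is, by definition, the maximal connected solvable normal subgroup of $G$, together with the rigidity of tori. First I would prove (1): since $G$ is reductive, its unipotent radical is trivial, so $R(G)$ is a connected solvable normal subgroup with trivial unipotent radical; by the Lie–Kolchin-type structure of connected solvable groups, $R(G)$ is then a torus. For centrality, $G$ acts on $R(G)$ by conjugation, giving a morphism $G \to \Aut(R(G))$; since $R(G)$ is a torus, $\Aut(R(G))$ is discrete (isomorphic to $\GL_m(\Z)$ where $m = \dim R(G)$), so the image of the connected group $G$ is trivial, i.e. $G$ centralizes $R(G)$. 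Hence $R(G)$ is a central torus.

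Next, for (2), the plan is to use that $G/R(G)$ is connected, reductive, and \emph{semisimple} (its radical is trivial by maximality of $R(G)$), hence equal to its own commutator subgroup. Therefore the image of $G'$ in $G/R(G)$ is all of $G/R(G)$, which gives $G = R(G)\cdot G'$. For (3), $R(G)\cap G'$ is a closed subgroup of the torus $R(G)$, so it is a diagonalizable group; I would argue it is finite by a dimension count: $\dim G = \dim R(G) + \dim G' - \dim(R(G)\cap G')$ (using $G = R(G)G'$ and that $R(G)$ is central, so the product map $R(G)\times G' \to G$ is a homomorphism with kernel $\{(z,z^{-1}) : z \in R(G)\cap G'\}$), while on the other hand the quotient $G/G' \cong R(G)/(R(G)\cap G')$ is a torus of dimension $\dim R(G) - \dim(R(G)\cap G')$; since $G'$ is semisimple its image $G/G' $ in the abelianization is... more directly, $G' $ is semisimple hence $\dim(G/G') = \dim G - \dim G'$, and comparing with $\dim(G/G') = \dim(R(G)/(R(G)\cap G'))$ forces $\dim(R(G)\cap G') = 0$, so being a diagonalizable group of dimension zero it is finite.

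The main obstacle, such as it is, lies in part (1): one must invoke that a connected solvable linear algebraic group with trivial unipotent radical is a torus, and that the automorphism group scheme of a torus is étale (discrete), so that a connected group acting on a torus by automorphisms acts trivially. Both are classical, and over an algebraically closed field (as assumed here) they are entirely standard; for the discreteness of $\Aut$ of a torus one can cite the identification of tori with their cocharacter lattices. Parts (2) and (3) are then routine once (1) is in place: the only care needed is to note that the commutator subgroup $G'$ of a connected group is connected and closed, and that $G/R(G)$ being semisimple makes it perfect, which is where the equality $G = R(G)G'$ comes from. One could alternatively cite these as standard facts from the theory of reductive groups (e.g. Springer or Milne) rather than re-proving them, which may be the cleaner route given the expository aims of the section.
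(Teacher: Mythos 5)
The paper does not reprove these facts at all: it simply cites Springer (Lemma 7.3.1(i) for part (1), Corollary 8.1.6(i) for part (2), Lemma 7.3.1(ii) for part (3)), which is the route you yourself suggest at the end and is the cleaner choice here. Your sketches of (1) and (2) are fine modulo the standard black boxes you name (a connected solvable group with no unipotent part is a torus, rigidity of tori, and perfectness of semisimple groups).

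Part (3), however, has a genuine gap: the dimension count is circular. From the surjection $R(G)\times G'\to G$ with kernel isomorphic to $R(G)\cap G'$ you get $\dim G=\dim R(G)+\dim G'-\dim(R(G)\cap G')$, and from $G=R(G)G'$ you get $G/G'\iso R(G)/(R(G)\cap G')$, i.e.\ $\dim G-\dim G'=\dim R(G)-\dim(R(G)\cap G')$. These are the \emph{same} identity, so comparing them forces nothing; in particular nothing yields $\dim(R(G)\cap G')=0$. Moreover, the only place you invoke semisimplicity of $G'$ is to assert $\dim(G/G')=\dim G-\dim G'$, which holds for any closed normal subgroup and carries no semisimple content. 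The correct use of that hypothesis is different: $(R(G)\cap G')^0$ is a central torus of $G'$, hence a connected solvable normal subgroup contained in the radical of $G'$, which is trivial if $G'$ is semisimple (equivalently, $R(G)\cap G'\subseteq Z(G')$, which is finite). But the semisimplicity of $G'$ (trivial radical, finite center) is itself one of the nontrivial structure facts usually established alongside (3) via root subgroups, so you cannot get (3) by formal bookkeeping alone; either carry out that root-theoretic argument or cite the standard reference, as the paper does.
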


\begin{proof}
For the first claim, see Lemma 7.3.1(i) in \cite{Springer}. For the second see Corollary 8.1.6(i) in \cite{Springer}. For the third, see Lemma 7.3.1(ii) in \cite{Springer}.
\end{proof}
\begin{lemma}
\label{p doesnt divide the order}
Let $\F$ be a finite field of characteristic $p$ and let $G$ be a connected reductive group defined over $\F$. Then $p \nmid |(G/G')(\F)|$ (where $G'$ is the commutator subgroup of $G$).
\end{lemma}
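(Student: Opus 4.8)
The plan is to use the structure theory of connected reductive groups established in Lemma \ref{radical}, together with the basic fact that a torus over a finite field has order prime to the characteristic. First I would recall that $G/G'$ is a quotient of $G$, and since $G$ is connected, so is $G/G'$. Moreover, $G/G'$ is abelian by construction, and it is reductive (a quotient of a reductive group by a closed normal subgroup is reductive). Hence $G/G'$ is a connected abelian reductive group, and it is well known that such a group is a torus. So the statement reduces to the claim that for a torus $T$ defined over $\F$, we have $p \nmid |T(\F)|$.

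For the torus claim, I would argue as follows. Let $T$ be a torus over $\F = \F_q$ with $q$ a power of $p$, of dimension $d$. Over a finite extension $\F' = \F_{q^m}$ that splits $T$, we have $T(\F') \iso (\F'^\times)^d$, whose order $(q^m - 1)^d$ is prime to $p$. Since $T(\F) \subseteq T(\F')$ as a subgroup, $|T(\F)|$ divides $(q^m-1)^d$, and is therefore also prime to $p$. Applying this with $T = (G/G')^0 = G/G'$ (using connectedness) and invoking Lemma \ref{G/H(Fp)} to identify $(G/G')(\F)$ with $G(\F)/G'(\F)$ if one wants the statement phrased group-theoretically, we conclude $p \nmid |(G/G')(\F)|$.

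There is one point that needs care, depending on how one wants to phrase things: whether to work with the algebraic group $G/G'$ and its $\F$-points directly, or with the abstract quotient $G(\F)/G'(\F)$. By Lemma \ref{G/H(Fp)}, since $G'$ is connected (the commutator subgroup of a connected group is connected), the natural map $G(\F)/G'(\F) \to (G/G')(\F)$ is a bijection, so the two interpretations agree and the distinction is harmless. I would also remark that this lemma does not actually require $p$ to be large — it holds for all primes — but it fits naturally here among the reductive-group preliminaries used in the proof of Theorem \ref{second big thm}.

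I do not expect a serious obstacle here; the only thing to be a little careful about is citing the fact that a connected abelian reductive group is a torus (equivalently, that a smooth connected affine abelian group with no nontrivial unipotent part is a torus), which is standard structure theory. Alternatively, one can bypass this by using Lemma \ref{radical}: $G = R(G) \cdot G'$ with $R(G)$ a central torus, so $G/G'$ is a quotient of $R(G)$, hence a quotient of a torus, hence itself a torus (a quotient of a torus is a torus). Then the argument of the second paragraph applies verbatim.
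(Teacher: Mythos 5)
Your proof is correct, and it differs from the paper's in execution though not in ultimate substance. The paper never asserts that $G/G'$ is a torus as an algebraic group: it uses Lemma \ref{radical} to show that the map $R(G)(\Fc)\to (G/G')(\Fc)$ is surjective with finite kernel $R(G)\cap G'$, and then argues purely with element orders --- since $R(G)\iso \G_m^n$ has no $p$-torsion in its $\Fc$-points, neither does a quotient of $R(G)(\Fc)$ by a finite group, and in particular the subgroup $(G/G')(\F)$ contains no element of order $p$. You instead identify $G/G'$ itself as an $\F$-torus (either as a connected abelian reductive group, or as a quotient of the torus $R(G)$) and then use the splitting-field/Lagrange argument: $T(\F)\leq T(\F')\iso(\F'^\times)^d$ for a finite splitting extension $\F'$, so $|T(\F)|$ divides $(|\F'|-1)^d$, which is prime to $p$. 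Your route gives the cleaner and more reusable statement that $\F$-points of tori have order prime to $p$, at the cost of invoking the structural facts that quotients of reductive (or of tori) are reductive (resp.\ tori) --- harmless in the reduced, classical setting of this paper, though in characteristic $p$ one would normally pause over scheme-theoretic kernels; the paper's order-of-elements argument sidesteps exactly this citation. Your side remarks are also fine: $G'$ is indeed connected so Lemma \ref{G/H(Fp)} applies if one wants $G(\F)/G'(\F)$, though the statement as written concerns $(G/G')(\F)$ directly, and you are right that no largeness hypothesis on $p$ is needed.
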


\begin{proof}
Denote by $R(G)$ the radical of $G$. By Lemma \ref{radical} we have: $G = G'\cdot 
R(G)$. This implies that the map $\phi:R(G)(\Fc)\rightarrow (G/G')(\Fc)$ defined by to be the composition of the inclusion $R(G)(\Fc)\hookrightarrow G(\Fc)$ and the quotient map $G(\Fc) \twoheadrightarrow  (G/G')(\Fc)$ is onto. Since $R(G)\cap G'$ is finite (by Lemma \ref{radical}), we have that $(G/G')(\Fc)$ is isomorphic to a quotient of $R(G)(\Fc)$ by a finite group. Now, it is enough to prove that $(G/G')(\Fc)$ does not have any elements of order $p$ (since if so then $(G/G')(\F)$ also does not). Since $(G/G')(\Fc)$ isomorphic to a quotient of $R(G)(\Fc)$ by a finite group, we get that it is enough to prove that the order of any element of finite order of $R(G)(\Fc)$ isn't divisible by $p$. But by Lemma \ref{radical}, $R(G)$ is a central torus, so we have by definition: $R(G)\iso \G_m^n$ for some non-negative integer $n$. The claim now readily follows.  
\end{proof}

We can now move on to handle the two cases depending on whether the restriction of an irreducible representation of $G(\F)$ to $U(\F)$ is isotypic or not. 

\subsection{The isotypic case}
We now want to handle the first case, when the restriction of our representation to $U(\F)$ is isotypic. To do that it will be reasonable to use Lemma $\ref{H^2 is good}$. To do that we need to calculate the second cohomology of $G(\F)/U(\F)=(G/U)(\F)$ (the equality is by Lemma \ref{G/H(Fp)}). For that we have the following lemma:

\begin{lemma}
\label{H^2=1}
There is an increasing function $C_{H^2} : \N \rightarrow \N$ s.t. for any integer $n$ and connected reductive group $G$ defined over a finite field $\F$ with characteristic $p$, if $\dim G < n$ and $p > C_{H^2}(n)$, then $H^2(G(\F),\mu_{p^\infty})=1$.
\end{lemma}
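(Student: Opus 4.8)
The goal is to show that for a connected reductive group $G$ over a finite field $\F$ of characteristic $p$, with $\dim G$ bounded, the Schur multiplier-type group $H^2(G(\F),\mu_{p^\infty})$ vanishes once $p$ is large. The key point is that $\mu_{p^\infty}$ is a $p$-group while, as $p$ grows, the ``$p$-part'' of the finite group $G(\F)$ becomes controlled: the relevant obstructions live in the Sylow $p$-subgroup, and we want to exploit that the Sylow $p$-subgroup of $G(\F)$ is (conjugate into) the $\F$-points of the unipotent radical of a Borel, whose structure is uniform.

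\begin{proof}[Proof sketch]
First I would reduce the cohomological statement to a statement about a Sylow $p$-subgroup $P$ of $G(\F)$: since $\mu_{p^\infty}$ is a $p$-torsion module, the restriction map $H^2(G(\F),\mu_{p^\infty}) \to H^2(P,\mu_{p^\infty})$ is injective (the corestriction–restriction composite is multiplication by $[G(\F):P]$, which is prime to $p$ and hence invertible on $\mu_{p^\infty}$). So it suffices to show $H^2(P,\mu_{p^\infty}) = 1$, or at least that the image of restriction is trivial. Next I would identify $P$ concretely: for $p$ large (in particular $p > \dim G \geq n$, so we are in the range where Lemma \ref{log is iso} and Corollary \ref{cardinality of a unipotent group is a power of |F|} apply), a Sylow $p$-subgroup of $G(\F)$ is the group of $\F$-points of the unipotent radical $U_B$ of a Borel subgroup defined over a suitable extension — more precisely one works with $G$ over $\Fc$ and uses that $|G(\F)|_p = |\F|^{\dim U_B}$. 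The cleanest route: $P = \exp(\mathfrak{n}(\F))$ where $\mathfrak{n} = \log(U_B)$ is a nilpotent Lie algebra over $\F$ (valid since $p$ is large by Lemma \ref{U=exp(g)}), so $P$ is a finite $p$-group obtained by ``exponentiating'' an $\F$-Lie algebra; its nilpotency class is bounded by $n$, and its exponent is $p$ once $p > n$ (since $\exp(x)^p = \exp(px) = 1$).

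Now the heart of the argument: I want $H^2(P,\mu_{p^\infty}) = 0$, equivalently that $P$ has no nontrivial central extension by a cyclic $p$-group, equivalently (since $P$ has exponent $p$) control of the Schur multiplier $H_2(P,\Z)$. This is \emph{not} true for a general finite $p$-group $P$ of exponent $p$ — the Schur multiplier can be huge — so the real content must be that we only need the image of $H^2(G(\F),\mu_{p^\infty})$ in $H^2(P,\mu_{p^\infty})$ to vanish, i.e. we need to kill only the \emph{stable} classes (those in the image of restriction from $G(\F)$), and these are constrained by $G(\F)$-conjugation fusion in $P$. The plan is to invoke the known computation of the Schur multiplier of finite groups of Lie type: by the work of Steinberg and others, for a simply connected group the Schur multiplier of $G(\F)$ in the ``defining'' characteristic $p$ is trivial apart from a short explicit list of small exceptions (e.g. $A_1(4), A_1(9), A_2(2), A_2(4), B_2(2), G_2(2), {}^2A_3(2), {}^2A_5(2), B_3(2), D_4(2), {}^2E_6(2), F_4(2), G_2(4)$, and the $A_1, {}^2B_2, {}^2G_2$ over very small fields), all occurring only for $p \leq 3$ or tiny fields; once $p$ is larger than the bound coming from the rank (which is $\leq n$), the $p$-part of $H^2(G(\F),\C^\times) = H_2(G(\F),\Z)$ vanishes. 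For a general connected reductive $G$ one reduces to the simply connected semisimple case: first pass to the derived group $G'$ (a reductive group of bounded dimension), handle the torus quotient $G/G'$ using Lemma \ref{p doesnt divide the order} (so $(G/G')(\F)$ has order prime to $p$, contributing nothing to $p$-torsion in $H^2$), glue via the Lyndon–Hochschild–Serre spectral sequence for $1 \to G'(\F) \cdot Z \to G(\F) \to (\text{prime-to-}p) \to 1$; then pass from $G'$ to its simply connected cover $\widetilde{G'}$ via an isogeny whose kernel is a finite group scheme of multiplicative type of order bounded in terms of $n$, so by Lemma \ref{bound on coker} the index $[G'(\F):\widetilde{G'}(\F)\text{-image}]$ is bounded, and for $p$ exceeding that bound the relevant transfer argument again kills $p$-torsion.

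The main obstacle I anticipate is the bookkeeping in the last paragraph: assembling a \emph{uniform} bound $C_{H^2}(n)$ from (a) the Steinberg-type vanishing of Schur multipliers away from small primes, (b) the isogeny/center comparison between $G'$ and $\widetilde{G'}$ with order bounded in $n$, and (c) the reductive-to-semisimple reduction. Each ingredient is standard, but one must be careful that all the ``small exception'' thresholds and all the isogeny-kernel orders are controlled purely by $n$ (equivalently by the bound on $\dim G$ and hence on the rank and on the possible root data), independently of the particular $\F$; this is where the hypothesis $\dim G < n$ is used, and it is exactly the sort of ``only finitely many root data of bounded rank'' finiteness that makes a single function $C_{H^2}$ work. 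A secondary subtlety is that $G(\F)$ need not be split — one may have to twist by a Frobenius — but the Schur multiplier computations for twisted groups ${}^2A_n, {}^2D_n, {}^3D_4, {}^2E_6$ are equally explicit and have the same ``only small primes/fields'' exceptional behaviour, so this does not affect the conclusion.
\end{proof}
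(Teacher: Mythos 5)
Your proposal is correct and, once you discard the opening Sylow-subgroup reduction (which you yourself recognize as a dead end and never use), it follows essentially the same route as the paper: restrict $H^2(-,\mu_{p^\infty})$ along the chain $G(\F)\supseteq G'(\F)\supseteq \pi(\Gtt(\F))$, whose successive indices are prime to $p$ by Lemma \ref{p doesnt divide the order} (with Lemma \ref{G/H(Fp)}) and by the bound $2^n<p$ on the isogeny kernel and center (Lemma \ref{bound on coker}, Theorem \ref{semi simple lemma}(3)), and then quote the vanishing for the simply connected semisimple cover. The only cosmetic difference is that you cite Steinberg's explicit Schur-multiplier computations with their list of small exceptions, where the paper uses the packaged statement of Theorem \ref{semi simple lemma}(1) together with perfectness, Theorem \ref{semi simple lemma}(2); these are the same underlying fact, so nothing essential is missing.
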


To prove this lemma, we will need the following results from \cite{BM}:

\begin{thm}[{\cite[Theorem 3.1.1]{BM}}]
\label{semi simple lemma}
For any finite field $\F$ of characteristic $p>3$, and any connected, simply-connected semi-simple algebraic group $G$ defined over $\F$, we have: 
\begin{enumerate}
    \item Vanishing of $H^2$ for simply connected groups: $H^2(G(\F),A) = 1$,  for any trivial $G(\F)$-module $A$.
    \item Perfectness of simply connected groups: $G(\F)$ is perfect.
    \item Bound on the center of connected groups: $|Z(G(\Fc))|\leq 2^\dimG$.
\end{enumerate}
\end{thm}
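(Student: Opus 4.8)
\textbf{Proof proposal for Theorem \ref{semi simple lemma}.}

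The three statements are essentially classical facts about finite groups of Lie type $G(\F)$ coming from a connected, simply connected, semisimple $\F$-group $G$; the plan is to reduce each one to the case of a quasi-simple group of Lie type and then invoke the known results, using that a simply connected semisimple group is a direct product of its $\F$-simple factors, each of which is Weil-restricted from an absolutely simple, simply connected group over a finite extension. First I would record this structural reduction: if $G = \prod_i G_i$ with $G_i$ absolutely simple, simply connected, and defined over $\F_i/\F$ finite, then $G(\F) = \prod_i G_i(\F_i)$. Since $H^2$ of a finite product into a fixed trivial module splits as a product of the $H^2$'s, and perfectness is preserved under products, and the center of a product is the product of the centers, it suffices to prove each of (1)--(3) for a single absolutely simple, simply connected $H$ over a finite field $\mathbb{E}$.

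For part (2), perfectness of $H(\mathbb{E})$: this is the theorem of Steinberg (for untwisted groups) together with the complementary results for twisted groups; the only exceptions to $H(\mathbb{E})$ being perfect occur for very small $\mathbb{E}$ (the list $A_1(\F_2), A_1(\F_3), B_2(\F_2), G_2(\F_2), {}^2A_2(\F_2), {}^2B_2(\F_2), {}^2G_2(\F_3)$ and the like), and every offending field has characteristic $2$ or $3$. Hence for $p > 3$ the group $H(\mathbb{E})$ is perfect, and part (2) follows. For part (1), vanishing of the Schur multiplier $H^2(H(\mathbb{E}),\Q/\Z)$ for simply connected groups: again this is classical — the Schur multiplier of $H(\mathbb{E})$ is trivial except for a short list of exceptional cases in characteristics $2$ and $3$ — so for $p>3$ we get $H^2(H(\mathbb{E}),A)=1$ for any trivial module $A$ (using that $H(\mathbb{E})$ is perfect, so $H^2$ with coefficients in a trivial divisible group is the full Schur multiplier). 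I would cite these facts in the form already used in \cite{BM}, stating them at the level of generality \quote{$H$ absolutely simple simply connected, $p>3$}. For part (3), the bound $|Z(H(\Fc))| \le 2^{\dim H}$: here $Z(H)$ is a finite group scheme, and since $H$ is simply connected its center is the fundamental group of the adjoint group, whose order is the determinant of the Cartan matrix; one checks case by case (or bounds the fundamental group order by $n+1$ for type $A_n$, $\le 4$ for $D_n$, etc.) that $|Z(H(\Fc))|$ is at most $2^{\operatorname{rank} H} \le 2^{\dim H}$, and then multiplicativity over the simple factors gives the bound $2^{\dim G}$ for general $G$.

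The main obstacle, to the extent there is one, is bookkeeping rather than mathematics: one must make sure the reduction to absolutely simple factors is compatible with the hypothesis $p>3$ uniformly (it is, since the exceptional small fields all have characteristic $\le 3$ regardless of which simple type one is in), and one must be careful that Weil restriction does not inflate the center or break perfectness (it does not: $(\operatorname{Res}_{\F_i/\F} H_0)(\F) = H_0(\F_i)$ as abstract groups, so all three properties transfer verbatim). I would therefore present the proof as: (i) state the product/Weil-restriction decomposition of $G(\F)$; (ii) quote Steinberg's perfectness theorem and the Schur-multiplier computation for absolutely simple simply connected groups over fields of characteristic $>3$; (iii) quote the bound on the order of the center; (iv) assemble the three conclusions by multiplicativity. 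Since all of these are standard and are exactly the statements packaged in \cite[Theorem 3.1.1]{BM}, the \quote{proof} is really a pointer to that reference together with the elementary product reduction, and I would keep it short.
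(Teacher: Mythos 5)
The paper offers no proof of this statement: it is imported as a black box from \cite[Theorem 3.1.1]{BM}, so there is no internal argument to measure yours against. Your sketch is a correct account of the standard proof and is, in substance, surely the one behind the cited result: decompose a simply connected semisimple group as the direct product of its $\F$-simple factors, write each as a Weil restriction of an absolutely simple simply connected group (so that $G(\F)=\prod_i H_i(\F_i)$), and then quote Steinberg's perfectness theorem and the classification of exceptional Schur multipliers, whose finitely many exceptions all live over fields of characteristic $2$ or $3$, together with the bound on the fundamental group by the determinant of the Cartan matrix. Two points are worth making explicit if you flesh this out. First, the splitting $H^2(\Gamma_1\times\Gamma_2,A)\cong H^2(\Gamma_1,A)\oplus H^2(\Gamma_2,A)$ for trivial coefficients has a cross term $\Hom(\Gamma_1^{\mathrm{ab}},\Hom(\Gamma_2^{\mathrm{ab}},A))$, which vanishes only because the factors are perfect, so part (2) must come before part (1); and for a single perfect factor you need the \emph{integral} Schur multiplier $H_2(\Gamma,\Z)$ to vanish in order to conclude $H^2(\Gamma,A)=\Hom(H_2(\Gamma,\Z),A)=0$ for an arbitrary trivial module $A$, not merely a divisible one --- which is exactly what the exceptional-multiplier list gives for $p>3$. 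Second, in part (3) the quantity $|Z(G(\Fc))|$ counts $\Fc$-points of the centre, which is at most the order of the fundamental group and can be strictly smaller when $p$ divides that order (e.g.\ $\SL_p$ in characteristic $p$), so your case check $\det(\text{Cartan})\le 2^{\mathrm{rank}}\le 2^{\dimG}$, multiplied over the absolutely simple factors of $G_{\Fc}$, does give the stated bound.
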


\begin{lemma}[{\cite[Corollary 3.1.3]{BM}}]
\label{univertsal cover}
Let $\F$ be a finite field of characteristic $p>3$. Let $G$ be a semi-simple group defined over $\F$ and let $\phi:\Tilde{G}\rightarrow G$ be its universal cover. Then $\phi(\Tilde{G}(\F))=G(\F)'$.
\end{lemma}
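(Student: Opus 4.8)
The plan is to deduce Lemma \ref{univertsal cover} from the simply-connected case (Theorem \ref{semi simple lemma}(2)) by reducing a general semi-simple $G$ to its universal cover. First I would set up notation: let $\phi : \Tilde{G} \to G$ be the universal cover, so $\Tilde{G}$ is connected, simply-connected, semi-simple, and $\phi$ is an isogeny with central kernel $Z := \ker\phi \subseteq Z(\Tilde{G})$. The key observation is that $\Tilde{G}(\F)$ is perfect by Theorem \ref{semi simple lemma}(2) (using $p > 3$), hence its image $\phi(\Tilde{G}(\F))$ is a perfect subgroup of $G(\F)$, and therefore $\phi(\Tilde{G}(\F)) \subseteq G(\F)'$. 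So the whole content of the lemma is the reverse inclusion $G(\F)' \subseteq \phi(\Tilde{G}(\F))$.

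For the reverse inclusion, the plan is to work at the level of algebraic groups first and then pass to $\F$-points. Since $\phi$ is surjective as a morphism of algebraic groups and $\Tilde{G}$ is perfect as an algebraic group (any connected semi-simple group is perfect, and $\Tilde G$ is connected semisimple), we have $\phi(\Tilde{G}) = G$, so in particular $G = G'$ as algebraic groups. The subtlety is purely at the level of rational points: $\phi(\Tilde{G}(\F))$ can be a proper subgroup of $G(\F)$ because $H^1(\F, Z)$ need not vanish. Consider the exact sequence of $\Gal(\Fc/\F)$-modules $1 \to Z(\Fc) \to \Tilde{G}(\Fc) \to G(\Fc) \to 1$; taking Galois cohomology gives $1 \to Z(\F) \to \Tilde{G}(\F) \to G(\F) \to H^1(\F, Z) \to H^1(\F, \Tilde{G})$, and since $\F$ is finite and $\Tilde{G}$ is connected, Lang's theorem gives $H^1(\F,\Tilde G) = 1$, so $G(\F)/\phi(\Tilde{G}(\F)) \iso H^1(\F, Z)$. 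Now $Z$ is a finite abelian (commutative) group scheme of multiplicative type sitting inside a torus of $\Tilde G$, and for such a group over a finite field $H^1(\F,Z)$ is finite and abelian of order equal to $|Z(\F)|$ (this is the standard fact that for a finite commutative group scheme $M$ over a finite field, $|H^0(\F,M)| = |H^1(\F,M)|$, which for $M$ of multiplicative type with $p \nmid |M|$ — guaranteed since $p>3$ is large relative to $|Z| \le 2^{\dim G}$, or one can invoke it for $p$ large enough via $C_{H^2}$-type bounds — follows from the Herbrand quotient of a cyclic group being trivial). The point is that $G(\F)/\phi(\Tilde{G}(\F))$ is a finite abelian group.

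Given this, the reverse inclusion follows formally: since $G(\F)/\phi(\Tilde{G}(\F))$ is abelian, the commutator subgroup $G(\F)'$ maps to $0$ in the quotient, i.e. $G(\F)' \subseteq \phi(\Tilde{G}(\F))$. Combined with the first inclusion this gives $G(\F)' = \phi(\Tilde{G}(\F))$, as desired. The main obstacle — and the step I would be most careful about — is justifying that $G(\F)/\phi(\Tilde G(\F))$ is abelian without getting mired in group-scheme cohomology; the cleanest route is to cite the standard structure result that $\Tilde{G}(\F) \to G(\F)$ has abelian cokernel (this is where one uses that the kernel $Z$ of an isogeny is central, so $G(\F)/\phi(\Tilde{G}(\F))$ embeds in $H^1$ of a central — hence, over a perfect field, commutative — group), which is essentially already packaged in Lemma \ref{bound on coker} together with centrality of $Z$. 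In fact the authors likely intend exactly this: $\phi$ is an isogeny with central kernel, so the cokernel on $\F$-points is a quotient of $H^1(\F,Z)$ with $Z$ central and commutative, hence abelian, and then the two-line commutator argument closes the proof. I would present the argument in that order: (i) $\phi(\Tilde{G}(\F)) \subseteq G(\F)'$ via perfectness of $\Tilde{G}(\F)$; (ii) $G(\F)/\phi(\Tilde{G}(\F))$ is abelian via centrality of $\ker\phi$; (iii) conclude equality.
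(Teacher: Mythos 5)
The paper does not prove this lemma at all: it is quoted verbatim as \cite[Corollary 3.1.3]{BM}, so there is no in-paper argument to compare against. Your proof is correct and is essentially the standard derivation one would expect behind that citation: (i) $\Tilde{G}(\F)$ is perfect by Theorem \ref{semi simple lemma}(2), so $\phi(\Tilde{G}(\F))=\phi(\Tilde{G}(\F))'\subseteq G(\F)'$; (ii) the sequence of Galois modules $1\to Z(\Fc)\to\Tilde{G}(\Fc)\to G(\Fc)\to 1$ is exact (surjectivity of $\phi$ on $\Fc$-points holds for any isogeny over an algebraically closed field), the connecting map $G(\F)\to H^1(\Gal(\Fc/\F),Z(\Fc))$ is a group homomorphism with kernel $\phi(\Tilde{G}(\F))$ precisely because $Z$ is central, and $H^1$ of an abelian module is abelian, so $G(\F)'\subseteq\phi(\Tilde{G}(\F))$. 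Note that Lang's theorem and the finiteness/order computation of $H^1(\F,Z)$ are not actually needed for the equality you are proving; all you use is that the cokernel embeds in an abelian group. The one place where you hedge --- the scheme-theoretic versus \'etale kernel when $p$ divides the order of the fundamental group (e.g.\ $\SL_p\to\mathrm{PGL}_p$) --- is harmless for exactly this reason: the sequence on $\Fc$-points is exact with $Z(\Fc)$ the (possibly smaller) \'etale part of the kernel, and abelianness of the cokernel is all that is required, so no hypothesis beyond $p>3$ (needed only for the perfectness input) is used. I would trim the digression about Herbrand quotients and $|H^0|=|H^1|$, which is true but irrelevant here and is the only spot where the write-up flirts with an unnecessary (and not quite correctly justified) claim.
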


\begin{lemma}[{\cite[Lemma 3.1.5]{BM}}]
\label{bound on the index of the derivative}
Let $G$ be a connected reductive group defined over a finite field $\F$. Assume that $char\F>3$ Then we have: $[G'(\F):G(\F)']\leq 2^{\dim(G)}$.
\end{lemma}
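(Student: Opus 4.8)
The plan is to sandwich the abstract derived subgroup $G(\F)'$ between two subgroups of $G'(\F)$ that we can control, namely $G'(\F)' \subseteq G(\F)' \subseteq G'(\F)$, and then to bound $[G'(\F):G'(\F)']$ via the simply connected cover of the semisimple group $G'$; this last index already dominates the one we want.

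First I would prove the right-hand inclusion $G(\F)' \subseteq G'(\F)$. Since $G$ is connected reductive, Lemma \ref{radical} gives $G = R(G)\cdot G'$ with $R(G)\cap G'$ finite, so $G/G' \iso R(G)/(R(G)\cap G')$ is a torus, in particular commutative. As $G'$ is connected, Lemma \ref{G/H(Fp)} identifies $G(\F)/G'(\F)$ with $(G/G')(\F)$, which is therefore abelian; hence $G(\F)' \subseteq G'(\F)$. The left-hand inclusion $G'(\F)' \subseteq G(\F)'$ is immediate from $G'(\F) \subseteq G(\F)$. Since all the groups in sight are finite, the chain of inclusions yields $[G'(\F):G(\F)'] \leq [G'(\F):G'(\F)']$.

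Next I would bound $[G'(\F):G'(\F)']$. The group $G'$ is connected semisimple and defined over $\F$; let $\phi:\tilde G \to G'$ be its universal cover, an isogeny over $\F$ with finite central kernel $Z = \ker\phi \subseteq Z(\tilde G)$. By Lemma \ref{univertsal cover}, $\phi(\tilde G(\F)) = G'(\F)'$, and by Lemma \ref{bound on coker} applied to $\phi$, $[G'(\F):\phi(\tilde G(\F))] \leq |Z(\F)|$. Finally $|Z(\F)| \leq |Z(\Fc)| \leq |Z(\tilde G)(\Fc)| \leq 2^{\dim \tilde G} = 2^{\dim G'} \leq 2^{\dim G}$: the third inequality is Theorem \ref{semi simple lemma}(3), where the hypothesis $\charr\F>3$ is used, the equality holds because $\phi$ is an isogeny, and the last step is $\dim G' \leq \dim G$. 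Combining the two paragraphs gives $[G'(\F):G(\F)'] \leq 2^{\dim G}$.

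I do not expect a serious obstacle here: with Theorem \ref{semi simple lemma}, Lemma \ref{univertsal cover}, Lemma \ref{bound on coker}, and Lemma \ref{G/H(Fp)} available, this is essentially an assembly argument. The points that need a little care are that $G(\F)'$ really does lie inside $G'(\F)$ (so that the index is even defined) --- which is exactly where connectedness of $G'$ and the torus structure of $G/G'$ enter --- and that $\dim$ is preserved under the isogeny $\phi$, so the final bound is in terms of $\dim G$ rather than $\dim\tilde G$. The characteristic hypothesis $\charr\F>3$ is needed only through the two cited results of \cite{BM}.
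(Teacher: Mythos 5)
Your proof is correct: the sandwich $G'(\F)' \subseteq G(\F)' \subseteq G'(\F)$ (the right inclusion because $G/G'$ is a torus, hence $(G/G')(\F)$ is abelian) reduces the claim to $[G'(\F):G'(\F)']$, which Lemma \ref{univertsal cover}, Lemma \ref{bound on coker}, and Theorem \ref{semi simple lemma}(3) bound by $2^{\dim G}$. The paper itself only cites this statement from \cite{BM} without reproducing a proof, but your argument assembles exactly the ingredients the paper imports from the same section of \cite{BM}, so it is essentially the intended argument.
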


\begin{lemma}[{\cite[Lemma 7.0.1]{BM}}]
\label{short exat seq. and cohomology}
For any short exact sequence of finite groups: 
$$1\rightarrow\Gamma_1\rightarrow\Gamma_2\rightarrow\Gamma_3\rightarrow 1$$
we have: 
\begin{enumerate}
    \item Descent of $H^1$ to a quotient: if $p\nmid|\Gamma_1|$,then $H^i(\Gamma_2,\Fp)=H^i(\Gamma_3,\Fp)$, for all i.
    \item Descent of $H^1$ to a subgroup: if $p\nmid|\Gamma_3|$,then $H^i(\Gamma_2,\Fp)=H^i(\Gamma_1,\Fp)^{\Gamma_3}$, for all i.
\end{enumerate}
\end{lemma}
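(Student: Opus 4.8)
\textbf{Plan of proof for Lemma \ref{short exat seq. and cohomology}.}

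The plan is to reduce both statements to the Lyndon--Hochschild--Serre spectral sequence $E_2^{i,j} = H^i(\Gamma_3, H^j(\Gamma_1, \Fp)) \Rightarrow H^{i+j}(\Gamma_2, \Fp)$ attached to the given short exact sequence, together with the standard fact that the (co)homology of a finite group with coefficients in an $\Fp$-vector space vanishes in positive degrees once the group order is prime to $p$. Concretely, for a finite group $\Gamma$ with $p \nmid |\Gamma|$ and any $\Fp[\Gamma]$-module $V$, one has $H^j(\Gamma, V) = 0$ for all $j > 0$ (this is Maschke's theorem: $\Fp[\Gamma]$ is semisimple, so every module is projective, hence acyclic), while $H^0(\Gamma, V) = V^\Gamma$. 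This is the only external input needed besides the spectral sequence itself; I would cite a standard homological algebra reference for both.

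For part (1), assume $p \nmid |\Gamma_1|$. Since $\Fp$ is a trivial $\Gamma_1$-module and $p \nmid |\Gamma_1|$, the vanishing fact gives $H^j(\Gamma_1, \Fp) = 0$ for $j > 0$, so the $E_2$-page is concentrated in the row $j = 0$, where $E_2^{i,0} = H^i(\Gamma_3, H^0(\Gamma_1,\Fp)) = H^i(\Gamma_3, \Fp)$ (the $\Gamma_3$-action on $H^0(\Gamma_1,\Fp) = \Fp$ is trivial). A spectral sequence concentrated in a single row degenerates at $E_2$, and the edge morphism yields an isomorphism $H^i(\Gamma_2, \Fp) \iso H^i(\Gamma_3,\Fp)$ for all $i$, as claimed. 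For part (2), assume $p \nmid |\Gamma_3|$. Then for every $j$ the $\Fp[\Gamma_3]$-module $H^j(\Gamma_1, \Fp)$ has positive-degree $\Gamma_3$-cohomology vanishing, so the $E_2$-page is concentrated in the column $i = 0$, with $E_2^{0,j} = H^0(\Gamma_3, H^j(\Gamma_1, \Fp)) = H^j(\Gamma_1,\Fp)^{\Gamma_3}$. Again the spectral sequence degenerates at $E_2$ and the edge morphism gives $H^i(\Gamma_2,\Fp) \iso H^i(\Gamma_1,\Fp)^{\Gamma_3}$ for all $i$, as desired.

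I do not anticipate a genuine obstacle here; the statement is a clean and well-known consequence of the LHS spectral sequence, and the labels in the lemma (``Descent of $H^1$...'') are slightly misleading since the conclusion in fact holds in all degrees $i$, which is exactly what the spectral sequence argument delivers. The only points requiring a modicum of care are: (i) checking that the coefficient module $\Fp$ is genuinely trivial as a $\Gamma_2$-module so that $H^j(\Gamma_1,\Fp)$ carries the natural $\Gamma_3 = \Gamma_2/\Gamma_1$-action with no twist; and (ii) invoking the general principle that a first-quadrant cohomological spectral sequence supported on a single row or a single column collapses, with the surviving edge map being an isomorphism. Both are routine. If one prefers to avoid spectral sequences entirely, part (1) can alternatively be obtained from the inflation-restriction exact sequence by an easy induction on $i$ using that $H^j(\Gamma_1,\Fp) = 0$ for $j>0$ kills all the correction terms; I would include this remark only if a referee objects to the spectral sequence formalism.
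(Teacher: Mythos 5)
The paper does not prove this lemma at all — it is imported verbatim as \cite[Lemma 7.0.1]{BM}, so there is no internal argument to compare against. Your proof via the Lyndon--Hochschild--Serre spectral sequence, with Maschke's theorem killing the positive-degree cohomology of whichever of $\Gamma_1,\Gamma_3$ has order prime to $p$ and the edge morphisms (inflation in case (1), restriction in case (2)) supplying the isomorphisms, is correct and is the standard proof one would find behind the citation; your observation that the conclusion holds in all degrees $i$, not just $i=1$ as the labels suggest, is also accurate and is exactly what the paper uses (it needs the $i=2$ case in the proof of Lemma \ref{H^2=1}).
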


\begin{proof} [Proof of Lemma \ref{H^2=1}]
It is enough to show the claim replacing $\mu_{p^\infty}$ by $\Fp$. Define: $C_{H^2}(n):=max(3,2^n)$. Let $p>C_{H^2}(n)$. By Lemmas \ref{p doesnt divide the order} and \ref{G/H(Fp)}, we have $p \nmid [G(\F):G'(\F)]$, which implies by the descent of $H^2$ to a subgroup(Lemma \ref{short exat seq. and cohomology}(2)) that it is enough to prove: $H^2(G'(\F),\Fp)=1$. By Lemma \ref{bound on the index of the derivative} we have: $[G'(\F):G'(\F)']\leq 2^{\dim(G)} < p$, which means that it is enough to prove that $H^2(G'(\F)',\Fp)=1$. Let $\pi: \Gtt \rightarrow G'$ the universal cover of $G'$. By Lemma \ref{univertsal cover}, it is enough to prove that $H^2(\pi(\Gtt(\F)),\Fp)=1$. By the bound on the center of connected groups (Theorem \ref{semi simple lemma}(3)) and the fact that $\pi$ is a central isogeny, we have a bound on the $\F$-points of the kernel of $\pi$: $|\ker(\pi)\cap \Gtt(\F)| \leq |Z(\Gtt)| \leq 2^n < p$. By descent of $H^1$ to a quotient (Lemma \ref{short exat seq. and cohomology}(1)), it is enough to prove: $H^2(\Gtt(\F),\Fp)=1$, but that is true by vanishing $H^2$ for semi-simple groups(Theorem \ref{semi simple lemma}(1)). 

\end{proof}

We may now use Lemma \ref{H^2 is good} and the last lemma to handle the first case. The following corollary does that: 
\begin{cor}
\label{the isotypic case}
Let $\F$ be a finite field of characteristic $p$. Let $G$ be a connected linear algebraic group defined over $\F$. Let $U$ be its unipotent radical and let $\rho$ be an irreducible representation of $G(\F)$, s.t. its restriction to $U(\F)$ is isotypic. Assume $p>C_{H^2}(1+\dimG)$. Then there exist irreducible representations $\pi_1, \pi_2 \in \irr(G(\F))$ such that $\pi_1\downarrow_{U(\F)}$ is irreducible, $U(\F)\subseteq \ker(\pi_2)$ and $\rho \iso \pi_1 \otimes \pi_2$. In particular, $\dim \rho \in \dimirr(G(\F)/U(\F)) \cdot \dimirr(U(\F))$. 
\end{cor}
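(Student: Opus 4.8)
The plan is to reduce Corollary~\ref{the isotypic case} to a direct application of Lemma~\ref{H^2 is good}, with the group $G(\F)$ playing the role of ``$G$'' in that lemma and the subgroup $U(\F)$ playing the role of the normal $p$-subgroup ``$H$''. First I would check the hypotheses of Lemma~\ref{H^2 is good}: that $U(\F)$ is a normal $p$-subgroup of $G(\F)$, that $\rho\downarrow_{U(\F)}$ is isotypic (given), and that $H^2(G(\F)/U(\F),\mu_{p^\infty})$ is trivial. Normality of $U(\F)$ in $G(\F)$ is immediate from normality of $U$ in $G$ as algebraic groups. That $U(\F)$ is a $p$-group follows because $U$ is unipotent and defined over the finite field $\F$ of characteristic $p$: every element of $U(\F)$ is a unipotent matrix over a field of characteristic $p$, hence has $p$-power order (alternatively, one may invoke Corollary~\ref{cardinality of a unipotent group is a power of |F|}, though that needs $p$ large; the elementary argument suffices and needs no largeness hypothesis here).

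Next I would identify $G(\F)/U(\F)$ with $(G/U)(\F)$ using Lemma~\ref{G/H(Fp)} (valid since $U$ is connected), and observe that $G/U$ is a connected reductive group defined over $\F$ of dimension $\dim G - \dim U \le \dim G < 1 + \dim G$. Then the triviality of $H^2((G/U)(\F),\mu_{p^\infty})$ is exactly the conclusion of Lemma~\ref{H^2=1} applied with $n = 1 + \dim G$, which is why the hypothesis $p > C_{H^2}(1+\dim G)$ is imposed. Feeding all of this into Lemma~\ref{H^2 is good} yields irreducible representations $\pi_1,\pi_2 \in \irr(G(\F))$ with $\pi_1\downarrow_{U(\F)}$ irreducible, $U(\F)\subseteq\ker(\pi_2)$, and $\rho\iso\pi_1\otimes\pi_2$, which is the first assertion.

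For the final ``in particular'' clause, taking dimensions in $\rho\iso\pi_1\otimes\pi_2$ gives $\dim\rho = \dim\pi_1\cdot\dim\pi_2$. Since $U(\F)\subseteq\ker(\pi_2)$, the representation $\pi_2$ factors through $G(\F)/U(\F) = (G/U)(\F)$, and it remains irreducible there, so $\dim\pi_2 \in \dimirr(G(\F)/U(\F))$. For $\pi_1$: since $\pi_1\downarrow_{U(\F)}$ is irreducible, $\dim\pi_1 = \dim(\pi_1\downarrow_{U(\F)}) \in \dimirr(U(\F))$. Hence $\dim\rho \in \dimirr(G(\F)/U(\F))\cdot\dimirr(U(\F))$ in the sense of the elementwise product of sets defined in Subsection~\ref{subsec: Preliminaries}.

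I do not anticipate a serious obstacle: the proof is essentially a matter of verifying that the hypotheses line up, the only mildly delicate point being the bookkeeping that $\dim(G/U) < 1 + \dim G$ so that Lemma~\ref{H^2=1} applies with the stated constant. The one place to be careful is to confirm that $U(\F)$ really is a $p$-group without smuggling in a largeness-of-$p$ assumption, since the corollary's only constraint on $p$ is $p > C_{H^2}(1+\dim G)$; this is handled by the elementary observation on unipotent matrices over $\F$ noted above.
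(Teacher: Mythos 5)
Your proposal is correct and follows exactly the route the paper takes: the paper's proof is a one-line appeal to Lemmas \ref{H^2 is good} and \ref{H^2=1}, and your write-up simply makes the implicit verifications explicit (normality and $p$-group property of $U(\F)$, the identification $G(\F)/U(\F)=(G/U)(\F)$ via Lemma \ref{G/H(Fp)}, reductivity of $G/U$, and the dimension count). No gaps.
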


\begin{proof}
The claim follows immediately from Lemmas \ref{H^2 is good} and \ref{H^2=1}. 
\end{proof}

\subsection{The non-isotypic case}
We would now like to handle the second case when the restriction to $U(\F)$ is non-isotipic. To do that, we need to prove that $T(\chi)$ is the $\F$-points of some algebraic subgroup. For that, we first need to understand $\chi$, i.e. the irreducible characters of $U(\F)$. Fortunately for us, for large enough $p$, we have a nice characterization of the irreducible characters of $U(\F)$ called ``Kirillov's orbit method'': 

\begin{thm}[Kirillov's orbit method, {\cite[Theorems 1,2]{orbit}}]
Let $n$ be a positive integer and let $p>n$ be a prime number. Let $\F$ be a finite field of characteristic $p$ and let $\varphi:\F\to\C^\times$ be a non trivial character. Let $\g$ be a nilpotent lie sub-algebra of $M_n(\Fc)$ that is defined over $\F$. 
Let $G:=\exp(\g)$. Then there is a bijection between irreducible characters of $G(\F)$ and orbits $\omega$ of the coajoint action of $G(\F)$ on $\g(\F)$, sending an orbit $\omega$, to the character:
$$\chi(g):=\frac{1}{\sqrt{|\Omega|}}\sum_{\omega \in \Omega} \varphi(\omega(\log(g))).$$ 
\end{thm}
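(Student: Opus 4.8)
The plan is to reprove this known result (Kirillov's orbit method, here in the finite-field, large-characteristic setting) by the classical polarization-and-induction argument, which takes place entirely inside the nilpotent Lie algebra $\g$ once the $\exp$/$\log$ dictionary is available. First I would record what the hypothesis $p>n$ buys us. Since in this paper a nilpotent Lie subalgebra of $M_{n}(\Fc)$ consists of nilpotent matrices, Engel's theorem conjugates $\g$ into the strictly upper triangular matrices, so $\g$ is nilpotent of class $<n$; hence, by Lemma \ref{log is iso}, $\exp\colon\g\to G$ and $\log\colon G\to\g$ are mutually inverse $\F$-morphisms, and the Baker--Campbell--Hausdorff identity $\exp X\cdot\exp Y=\exp(X\ast Y)$ holds over $\F$, since the series $X\ast Y$ has no terms of degree $\ge n$ and the denominators of its terms have all prime factors at most $n$, hence are invertible in $\F$. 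Consequently $G(\F)$ is a finite $p$-group, $\log$ is an $\F$-rational bijection $G(\F)\to\g(\F)$ that is conjugation-equivariant for $\mathrm{Ad}$ (Remark \ref{exp of conjugate}), and for any subalgebra $\m\subseteq\g$ a linear functional $f\in\g(\F)^{\ast}$ with $f([\m,\m])=0$ defines a genuine linear character $\psi_{f}\colon\exp(\m)(\F)\to\C^{\times}$ by $\psi_{f}(\exp X)=\varphi(f(X))$.

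Next I would set up the induced representations. For $f\in\g(\F)^{\ast}$ put $B_{f}(X,Y)=f([X,Y])$; its radical is the stabilizer subalgebra $\g_{f}$, and $B_{f}$ induces a nondegenerate alternating form on $\g/\g_{f}$, so $\dim(\g/\g_{f})$ is even. As in the classical Vergne construction there is a polarization at $f$: a subalgebra $\m$ with $\g_{f}\subseteq\m\subseteq\g$, $f([\m,\m])=0$, and $\dim\m=\tfrac12(\dim\g+\dim\g_{f})$; using a $\mathrm{Gal}(\Fc/\F)$-stable flag of ideals of $\g$ one can take $\m$ defined over $\F$. Set $M=\exp(\m)$, a closed $\F$-subgroup with $[G(\F):M(\F)]=|\F|^{\,\dim\g-\dim\m}=|\F|^{\frac12\dim\Omega_{f}}=\sqrt{|\Omega_{f}|}$, where $\Omega_{f}$ is the $G(\F)$-coadjoint orbit of $f$, and define $\rho_{f}:=\psi_{f}\uparrow^{G(\F)}$. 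The heart of the argument is to prove, by induction on $\dim\g$, that $\rho_{f}$ is irreducible, that $\rho_{f}\cong\rho_{f'}$ exactly when $f$ and $f'$ lie in the same coadjoint orbit (in particular $\rho_{f}$ is independent of the polarization), and that every $\pi\in\irr(G(\F))$ is some $\rho_{f}$. The induction proceeds by choosing a codimension-one ideal $\mathfrak{z}\subseteq\g$ and distinguishing whether $f|_{\mathfrak{z}}$ is $G(\F)$-fixed: in the fixed case one descends to the smaller algebra $\g/(\ker f\cap\mathfrak{z})$, and in the non-fixed case a Mackey / Stone--von Neumann type reduction passes to the stabilizer of $f|_{\mathfrak{z}}$, again a smaller nilpotent algebra, with irreducibility checked by Mackey's criterion using that the conjugates of $\psi_{f}$ disagree with $\psi_{f}$ off $M(\F)$. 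Surjectivity is then a counting matter: $\log$ identifies conjugacy classes of $G(\F)$ with $\mathrm{Ad}$-orbits on $\g(\F)$, and a short Fourier computation over $\F$ equates the number of $\mathrm{Ad}$-orbits on $\g(\F)$ with the number of coadjoint orbits on $\g(\F)^{\ast}$.

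Finally I would compute the character. By the formula for an induced character together with $\log(x^{-1}gx)=\mathrm{Ad}(x^{-1})\log g$,
\[
\chi_{\rho_{f}}(g)=\frac{1}{|M(\F)|}\sum_{\substack{x\in G(\F)\\ x^{-1}gx\in M(\F)}}\varphi\bigl((\mathrm{Ad}^{\ast}(x)f)(\log g)\bigr),
\]
and a standard manipulation using that $\m$ is a polarization (so the inner $M(\F)$-averaging of $\varphi$ is either $0$ or $|M(\F)|$) collapses the right-hand side to $\tfrac{1}{\sqrt{|\Omega_{f}|}}\sum_{\omega\in\Omega_{f}}\varphi(\omega(\log g))$, the normalizing factor being $[G(\F):M(\F)]^{-1}$. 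Evaluating at $g=1$ gives $\dim\rho_{f}=\sqrt{|\Omega_{f}|}=|\F|^{\frac12\dim\Omega_{f}}$, a power of $|\F|$ --- the fact relied on in the sketch \ref{scketch of the proof of second big thm}.

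The step I expect to be the main obstacle is the inductive irreducibility-and-classification argument, combined with the rationality bookkeeping of keeping $\mathfrak{z}$, the polarization $\m$, and the successive stabilizers all defined over $\F$ at each stage of the induction, and verifying that $p>n$ (not merely ``$p$ large'') genuinely suffices at every point: it is what makes $\exp$ and $\log$ mutually inverse, makes BCH valid over $\F$, turns $\varphi\circ f$ into an honest character of $\exp(\m)(\F)$, and removes the metaplectic sign obstruction in the Stone--von Neumann step (automatic since $p$ is odd in the only nonabelian case).
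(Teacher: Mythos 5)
The paper does not prove this statement at all: it is imported verbatim from the cited reference (\cite[Theorems 1,2]{orbit}) and used as a black box, so there is no internal proof to compare yours against. What you have written is, in outline, the standard Kirillov--Kazhdan proof for finite nilpotent groups of class less than $p$ --- exp/log dictionary with BCH (valid since the class is $<n<p$), Vergne polarizations over $\F$ built from an $\F$-rational flag of ideals, $\rho_f=\psi_f\uparrow^{G(\F)}$, Mackey-type induction on $\dim\g$ for irreducibility and orbit-equivalence, a Burnside/Fourier count for surjectivity, and the collapse of the induced character to the orbit sum --- which is essentially the argument of the cited source, so the strategy is sound and nothing in it conflicts with the way the theorem is used in the paper (in Corollaries \ref{the thm for unipotent groups} and \ref{orbit cor}). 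Two points in your sketch are stated as if free but need the same class-$<p$ control you invoke for BCH: the identification $\mathrm{Stab}_{G(\F)}(f)=\exp(\g_f)(\F)$ (equivalently $|\Omega_f|=|\F|^{\dim\g-\dim\g_f}$), which is what makes $[G(\F):M(\F)]=\sqrt{|\Omega_f|}$ and hence the normalization in the character formula come out right, and the final ``standard manipulation'' collapsing the induced character to $\frac{1}{\sqrt{|\Omega_f|}}\sum_{\omega\in\Omega_f}\varphi(\omega(\log g))$, which is a genuine Fourier computation (alternatively one can bypass part of your induction by checking directly that the functions $\chi_\Omega$ are orthonormal class functions with $\chi_\Omega(1)>0$). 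Also note the statement's ``coadjoint action on $\g(\F)$'' should be read as the action on $\g(\F)^{\ast}$, as you correctly do. Since the result is cited rather than proved in the paper, your sketch is acceptable as a reconstruction, but as written it defers exactly the hardest steps (the inductive irreducibility/classification and the rationality bookkeeping), which is where a complete write-up would have to do real work.
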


From this theorem, we have two immediate corollaries:

\begin{cor}
\label{the thm for unipotent groups}
%diFp - dimension is |F|-power
There is an increasing function $C_{diFp}:\N^2\to \N$ with the following property:
Let $n,M$ be positive integers. Let $\F$ be a finite field of characteristic $p>C_{diFp}(n,M)$. Let $U$ be a unipotent subgroup of $(\GL_n)_{\Fc}$ that is defined over $\F$. Assume that $U$ has complexity at most $M$. Then $\dimirr (U(\F))\subseteq\{|\F|^\frac{k}{2}\big|0\leq k\leq n^2\}$.
\end{cor}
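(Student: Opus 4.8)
The plan is to combine Kirillov's orbit method with Corollary \ref{cardinality of a unipotent group is a power of |F|}, which says that a unipotent group of bounded complexity has $|\F|^{k}$ many $\F$-points.

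First I would choose $C_{diFp}(n,M)$ large enough that $C_{diFp}(n,M)\ge n$ and $C_{diFp}(n,M)\ge C_{lila}(n,M)$. Then for $p>C_{diFp}(n,M)$, Lemma \ref{U=exp(g)} shows that $\g:=\log(U)$ is a nilpotent Lie subalgebra of $M_n(\Fc)$ defined over $\F$ with $U=\exp(\g)$, and hence, by Lemma \ref{log is iso}, $U$ is connected; moreover, applying Lemma \ref{cmp(Im phi)} to the map $\log$ (which on unipotent matrices is polynomial of degree $<n$, hence of complexity bounded in terms of $n$) gives $\cmp(\g)\le M_{\g}$ for some $M_{\g}=M_{\g}(n,M)$. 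Next I would invoke Kirillov's orbit method, as stated above (legitimate since $p>n$): every irreducible character of $U(\F)$ is $\chi_{\Omega}$ for some coadjoint orbit $\Omega$ of $U(\F)$, and since $\log(1)=0$ and $\varphi(0)=1$, evaluating the character formula at the identity gives $\dim\rho_{\Omega}=\chi_{\Omega}(1)=\sqrt{|\Omega|}$. Thus the whole statement reduces to showing that every coadjoint orbit $\Omega$ of $U(\F)$ satisfies $|\Omega|=|\F|^{k}$ for some integer $0\le k\le n^2$.

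To establish this I would fix a point $\omega_0$ in the $\F$-vector space $V$ on which $U$ acts coadjointly, where $\dim_{\F}V=\dim\g\le n^2$, and write $\Omega=U(\F)\cdot\omega_0$. Let $S:=\mathrm{Stab}_U(\omega_0)$ be the stabilizer as an algebraic subgroup of $U$; it is defined over $\F$, and as a closed subgroup of the unipotent group $U$ it is itself unipotent, with $\mathrm{Stab}_{U(\F)}(\omega_0)=S(\F)$, so by the orbit-stabilizer theorem $|\Omega|=|U(\F)|/|S(\F)|$. The key point is that $\cmp(S)$ is bounded in terms of $n$ and $M$ only. Indeed, the coadjoint action is a morphism over $\F$ whose degree is bounded purely in terms of $n$: on $U$ one has $\det u=1$, so $u^{-1}=\mathrm{adj}(u)$ is polynomial of degree $n-1$ in $u$, the adjoint action $\Ad(u^{-1})$ is $X\mapsto\mathrm{adj}(u)\,X\,u$ (polynomial of degree $\le n+1$ in $u$), and so, fixing an $\F$-basis of $V$ arising from one of $\g(\F)$, the coadjoint action on $V$ is given in that basis by a matrix whose entries are polynomials of degree $\le n+1$ in the entries of $u$. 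Combining this degree bound with $\cmp(U)\le M$, $\cmp(\g)\le M_{\g}$, and $\dim V\le n^2$, the graph of the coadjoint action morphism $U\times V\to V$ has complexity bounded in terms of $n$ and $M$; and $S$ is obtained from $U$ by adjoining at most $n^2$ of the associated equations (one per coordinate of $V$), so $\cmp(S)\le M'(n,M)$ for an explicit monotone function $M'$.

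Finally I would enlarge $C_{diFp}(n,M)$ once more so that also $C_{diFp}(n,M)\ge C_{lila}(n,M')$; then Corollary \ref{cardinality of a unipotent group is a power of |F|} applies to both $U$ (complexity $\le M$) and $S$ (complexity $\le M'$), giving $|U(\F)|=|\F|^{m}$ and $|S(\F)|=|\F|^{j}$ with $0\le j\le m$ and $m=\dim\g\le n^2$. Hence $|\Omega|=|\F|^{m-j}$ with $0\le m-j\le n^2$, so $\dim\rho_{\Omega}=|\F|^{(m-j)/2}\in\{|\F|^{k/2}\mid 0\le k\le n^2\}$, which is the desired inclusion; and $C_{diFp}$ is increasing in both arguments because $M_{\g}$, $M'$ and $C_{lila}$ all are. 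The hard part will be the complexity estimate for the stabilizer $S$ — that is, verifying that the coadjoint action of $U$ is a morphism of complexity bounded only in terms of $n$ and $\cmp(U)$ — after which the conclusion follows by directly assembling Lemma \ref{U=exp(g)}, Kirillov's orbit method, the orbit-stabilizer identity, and Corollary \ref{cardinality of a unipotent group is a power of |F|}.
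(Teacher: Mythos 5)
Your proposal is correct and follows essentially the same route as the paper's proof: Kirillov's orbit method gives $\dim\rho=\sqrt{|\Omega|}$, orbit--stabilizer gives $|\Omega|=|U(\F)|/|(\mathrm{Stab}_U(f))(\F)|$, and both cardinalities are powers of $|\F|$ by Corollary \ref{cardinality of a unipotent group is a power of |F|} once the stabilizer is seen to have complexity bounded in terms of $n,M$. The only difference is that you spell out the complexity bound for the coadjoint action and the stabilizer, which the paper dismisses as ``clearly of bounded complexity''.
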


\begin{proof}
We may assume $p>C_{lila}(n,M)$. By Lemma \ref{U=exp(g)}, we get that $\uu:=\log(U)$ is a Lie algebra. Let $\chi$ be an irreducible character of $U(\F)$. Let $\Omega$ be the orbit that $\chi$ is coming from the above Theorem. We now have: \[(*)\;\chi(1)=\frac{\sum_{\omega \in \Omega}1}{\sqrt{|\Omega|}}=\sqrt{|\Omega|}.\] Now, take $f\in \Omega$. We have: \[(**)\;|\Omega|=\frac{|U(\F)|}{|Stab_{U(\F)}(f)|}=\frac{|U(\F)|}{|(Stab_{U}(f))(\F)|}.\] Since the action $\Ad^*$ of $U$ on $\uu^*$ is clearly of bounded complexity (i.e. has complexity that is bounded by some value dependent only on $n,M$), we get that $Stab_U(f)$ is of bounded complexity and therefore, for large enough $p$, we get by Lemma \ref{cardinality of a unipotent group is a power of |F|}, that $|(Stab_U(f))(\F)|=|\F|^k$ for some $0\leq k\leq n^2$. By $(*),(**)$, the claim follows.
\end{proof}

\begin{cor}
\label{orbit cor}
Let $n$ be a positive integer and let $p>n$ be a prime number. Let $\F$ be a finite field of characteristic $p$ and let $G$ be a linear algebraic subgroup of $(\GL_n)_{\Fc}$ which is defined over $\F$. Let $U$ be its unipotent radical and assume that $\uu:=\log(U)$ is a Lie algebra. Let $\chi \in \irr(U(\F))$. Then $T(\chi)$ is of the form $Stab_{G(\F)}(\Omega)$ where $\Omega$ is a coadjoint orbit of $U(\F)$ in $\uu^*$.
\end{cor}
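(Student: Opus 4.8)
## Proof proposal for Corollary \ref{orbit cor}

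The plan is to unwind the definition of the inertia subgroup $T(\chi)$ and match it against the Kirillov parametrization, exploiting the fact that $G$ normalizes $U$ and hence acts on $\uu$ and $\uu^*$ compatibly with the coadjoint action of $U$. First I would recall that $T(\chi) = \{g \in G(\F) : \chi^g = \chi\}$, where $\chi^g$ denotes the character $u \mapsto \chi(g^{-1}ug)$ of $U(\F)$ (this makes sense because $U$, being the unipotent radical, is normal in $G$, so $U(\F)$ is normal in $G(\F)$). Next I would invoke Kirillov's orbit method (the theorem immediately preceding the corollary): since $p > n$ and $\uu = \log(U)$ is assumed to be a Lie algebra defined over $\F$, with $U = \exp(\uu)$, there is a bijection $\Omega \mapsto \chi_\Omega$ between coadjoint $U(\F)$-orbits on $\uu^*(\F)$ and $\irr(U(\F))$, explicitly $\chi_\Omega(u) = |\Omega|^{-1/2}\sum_{\omega \in \Omega}\varphi(\omega(\log u))$. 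Let $\Omega$ be the orbit with $\chi = \chi_\Omega$.

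The key step is to track how conjugation by $g \in G(\F)$ interacts with this formula. Conjugation by $g$ on $U$ corresponds, via $\log$ (which is $G$-equivariant by Remark \ref{exp of conjugate}, since $\log(gug^{-1}) = g\log(u)g^{-1}$), to the adjoint action $\Ad(g)$ on $\uu$; dually $g$ acts on $\uu^*$ by the coadjoint action $\Ad^*(g)$, namely $(\Ad^*(g)\omega)(X) = \omega(\Ad(g^{-1})X) = \omega(g^{-1}Xg)$. A direct substitution in Kirillov's formula then gives $\chi_\Omega^g = \chi_{\Ad^*(g)\Omega}$: indeed
\[
\chi_\Omega(g^{-1}ug) = \frac{1}{\sqrt{|\Omega|}}\sum_{\omega\in\Omega}\varphi\big(\omega(\log(g^{-1}ug))\big) = \frac{1}{\sqrt{|\Omega|}}\sum_{\omega\in\Omega}\varphi\big(\omega(g^{-1}(\log u)g)\big) = \frac{1}{\sqrt{|\Omega|}}\sum_{\omega'\in \Ad^*(g)\Omega}\varphi\big(\omega'(\log u)\big),
\]
using that $|\Ad^*(g)\Omega| = |\Omega|$ since $\Ad^*(g)$ is a bijection of $\uu^*(\F)$. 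By the injectivity half of the Kirillov bijection, $\chi^g = \chi$ holds if and only if $\Ad^*(g)\Omega = \Omega$. Hence $T(\chi) = \{g \in G(\F) : \Ad^*(g)\Omega = \Omega\} = \mathrm{Stab}_{G(\F)}(\Omega)$, which is exactly the asserted form.

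I would also spend a sentence checking the two compatibility points that make the argument go through: that $\Ad^*(g)$ genuinely permutes the coadjoint $U(\F)$-orbits (it does, because $g$ normalizes $U(\F)$, so $\Ad^*(g)$ conjugates the $U(\F)$-action on $\uu^*$ to itself and thus sends orbits to orbits), and that $\log$ carries $U(\F)$ into $\uu(\F)$ bijectively (from Lemma \ref{log is iso} and the subsequent remark, using $p > n$). Neither is a real obstacle. The only place one must be slightly careful is the bookkeeping of inverses and left-versus-right actions in the conjugation/coadjoint conventions, so that ``$\chi^g = \chi$'' translates to the stabilizer of $\Omega$ and not of some twisted orbit; once the conventions are fixed consistently this is routine, and I expect no substantive difficulty in the proof.
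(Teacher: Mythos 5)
Your argument is correct and is exactly the standard unwinding that the paper has in mind: it states this corollary as an immediate consequence of Kirillov's orbit method (no separate proof is given), relying precisely on the $G(\F)$-equivariance of the Kirillov bijection via $\log(gug^{-1})=g\log(u)g^{-1}$, which you spell out. Your side remarks (that $\Ad^*(g)$ permutes coadjoint $U(\F)$-orbits because $G$ normalizes $U$, and that the inverse/convention bookkeeping is harmless since the stabilizer condition defines a subgroup) are the right points to check, and nothing is missing.
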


We now have the right tools to prove what we want about $T(\chi)$.
\begin{lemma}
\label{the non-isotypic case}
Let $n, M$ be positive integers. Then there exist a constant $C_{ni}=C_{ni}(n,M)$ s.t. for large enough prime $p$, we have the following: \newline
Let $\F$ be a finite field of characteristic $p$. Let $G$ be a connected algebraic subgroup of $(\GL_n)_{\Fc}$ that is defined over $\F$, and with complexity at most $M$, and let $U$ be its unipotent radical. Let $\chi$ be an irreducible character of $U(\F)$. Then $T(\chi)$ is the $\F$-points of an algebraic subgroup of $G$ that is defined over $\F$ and has complexity at most $C_{ni}(n, M)$
\end{lemma}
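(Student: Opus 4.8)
The plan is to combine Corollary \ref{orbit cor} with the complexity-control lemmas from Section \S\ref{sec: algebraic geometry}. First I would arrange the preliminary hypotheses: assume $p$ large enough that Lemma \ref{U=exp(g)} applies to $U$ (with the complexity bound on $U$ coming from Lemma \ref{cmp(U) is small}, whose input is $M$), so that $\uu := \log(U)$ is a Lie algebra defined over $\F$ and $U = \exp(\uu)$. By Corollary \ref{orbit cor}, for any $\chi \in \irr(U(\F))$ the inertia group $T(\chi)$ equals $\mathrm{Stab}_{G(\F)}(\Omega)$ for a coadjoint orbit $\Omega$ of $U(\F)$ acting on $\uu^*$. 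The key observation is that the coadjoint action of $G$ on $\uu^*$ is an algebraic action of bounded complexity: $G$ preserves $\uu$ by conjugation (as in the proof of Lemma \ref{cmp(U) is small}), hence acts on $\uu^*$, and the complexity of this action is bounded in terms of $n$ and $M$ alone (it is built from matrix multiplication and the linear map $\log$ restricted to $U$, all of bounded complexity).

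Next I would identify $T(\chi)$ with the $\F$-points of an algebraic stabilizer. Fix $f \in \Omega \subseteq \uu^*(\F)$. The naive candidate is $\mathrm{Stab}_G(f)$, but $T(\chi)$ stabilizes the whole orbit $\Omega$, not the point $f$; the relation is $\mathrm{Stab}_{G}(f) \subseteq$ (the algebraic group whose $\F$-points give $T(\chi)$) with index at most $|\Omega| \le |U(\F)|$, which is not bounded. So instead I would define $H_\chi$ to be the algebraic subgroup of $G$ stabilizing the Zariski closure of the $U$-orbit $\Ad^*(U)f$ as a subvariety of $\uu^*$ — equivalently, the stabilizer in $G$ of the image of $f$ in the GIT-type quotient, or more concretely the transporter-type subgroup $\{g \in G \mid \Ad^*(g)(\Ad^*(U)f) = \Ad^*(U)f\}$. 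Since $\Ad^*(U)f$ is the image of $U$ under the bounded-complexity morphism $u \mapsto \Ad^*(u)f$, its closure has bounded complexity by Lemma \ref{cmp(Im phi)}; and the stabilizer in $G$ of a subvariety of bounded complexity is again of bounded complexity (it is cut out inside $G \times \uu^*$ by the defining equations of that subvariety, pulled back, plus a projection — another application of Lemma \ref{cmp(Im phi)} and Lemma \ref{cmp(X^0)}). Then $H_\chi$ is a closed subgroup of $G$ defined over $\F$ with $\cmp(H_\chi) \le C_{ni}(n,M)$, and I must check $H_\chi(\F) = T(\chi)$: the inclusion $T(\chi) \subseteq H_\chi(\F)$ is immediate since $T(\chi)$ permutes $\Omega = (\Ad^*(U)f)(\F)$ and an $\F$-rational automorphism fixing the $\F$-points of an $\F$-variety of large enough cardinality fixes the variety (here one uses Lang–Weil / Corollary \ref{lang weil cor}-type control, or that $\Omega$ is Zariski-dense in its closure); conversely, if $g \in H_\chi(\F)$ then $\Ad^*(g)$ permutes the closure and hence its $\F$-points, i.e. permutes $\Omega$, so $g \in T(\chi)$.

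The main obstacle I anticipate is exactly this last identification $H_\chi(\F) = T(\chi)$, because it requires that the $\F$-points $\Omega$ of the orbit closure be "visible" enough to pin down the stabilizer — one needs $\Omega$ Zariski-dense in $\overline{\Ad^*(U)f}$, which follows since $\Ad^*(U)f$ is already a constructible orbit and, being a unipotent-group orbit of bounded complexity over a large finite field, has the expected number of $\F$-points by Lang–Weil, so it cannot be contained in any proper subvariety of its closure. A secondary subtlety is making sure the complexity bound on $H_\chi$ is genuinely uniform: every construction above (preserving $\uu$, the map $u \mapsto \Ad^*(u)f$, taking closures, taking stabilizers inside $G$) must be exhibited as a morphism or variety whose complexity is controlled by $C_{coi}$, $C_{ic}$, and $C_{\cmp U}$ evaluated at quantities depending only on $n$ and $M$; chaining these monotone functions gives the desired $C_{ni}(n,M)$. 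Finally I would note that "large enough $p$" here means $p$ exceeding $\max(n, C_{lila}(n, C_{\cmp U}(M)))$ and whatever threshold the Lang–Weil step demands, all of which depend only on $n$ and $M$.
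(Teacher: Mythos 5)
Your overall route is the paper's: apply Corollary \ref{orbit cor}, realize $T(\chi)$ as the stabilizer in $G(\F)$ of a coadjoint $U(\F)$-orbit, bound the complexity of the algebraic orbit $\Ad^*(U)f$ via Lemmas \ref{cmp(U) is small} and \ref{cmp(Im phi)}, take the algebraic stabilizer of that orbit as the candidate subgroup, and use Lang--Weil (Corollary \ref{lang weil cor}) to match its $\F$-points with $T(\chi)$. But there is a genuine gap at the matching step. You implicitly identify the finite coadjoint orbit $\Omega = U(\F)\cdot f$ with the full set of $\F$-points of the algebraic orbit (you even write $\Omega = (\Ad^*(U)f)(\F)$), and your fallback justification --- that $\Omega$ is Zariski-dense in $\overline{\Ad^*(U)f}$ --- cannot hold: over a finite field $\Omega$ is a finite set, hence Zariski closed, so it is dense in the orbit closure only when that closure is zero-dimensional. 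Both inclusions of $H_\chi(\F)=T(\chi)$ really hinge on the equality $U(\F)\cdot f = (U\cdot f)(\F)$. For the direction $H_\chi(\F)\subseteq T(\chi)$, a $g$ stabilizing the orbit variety only permutes its $\F$-points, and without the equality it could move a point of $\Omega$ to an $\F$-point of the orbit outside $\Omega$. For the direction $T(\chi)\subseteq H_\chi(\F)$, the correct tool is not density but Corollary \ref{lang weil cor} applied to the two geometrically irreducible bounded-complexity varieties $U\cdot f$ and $g\cdot(U\cdot f)$; to invoke it you must know their $\F$-point sets agree, which again requires $(U\cdot f)(\F)=\Omega$.

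The paper supplies exactly this missing ingredient: $\mathrm{Stab}_U(f)$ is a unipotent subgroup of bounded complexity, so for $p$ large Lemma \ref{U=exp(g)} makes it connected, and then Lemma \ref{G/H(Fp)} gives $U(\F)/\mathrm{Stab}_U(f)(\F)=(U/\mathrm{Stab}_U(f))(\F)$, i.e.\ the orbit map is surjective on $\F$-points, so $U(\F)\cdot f=(U\cdot f)(\F)$. (In cohomological terms this is the vanishing of $H^1(\F,\mathrm{Stab}_U(f))$ for a connected group over a finite field; some such input is unavoidable.) Once you insert this step your argument closes. Two smaller remarks: taking Zariski closures is unnecessary, since an orbit of a unipotent group over an algebraically closed field is already closed (the paper cites Springer for this), and for the complexity bound on the stabilizer the paper gives the concrete description $\mathrm{Stab}_G(U\cdot f)=\{g\in G\mid g\cdot f\in U\cdot f\}$ (using normality of $U$ in $G$), whose ideal is generated by the pullbacks $g\mapsto h_i(g\cdot f)$ of generators $h_i$ of the ideal of $U\cdot f$ --- a cleaner justification than the transporter construction you sketch.
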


\begin{proof}
Define $\uu:=\log(U)$. By Lemmas \ref{U=exp(g)} and \ref{cmp(U) is small}, for large enough $p$, we have that $\uu$ is a Lie algebra defined over $\F$. Take $f\in \uu^*(\F)$ and let $\Omega := U(\F)\cdot f$ (where the action of $U$ here is $\Ad^*$) s.t. $T(\chi)=Stab_{G(\F)}(U(\F))$ (This is possible by the previous corollary). Recall that, over algebraically closed field, any orbit of an action of a unipotent group is closed (see for example proposition 2.4.14 in \cite{Springer}). That means that $U\cdot f$ is closed. By Lemma \ref{cmp(U) is small}, we have that the complexity of $U$ is bounded by some value dependent only on $M,n$. By Lemma \ref{cmp(Im phi)} (for the map $u\mapsto u\cdot f$ from $U$ to $U\cdot f$, which is clearly of bounded complexity), we have the same for $U\cdot f$. By Lemma \ref{U=exp(g)}, for large enough $p$, we have that $Stab_U(f)$ is connected. So by Lemma $\ref{G/H(Fp)}$ we have that: \[U(\F)/Stab_U(f)(\F)=(U/Stab_U(f))(\F),\] which is equivalent to: \[U(\F) \cdot f = (U\cdot f)(\F).\] So we have \[Stab_{G(\F)}((U\cdot f)(\F)) = Stab_{G(\F)}(U(\F)\cdot f) = T(\chi).\] We claim that also: \[Stab_{G(\F)}((U\cdot f)(\F))=(Stab_G(U\cdot f))(\F).\] Indeed, the direction \quote{$\supseteq$} is clear. For the other direction, it is enough to show that for any $g\in Stab_{G(\F)}((U\cdot f)(\F))$, we have: $g\cdot (U \cdot f) = U \cdot f$. But by Corollary \ref{lang weil cor}, that is true for large enough $p$ ($U\cdot f$ and $g\cdot U\cdot f$ are geometrically irreducible varieties because they are closed and are images of maps with geometrically irreducible varieties as their domain). So overall we have $T(\chi) = (Stab_G(U\cdot f))(\F)$. Therefore, it is enough to prove that $Stab_G(U\cdot f)$ is a closed subgroup of bounded complexity. Indeed, since $U$ is a normal subgroup of $G$, we get that: $Stab_G(U\cdot f)=\{g\in G|g\cdot f\in U\cdot f\}$. Now, if $h_1(x),...,h_m(x)$ generate the ideal of functions vanishing on $U\cdot f$, then $g\mapsto h_i(g\cdot f)$ ($i\in \{1,...,m\}$) generate the ideal of functions vanishing on $Stab_G(U \cdot f)$. That proves the lemma.
\end{proof}

\subsection{The case where G is reductive}
We now want to handle the case when $G$ is reductive. To do so, we first need to bound the number of connected reductive groups of bounded dimension. For that, we will state some definitions and lemmas:

\begin{defn}(Based root datum)
Let $G$ be a reductive group defined over an algebraically closed field $\K$. Let $R$ be the root datum of $G$. Let $\Delta$ be a base of the root system of $R$. We call the pair $(R,\Delta)$ ``the based root datum'' of $G$ and the group: $\{\sigma\in Aut(R)|\sigma(\Delta)=\Delta\}$ the ``automorphism group of the based root datum'' of $G$. For the fact that the pair $(R,\Delta)$ is independent (up to isomorphism) of the choice of $\Delta$, see for example Remark 7.1.2 in \cite{root_datum}. 

\end{defn}

\begin{lemma}[{\cite[Lemma B.3.1]{BM}}]
\label{there is a finite number of complex connected reductive groups with bounded dimension}
%rn - reductive number
There exist an increasing function: $C_{rn}:\N\rightarrow\N$ s.t. for any positive integer $n$ and algebraically closed field $\K$, the number of connected reductive groups over $\K$ with dimension lower then $n$ (up to an isomorphism), is less then $C_{rn}(n)$. 
\end{lemma}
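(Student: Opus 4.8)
The plan is to reduce the statement to a finiteness statement about based root data of bounded "size". First I would recall the classification theorem for reductive groups over an algebraically closed field $\K$: the functor $G \mapsto (R(G), \Delta)$ from connected reductive groups over $\K$ (up to isomorphism) to based root data (up to isomorphism) is a bijection (this is the Chevalley/Demazure classification; see \cite{root_datum}). Hence it suffices to bound the number of isomorphism classes of based root data whose associated group has dimension at most $n$. The key observation is that $\dim G = \mathrm{rk}(X) + |\Phi|$, where $X$ is the character lattice (so $\mathrm{rk}(X)$ is the reductive rank) and $\Phi$ is the set of roots. Therefore $\dim G \le n$ forces $\mathrm{rk}(X) \le n$ and $|\Phi| \le n$; in particular the semisimple rank $r = |\Delta|$ is at most $n$.

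Next I would argue that there are only finitely many semisimple root systems of rank at most $n$ (indeed the reduced irreducible ones are classified by Dynkin diagrams of types $A$–$G$, and a rank-$\le n$ system is a direct sum of boundedly many of these), so the pair $(\Phi, \Delta)$ ranges over a finite set depending only on $n$. The remaining data is the embedding of this root system into a lattice $X$ of rank $\le n$ together with the coroot data in the dual lattice $X^\vee$; concretely, a based root datum of rank $d$ with root system of rank $r$ is determined by $X \cong \Z^d$, the $r$ vectors in $X$ giving the simple roots, and the $r$ vectors in $X^\vee = \Z^d$ giving the simple coroots, subject to the Cartan-integer constraints. The crucial point is that once the abstract root system $(\Phi,\Delta)$ is fixed, the simple roots and simple coroots are lattice vectors whose pairwise pairings are the (bounded) Cartan integers, and this system of integrality conditions — together with the requirement that the $\alpha_i$ span a saturated-enough sublattice so that $X/\Z\Phi$ has rank $d - r$ — has, up to the action of $GL_d(\Z)$ (change of basis of $X$) and the finite automorphism group of $(\Phi,\Delta)$, only finitely many solutions, with the count bounded in terms of $d \le n$, $r \le n$, and the bounded Cartan integers. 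I would make this precise by noting that we may fix the root lattice $\Z\Phi$ inside $X$, that $X$ is sandwiched $\Z\Phi \subseteq X \subseteq P$ where $P$ is the (finite-index, index bounded by the determinant of the Cartan matrix) weight lattice on the semisimple part — giving finitely many choices there — and that the extra $\Z^{d-r}$ "central torus" direction together with the coweight lattice contributes only finitely many gluing possibilities once we bound $d$.

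The main obstacle I expect is precisely this last bookkeeping step: controlling the lattice $X$ and the coroot embedding up to isomorphism. The subtlety is that $X$ is not simply $\Z\Phi \oplus \Z^{d-r}$ — the central torus and the maximal torus of the semisimple part can be "glued" along a finite subgroup — so one must check that the number of such gluings is bounded (it is, since it is controlled by $\mathrm{Hom}$ into the bounded-order group $P/\Z\Phi$). Modulo that, everything is a finite search whose size is visibly a function of $n$ alone, independent of $\K$ (the classification is characteristic-free), which gives the desired increasing function $C_{rn}$. I would cite \cite{BM} and \cite{root_datum} for the precise form of the classification and simply verify the counting bound.
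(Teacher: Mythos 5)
Your argument is essentially correct, but it takes a different route from the paper: the paper does not prove this lemma at all, it quotes it from \cite{BM} (Lemma B.3.1), where it is established over $\C$, and then adds a remark transferring the statement to an arbitrary algebraically closed field $\K$ via the bijection between connected reductive groups and root data and the fact that the dimension is read off from the root datum. Your proposal instead proves the statement directly by counting based root data of bounded size, which has the advantage of being self-contained and of making the independence of $\K$ automatic (the classification is characteristic-free, exactly the point of the paper's remark); the cost is that you must carry out the lattice bookkeeping yourself rather than outsourcing it. On that bookkeeping: the identity $\dim G=\mathrm{rk}(X)+|\Phi|$ and the finiteness of reduced root systems of rank at most $n$ are fine, but your sentence ``$X$ is sandwiched $\Z\Phi\subseteq X\subseteq P$'' is literally false in the non-semisimple case, since $\mathrm{rk}(X)=d$ may exceed the semisimple rank $r$; the correct sandwich is $\Z\Phi\oplus X_0\subseteq X\subseteq P\oplus \frac{1}{e}X_0$, where $X_0=\{x\in X:\langle x,\Phi^\vee\rangle=0\}$ and $e=[P:\Z\Phi]$ (the projection of $X$ to the span of $\Phi$ lands in $P$ because pairings with coroots are integral, and multiplying by $e$ then shows the projection to the central direction lands in $\frac{1}{e}X_0$), after which $X$ is determined by a subgroup of the finite group $(P/\Z\Phi)\oplus(\frac{1}{e}X_0/X_0)$, giving a count bounded in terms of $n$ alone. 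You flag exactly this gluing issue as the main obstacle and indicate the right mechanism ($\mathrm{Hom}$ into the bounded group $P/\Z\Phi$), so I regard the proposal as correct modulo writing out that sandwich; one further simplification available to you is that the coroots are uniquely determined by $(X,\Phi)$, so they need not be counted as extra data.
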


\begin{remark}
In \cite{BM}, the lemma is proved for $\K = \C$, but since there exists a bijection between connected reductive groups defined over $\K$ and root data (see Theorems 9.6.2 and 10.1.1 in \cite{Springer}), and since the dimension of the connected reductive group coming from a root datum, is determined only by the root datum and not the field of definition (see corollary 8.1.3(ii) in \cite{Springer}), we get the lemma. 
\end{remark}
\begin{lemma}[{\cite[Equation (1.5.2)]{root_datum}}]
\label{Out is Aut of the based root datum}
Let $G$ be a connected reductive group defined over an algebraically closed field $\K$. Let $\Theta$ be the automorphism group of the based root datum of $G$. Then we have a short exact sequence of abstract groups: $$1\rightarrow G/Z(G)\rightarrow Aut(G) \rightarrow \Theta \rightarrow 1.$$
\end{lemma}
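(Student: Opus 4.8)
The plan is to realize the map $\Aut(G)\to\Theta$ explicitly, show it is surjective, and identify its kernel as $\Inn(G)\cong G/Z(G)$; the exact sequence then follows formally. First, recall that $\Inn(G)$, the group of inner automorphisms, is the image of the homomorphism $G\to\Aut(G)$, $g\mapsto(x\mapsto gxg^{-1})$, whose kernel is exactly $Z(G)$, so $\Inn(G)\cong G/Z(G)$; and it is normal in $\Aut(G)$ since $\phi\circ\mathrm{inn}_g\circ\phi^{-1}=\mathrm{inn}_{\phi(g)}$ for all $\phi\in\Aut(G)$. This handles injectivity at the left of the sequence and normality of the image.

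Next I would construct the homomorphism $\Aut(G)\to\Theta$. Fix a Borel subgroup $B$ of $G$ and a maximal torus $T\subseteq B$; compute the root datum $R$ with respect to $T$, and let $\Delta\subseteq\Phi$ be the base determined by $B$. Given $\phi\in\Aut(G)$, the pair $(\phi(B),\phi(T))$ is again a Borel containing a maximal torus; since all Borels are conjugate and all maximal tori of $B$ are $B$-conjugate, there is $g\in G$ with $\mathrm{inn}_g\circ\phi$ stabilizing both $B$ and $T$. Such an automorphism induces an automorphism of $X^*(T)$ carrying $\Phi$ to $\Phi$ and $\Delta$ to $\Delta$ (it permutes the positive root subgroups), hence an element of $\Theta$; and two automorphisms stabilizing $(B,T)$ that differ by an inner automorphism coming from $N_G(T)\cap B=T$ induce the same element, as a torus acts trivially on $X^*(T)$. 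One checks this yields a well-defined group homomorphism $\Aut(G)\to\Theta$, independent of the auxiliary choices and trivial on $\Inn(G)$.

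The real content — and the main obstacle — is surjectivity and the kernel computation, both resting on the Isomorphism (and Existence) Theorem for reductive groups. Choosing a pinning, i.e. root vectors $x_\alpha$ for each $\alpha\in\Delta$, the Isomorphism Theorem produces for every $\theta\in\Theta$ a unique pinning-preserving automorphism of $G$ inducing $\theta$, giving surjectivity. For the kernel: if $\psi$ stabilizes $(B,T)$ and induces the identity of the based root datum, then $\psi$ acts trivially on $T$ and carries each simple root subgroup to itself by a scalar, so after adjusting by $\mathrm{inn}_t$ for a suitable $t\in T$ (chosen to absorb those scalars) the resulting automorphism preserves the pinning and fixes the root datum, hence equals the identity by the uniqueness clause; thus $\psi\in\Inn(G)$. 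Combining, $\ker(\Aut(G)\to\Theta)=\Inn(G)\cong G/Z(G)$, which gives the asserted short exact sequence. I expect the delicate points to be the precise invocation of the Existence/Isomorphism Theorem (including canonicity of the pinning-preserving lift) and the bookkeeping that makes $\Aut(G)\to\Theta$ genuinely well-defined and multiplicative.
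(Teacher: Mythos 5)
Your proposal is correct and is the standard argument: identify $\Inn(G)\iso G/Z(G)$, use conjugacy of Borel pairs to define $\Aut(G)\to\Theta$ (well-defined since the inner stabilizer of a pair $(B,T)$ is $T$, which acts trivially on $X^*(T)$), and get surjectivity and the kernel from the Isomorphism Theorem with a pinning, the only implicit point being that the scalars on the simple root groups can be absorbed because $\Delta$ is linearly independent, so $T\to\G_m^\Delta$ is surjective. The paper does not prove this lemma at all but quotes it from the cited reference, where essentially this same argument is carried out, so your route matches the source.
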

\begin{lemma}[{\cite[Theorem 22.50]{Milne}}]
\label{there exists a split reductive group}
Let $R$ be a root datum and let $\F$ be a field. There exist an $\F$-split connected reductive group $G$ s.t. $R$ is the root datum of $G$.
\end{lemma}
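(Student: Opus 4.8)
The plan is to reduce the existence theorem to Chevalley's explicit construction of the simple simply connected groups over $\mathbb{Z}$ and then reassemble a group with an arbitrary root datum out of these together with split tori. First I would record two trivial reductions. For any finitely generated free abelian group $\Lambda$ there is a split torus $\mathrm{Spec}\,\F[\Lambda]$ over $\F$ with character lattice $\Lambda$, so root data with empty root system are handled. And if $G_0$ is split connected reductive over the prime field $\F_0\subseteq\F$ with root datum $R$, then $G_0\times_{\F_0}\F$ is split connected reductive over $\F$ with the same root datum, since a split maximal torus, the root subgroups, and all structure constants base change; indeed the Chevalley construction below produces a single smooth affine group scheme over $\mathbb{Z}$ that works simultaneously for every $\F$. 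Hence it suffices to realize each root datum $R$ over $\mathbb{Z}$.

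Next I would reduce to the simple simply connected case. Write $R=(X,\Phi,X^\vee,\Phi^\vee)$ and decompose $\Phi=\Phi_1\sqcup\dots\sqcup\Phi_r$ into irreducible root subsystems. It is a purely combinatorial fact about root data that $R$ can be built from the simply connected semisimple datum $R^{\mathrm{sc}}$ with root system $\Phi$ (whose character lattice is the weight lattice $P=\bigoplus_i P_i$) and a split torus datum, by the operations of direct sum and of passing to a central quotient, and that on the group side these operations are realized by products and by quotients by finite diagonalizable central subgroup schemes $\mu$ (possibly non-smooth in positive characteristic, e.g.\ $\mu_p$, which is harmless in the category of affine group schemes). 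Granting a simple simply connected split group $G_i$ of type $\Phi_i$, one sets $G^{\mathrm{sc}}=G_1\times\dots\times G_r$, multiplies by a suitable split torus $S$, and takes $G=(G^{\mathrm{sc}}\times S)/\mu$ for the central $\mu\subseteq T^{\mathrm{sc}}\times S$ dictated by $X$; checking that the root datum of $G$ is $R$ is then bookkeeping with characters and cocharacters. So everything comes down to constructing the $G_i$.

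For a fixed irreducible reduced root system $\Phi_i$, Serre's theorem gives a complex simple Lie algebra $\mathfrak{g}$ with root system $\Phi_i$. I would fix a Chevalley basis $\{X_\alpha\}_{\alpha\in\Phi_i}\cup\{H_1,\dots,H_\ell\}$, let $\mathfrak{g}_{\mathbb{Z}}$ be its $\mathbb{Z}$-span, let $U_{\mathbb{Z}}\subseteq U(\mathfrak{g})$ be the Kostant $\mathbb{Z}$-form generated by the divided powers $X_\alpha^{(k)}=X_\alpha^k/k!$, and fix a faithful representation $V$ of $\mathfrak{g}$ whose weights generate $P_i$ together with a $U_{\mathbb{Z}}$-stable lattice $V_{\mathbb{Z}}$. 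For any commutative ring $R$ the operator $x_\alpha(t)=\exp(tX_\alpha)$ is a well-defined automorphism of $V_{\mathbb{Z}}\otimes R$ (a finite sum, since $X_\alpha$ acts nilpotently), and I would take $G_i$ to be the smooth affine $\mathbb{Z}$-group scheme generated inside $\mathrm{GL}(V_{\mathbb{Z}})$ by the root maps $x_\alpha$ and the resulting torus. The bulk of the work — and the step I expect to be the main obstacle — is Chevalley's structure theorem for this group scheme: that it is smooth and affine over $\mathbb{Z}$ with connected fibers, that its unipotent radical is trivial, that the $x_\alpha$ are exactly the root subgroups relative to a split maximal torus $T_i$ with character lattice $P_i$, and that the associated root datum is $R_i^{\mathrm{sc}}$. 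This is established via the Chevalley commutator formula (which has $\mathbb{Z}$-coefficients, hence holds over every $R$), the Bruhat decomposition (which yields flatness/smoothness and connectedness of the fibers), and a direct computation of the roots and coroots, the Steinberg relations pinning down the isogeny type; intermediate lattices $\mathbb{Z}\Phi_i\subseteq X_i\subseteq P_i$ are obtained by replacing $V$ with a representation whose weights generate $X_i$. Finally one base-changes the $G_i$ to $\F$, forms the product and central quotient of the reduction step, and reads off that the root datum is $R$. (For types $A$--$D$ one can shortcut the Lie-theoretic construction using the classical groups $\mathrm{SL}$, $\mathrm{Sp}$, $\mathrm{SO}$, $\mathrm{Spin}$ and their central isogenies, but the exceptional types essentially force the Chevalley construction, so I would run the uniform argument above.)
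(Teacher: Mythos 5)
The paper never proves this lemma: it is imported verbatim from Milne's book (the existence theorem for split reductive groups), so there is no in-text argument to compare yours with. Your proposal is a faithful outline of the standard proof given in the cited literature: tori are handled by $\Spec\,\F[\Lambda]$, everything is constructed over $\Z$ (or the prime field) and base-changed, an arbitrary root datum is obtained from the simply connected semisimple datum plus a torus by dividing $G^{\mathrm{sc}}\times S$ by a finite central multiplicative subgroup scheme $\mu$ determined by the lattice inclusion $X\subseteq P\oplus X_0$, and the simple simply connected factors are produced by the Chevalley--Kostant $\Z$-form with divided powers acting on an admissible lattice. None of these steps is wrong, and this is exactly the route the cited source follows. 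The caveat is that, as written, it is a strategy rather than a proof: essentially all of the content sits in the two steps you name but do not carry out, namely (i) Chevalley's structure theorem for the $\Z$-group scheme generated by the $x_\alpha$'s (smoothness and affineness over $\Z$, connected reductive fibres, identification of $T_i$ as a split maximal torus with character lattice $P_i$, and computation of the resulting root datum), and (ii) the existence of the central quotient $(G^{\mathrm{sc}}\times S)/\mu$ as an affine group scheme (fppf quotient, needed since $\mu$ can be non-smooth in characteristic $p$) together with the character/cocharacter bookkeeping showing its root datum is $R$. For the role this lemma plays in the paper --- a black-box citation --- your outline identifies the correct proof and contains no erroneous step; it simply defers the hard theorems to the same sources the paper cites.
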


\begin{lemma}[{\cite[Lemma B.3.2]{BM}}]
\label{bound on the size of mor}
For any complex connected reductive group $G$ and any finite abelian group
$A$:
$$|\Hom(A, \Out(G))/\Ad(\Out(G))| < \infty$$ where here $\Ad(\Out(G))$ is the action of $\Out(G)$ on $\Hom(A, \Out(G))$ by conjugation and $\Out(G)$ is the group of $\C$-outer-automorphisms of $G$.
\end{lemma}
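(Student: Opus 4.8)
The plan is to identify $\Out(G)$ with the automorphism group of the based root datum of $G$ and then reduce the claim to the classical finiteness theorems for arithmetic groups. For a connected reductive $G$ the inner automorphisms form $\Inn(G)\iso G/Z(G)$, so the short exact sequence of Lemma \ref{Out is Aut of the based root datum} identifies $\Out(G)=\Aut(G)/\Inn(G)$ with the automorphism group $\Theta$ of the based root datum $(R,\Delta)$ of $G$. When $G$ is semisimple, $\Theta$ is finite --- it embeds into the symmetric group on the base $\Delta$ --- so $\Hom(A,\Theta)$ is already a finite set and there is nothing to prove; the content of the lemma is the case of a positive-dimensional central torus (already for a torus of rank $m$ one has $\Theta\iso\GL_m(\Z)$, infinite for $m\geq 2$). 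In general $\Theta$ acts faithfully on the character lattice $X^{*}$, so $\Out(G)\iso\Theta\embed\GL(X^{*})\iso\GL_n(\Z)$ with $n=\mathrm{rank}(X^{*})\leq\dim G$; moreover $\Theta$ is cut out inside $\GL_n(\Z)$ by the conditions of preserving (up to permutations) the finite sets of roots and coroots and the base $\Delta$, so $\Theta$ is an arithmetic group.

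It then suffices to show: for any arithmetic group $\Gamma$ and any finite group $A$, the set $\Hom(A,\Gamma)/\Ad(\Gamma)$ is finite. Every $\phi\in\Hom(A,\Gamma)$ has finite image $F=\phi(A)$ with $|F|\leq|A|$, and for a fixed finite subgroup $F\leq\Gamma$ there are only finitely many $\phi$ with $\phi(A)=F$: such a $\phi$ is determined by its kernel (a subgroup of $A$) together with an isomorphism $A/\ker\phi\xrightarrow{\sim}F$ (an $\Aut(F)$-torsor), so there are at most $(\#\{\text{subgroups of }A\})\cdot|\Aut(F)|$ of them. If $F'=\gamma F\gamma^{-1}$ is $\Gamma$-conjugate to $F$, conjugation by $\gamma$ carries $\{\phi:\phi(A)=F\}$ bijectively onto $\{\phi:\phi(A)=F'\}$. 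Writing $\Hom(A,\Gamma)$ as the disjoint union over finite subgroups $F\leq\Gamma$ of the sets $\{\phi:\phi(A)=F\}$ and grouping the $F$'s into $\Gamma$-conjugacy classes, we find that $\Hom(A,\Gamma)/\Ad(\Gamma)$ is a union --- indexed by the $\Gamma$-conjugacy classes of finite subgroups of order $\leq|A|$ --- of quotients of finite sets; since an arithmetic group has only finitely many conjugacy classes of finite subgroups, this is finite.

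The main obstacle --- and the reason the lemma is not completely trivial --- lies in that last step: we must quotient by the conjugation action of $\Out(G)$ itself rather than of the ambient $\GL_n(\Z)$, so a single $\GL_n(\Z)$-conjugacy class of finite subgroups could a priori break into infinitely many $\Out(G)$-conjugacy classes. This is exactly why we need that $\Out(G)\iso\Theta$ is \emph{itself} arithmetic, so that the finiteness of conjugacy classes of its finite subgroups is available directly. Alternatively --- presumably the route of \cite{BM} --- one exploits the explicit structure of $\Theta$ as an extension of a finite group (the diagram automorphisms of the derived group $G'$) by a group assembled from $\GL_m(\Z)$, with $m=\dim Z(G)^{0}$, and a finite piece coming from the torsion of $X^{*}/\Z\Phi$, and runs a Jordan--Zassenhaus argument on that extension.
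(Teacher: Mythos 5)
Your argument cannot be checked against an internal proof, because the paper does not prove this lemma at all: it is imported verbatim from \cite{BM} (Lemma B.3.2) and used as a black box. Judged on its own, your proof is essentially correct and is a reasonable reconstruction. The two pillars are (i) $\Out(G)\iso\Theta$, the automorphism group of the based root datum (this is exactly Lemma \ref{Out is Aut of the based root datum}), together with the observation that $\Theta$ is an arithmetic group, and (ii) the classical theorem that an arithmetic group has only finitely many conjugacy classes of finite subgroups; your counting reduction from $\Hom(A,\Gamma)/\Ad(\Gamma)$ to conjugacy classes of finite subgroups of order at most $|A|$ is correct, and you rightly identify that Jordan--Zassenhaus for the ambient $\GL_n(\Z)$ alone would not suffice, since a $\GL_n(\Z)$-class could a priori split into infinitely many $\Theta$-classes. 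Two points deserve to be made explicit rather than gestured at: first, arithmeticity of $\Theta$ should be phrased as $\Theta=H(\Q)\cap\GL(X^{*})$ where $H\leq\GL(X^{*}\otimes\Q)$ is the $\Q$-algebraic subgroup of elements permuting the finite sets $\Phi$, $\Phi^{\vee}$ and preserving $\Delta$ (a finite union of linear conditions, hence Zariski closed); second, the finiteness of conjugacy classes of finite subgroups of an arithmetic group is a genuine theorem (Borel; Platonov--Rapinchuk, Theorem 4.3) and should be cited, not treated as folklore. Note also that your argument never uses that $A$ is abelian, so you prove slightly more than the stated lemma; and your closing speculation about the route of \cite{BM} matches the technique the paper itself uses in the proof of Lemma \ref{Mor from Z^ is Mor from cyclic group}, where $\Out(G)$ is compared, up to finite index, with $\Out(G')\times\Out(R(G))\iso(\text{finite})\times\GL_m(\Z)$ -- that decomposition would give an alternative, more hands-on finish via Jordan--Zassenhaus in $\GL_m(\Z)$, at the cost of tracking the finite-index and finite-kernel comparisons carefully.
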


\begin{remark}
As before, we can replace $\C$ by any algebraically closed field since by Lemma \ref{Out is Aut of the based root datum}, $\Out(G)$ is the automorphism group of the based root datum of $G$ and by Lemma \ref{there exists a split reductive group}, for any root datum and a field $\F$ there exist a split connected reductive group defined over $\F$ with that root datum.
\end{remark}

\begin{lemma}[{\cite[Theorem 11.11]{lieType}}]
\label{Out(G) is finite for semisimple groups}
Let $G$ be a semi-simple algebraic group. Then $\Out(G)$ is finite.
\end{lemma}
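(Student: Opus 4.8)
The plan is to identify $\Out(G)$ with the automorphism group $\Theta$ of the based root datum of $G$, and then to observe that $\Theta$ is visibly finite because $G$ is semi-simple. Throughout, ``semi-simple'' is taken to mean connected semi-simple, so that Lemma \ref{Out is Aut of the based root datum} applies. First I would recall that the conjugation homomorphism $G\to\Aut(G)$, $g\mapsto(x\mapsto gxg^{-1})$, has kernel $Z(G)$ and image $\Inn(G)$, so that $\Inn(G)\iso G/Z(G)$ as abstract groups; this is precisely the map occurring in the short exact sequence $1\to G/Z(G)\to\Aut(G)\to\Theta\to 1$ of Lemma \ref{Out is Aut of the based root datum}, where $\Theta=\{\sigma\in\Aut(R)\mid\sigma(\Delta)=\Delta\}$ for the based root datum $(R,\Delta)$ of $G$. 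Exactness then says that the kernel of $\Aut(G)\to\Theta$ is exactly $\Inn(G)$ and that the map is onto, so quotienting yields a canonical isomorphism $\Out(G)=\Aut(G)/\Inn(G)\iso\Theta$.

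It remains to show $\Theta$ is finite, and this is the only place the semi-simplicity hypothesis is genuinely needed. Write the root datum as $R=(X,\Phi,X^\vee,\Phi^\vee)$ with $\Phi$ the set of roots and $\Delta\subseteq\Phi$ the chosen base. An element $\sigma\in\Theta$ is in particular an automorphism of the lattice $X$ satisfying $\sigma(\Delta)=\Delta$. Since $G$ is semi-simple, $\Phi$ spans $X\otimes_{\Z}\Q$, and $\Delta$ spans the same $\Q$-subspace as $\Phi$; hence $\Delta$ spans $X\otimes_{\Z}\Q$. Consequently $\sigma$ is completely determined by the permutation it induces on the finite set $\Delta$, which gives an injective homomorphism $\Theta\embed\mathrm{Sym}(\Delta)$. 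As $\Delta$ is finite (its cardinality equals the semi-simple rank of $G$), $\mathrm{Sym}(\Delta)$ is finite, and therefore so are $\Theta$ and $\Out(G)$.

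Since everything reduces to bookkeeping once Lemma \ref{Out is Aut of the based root datum} is granted, I do not expect a genuine obstacle; the one point that demands care is the use of semi-simplicity, which cannot be dropped. For $G=\G_m^{\,n}$ with $n\ge 2$ one has $Z(G)=G$, hence $\Out(G)=\Aut(G)=\GL_n(\Z)$, which is infinite — and correspondingly the torus has no roots, so $\Delta=\emptyset$ fails to span $X\otimes\Q$ and the argument above breaks down exactly there. For a general connected reductive $G$ one obtains finiteness of $\Out(G)$ only after modding out the (possibly infinite) automorphism group of the central torus, which is why the statement is restricted to semi-simple groups.
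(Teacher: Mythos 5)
Your argument is correct, but note that the paper offers no proof of this statement at all: it is quoted verbatim as Theorem 11.11 of the cited reference on groups of Lie type, so your write-up is a genuine proof where the paper has only a citation. What you do is reduce the claim to the other imported ingredient, Lemma \ref{Out is Aut of the based root datum}: the short exact sequence $1\to G/Z(G)\to\Aut(G)\to\Theta\to 1$ identifies $\Out(G)$ with the automorphism group $\Theta$ of the based root datum, and then semi-simplicity enters exactly where you say it does, namely the roots (hence the base $\Delta$) span $X\otimes_{\Z}\Q$, so restriction to $\Delta$ gives an injection $\Theta\embed\mathrm{Sym}(\Delta)$ into a finite symmetric group. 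This is in fact the standard proof behind the cited theorem, so the two routes buy different things: the citation keeps the paper short and sidesteps hypotheses, while your argument makes the dependence transparent (everything rests on the based-root-datum sequence, which the paper already assumes) and even quantifies the bound $|\Out(G)|\le|\mathrm{Sym}(\Delta)|$, with the torus example correctly explaining why the reductive case needs the extra quotient by $\Out$ of the central torus, which is precisely how the paper handles it in Lemma \ref{Mor from Z^ is Mor from cyclic group}. The only caution is one you partly flag yourself: the sequence of Lemma \ref{Out is Aut of the based root datum} is stated for connected groups over an algebraically closed field, with $\Aut$ meaning algebraic automorphisms, so your proof establishes the lemma in that setting; this matches how the lemma is actually used in the paper (for the derived group of a connected reductive group over $\Fc$ or $\C$), but it is worth stating the hypotheses explicitly rather than leaving them implicit in the bare phrase ``semi-simple algebraic group''.
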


\begin{lemma}
\label{Mor from Z^ is Mor from cyclic group}
Let $\hat{\Z}$ be the pro-finite completion of $\Z$. Then for any complex connected reductive group $G$, there is an integer $n_G > 0$
such that any continuous homomorphism $\hat{\Z} \rightarrow \Out(G)$, factors through $C_{n_G} \rightarrow \Out(G)$, where $C_{n_G}$ is the cyclic group of order $n_G$ (here $\Out(G)$ comes with the discrete topology whereas $\hat{\Z}$ comes with its usual topology).
\end{lemma}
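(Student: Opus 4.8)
The plan is to reduce the statement about $\hat{\Z}$ to a purely finite-group fact once we know $\Out(G)$ is a finite group. First I would observe that for a connected reductive group $G$, Lemma \ref{Out is Aut of the based root datum} identifies $\Out(G)$ with $\Theta$, the automorphism group of the based root datum of $G$, which is a subgroup of the (finite) automorphism group of a root system; hence $\Out(G)$ is finite. (Alternatively, one can split off the central torus and invoke Lemma \ref{Out(G) is finite for semisimple groups} for the semisimple part, but the based-root-datum description already gives finiteness directly.) Let $n_G := |\Out(G)|$, or more economically the exponent of $\Out(G)$.

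Next I would use the standard fact that a continuous homomorphism $f : \hat{\Z} \to F$ into a finite group $F$ (with the discrete topology) has open kernel, since $F$ has only the discrete topology and $\{1\}$ is open; by continuity $\ker f$ is an open subgroup of $\hat{\Z}$. Every open subgroup of $\hat{\Z}$ contains $n\hat{\Z}$ for some positive integer $n$ — indeed the open subgroups of $\hat{\Z}$ are exactly the subgroups $n\hat{\Z} = \overline{n\Z}$ for $n \geq 1$, because $\hat{\Z}/n\hat{\Z} \cong \Z/n\Z$ and these are cofinal among the finite discrete quotients. Taking $n = n_G$ works: since $\Im f \leq \Out(G)$ and $\Out(G)$ has order (or exponent) dividing $n_G$, the composite $\hat{\Z} \xrightarrow{f} \Out(G) \xrightarrow{} \Out(G)$ kills $n_G\hat{\Z}$, so $f$ factors through $\hat{\Z}/n_G\hat{\Z} \cong C_{n_G}$, which is precisely the claimed factorization $C_{n_G} \to \Out(G)$.

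There is essentially no obstacle here: the only input with any content is the finiteness of $\Out(G)$, and everything else is the elementary structure theory of $\hat{\Z}$ (its finite quotients are exactly the cyclic groups $\Z/n\Z$, and continuous maps to discrete finite groups factor through one of them). I would phrase the proof in two sentences: finiteness of $\Out(G)$ via Lemma \ref{Out is Aut of the based root datum}, then continuity forces the factorization through a finite cyclic quotient of $\hat{\Z}$. The constant $n_G$ can be taken to be $|\Out(G)|$, which — combined with Lemma \ref{there is a finite number of complex connected reductive groups with bounded dimension} — will later be uniformly bounded in terms of $\dim G$, though that uniformity is not needed for the present statement.
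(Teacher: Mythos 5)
There is a genuine gap: your key claim that $\Out(G)$ is finite for a connected reductive group $G$ is false. Lemma \ref{Out is Aut of the based root datum} identifies $\Out(G)$ with the automorphism group of the based root \emph{datum}, not of the root system; an automorphism of the based root datum is an automorphism of the character lattice preserving the (possibly empty) sets of simple roots and coroots, and when $G$ has a central torus of dimension at least $2$ this group is infinite. The simplest counterexample is $G=\G_m^2$, which is connected reductive with $\Inn(G)=1$ and $\Out(G)=\Aut(G)\cong\GL_2(\Z)$. Consequently you cannot take $n_G=|\Out(G)|$ or the exponent of $\Out(G)$, and the finiteness of $\Out(G)$ for \emph{semisimple} groups (Lemma \ref{Out(G) is finite for semisimple groups}) does not cover the reductive case; your parenthetical alternative "split off the central torus" silently drops exactly the part that causes the problem.

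Your second step is fine and is indeed how one should use the topology: a continuous homomorphism $f:\hat{\Z}\to\Out(G)$ (discrete target) has open kernel, hence factors through some finite cyclic quotient $\hat{\Z}/n\hat{\Z}$, so its image is cyclic generated by a torsion element of $\Out(G)$. But the whole content of the lemma is that the integer $n$ can be chosen \emph{uniformly}, i.e. that the orders of torsion elements of the (possibly infinite) group $\Out(G)$ are bounded. This is what the paper proves: using $G=R(G)\cdot G'$ and the fact that $R(G)$ and $G'$ are characteristic, one embeds $\Aut(G)$ into $\Aut(G')\times\Aut(R(G))$ and reduces, up to finite index, to $\Out(G')\times\Out(R(G))$; the factor $\Out(G')$ is finite by Lemma \ref{Out(G) is finite for semisimple groups}, while $\Out(R(G))\cong\GL_n(\Z)$, where one invokes Theorem \ref{finite order in GLn(Z)} (Minkowski-type bound on the maximal finite order of elements of $\GL_n(\Z)$). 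Without some substitute for that last ingredient your argument does not establish the lemma.
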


\begin{proof}
Let $R(G)$ be the radical of $G$. by lemma \ref{radical}, we get that: $G = R(G)\cdot G'$. Now, since $R(G),G'$ are characteristic subgroups (by definition), any automorphism $\phi$ of $G$ is determent on $\phi|_{R(G)},\phi|_{G'}$. This gives us an embedding \[F:\Aut(G)\embed \Aut(R(G))\times \Aut(G').\] We claim now that $\Inn(G)$ is of finite index in $F^{-1}(\Inn(G')\times \Inn(R(G))) = F^{-1}(\Inn(G')\times \{Id\})$. Indeed, if $\phi$ is an automorphism of $G$ that fixes $R(G)\cap G'$, then, since $R(G)$ is abelian, $\phi$ is inner iff $F(\phi)$ is inner. Since $R(G)\cap G'$ is finite, we get the claim. This shows that we may replace \[\Out(G) \iso \Aut(G)/\Inn(G)\] in the lemma by \[\Aut(G)/F^{-1}(\Inn(G')\times \Inn(R(G))) \iso F(\Aut(G))/(\Inn(G')\times \Inn(R(G)))\] and we may replace that by \[(\Aut(G')\times \Aut(R(G)))/(\Inn(G')\times \Inn((R(G))) \iso \Out(G')\times \Out(R(G)).\] By Lemma \ref{Out(G) is finite for semisimple groups}, we may replace it by $\Out(R(G))$. But since $R(G)$ is a central torus, we get that: $\Out(R(G))\iso \GL_n(\Z)$ for some positive integer $n$. Now the lemma follows from Lemma \ref{finite order in GLn(Z)}.
\end{proof}

\begin{lemma}[{\cite[Example 7.2.3]{root_datum}}]
\label{H^1 is Mor}
Let $\F$ be a finite field. Denote: $\Gamma := \Gal(\Fc/\F)$. Let $G$ be a split connected reductive group defined over $\F$. Let $\Theta$ be the automorphism group of the based root datum of $G$. Then $$H^1(\F,Aut(G))=\Mor(\Gamma,\Theta)/\Ad(\Theta)$$ where here $\Mor$ means continuous morphisms where $\Gamma$ is with the usual topology and $\Theta$ is with the discrete topology. Also, $\Ad(\Theta)$ is the action of $\Theta$ on $\Mor(\Gamma, \Theta)$ by conjugation.
\end{lemma}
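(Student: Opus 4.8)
The plan is to identify $H^1(\F,\Aut(G))$ first with $H^1(\Gamma,\Theta)$ and then to unwind the latter, using crucially that $\Gamma$ acts trivially on the finite constant group $\Theta$. The starting point is the short exact sequence of Lemma \ref{Out is Aut of the based root datum}, read scheme-theoretically as
\[
1 \longrightarrow G/Z(G) \longrightarrow \Aut(G) \longrightarrow \Theta \longrightarrow 1,
\]
in which the inner automorphism group $G/Z(G)$ (a connected adjoint semisimple group) is connected and $\Theta$ is finite. The extra input supplied by the hypothesis that $G$ is \emph{split} over $\F$ is that this sequence admits an $\F$-rational splitting: choosing an $\F$-rational pinning of $G$ (which exists precisely because $G$ is $\F$-split) produces a section $\Theta \to \Aut(G)$ whose image consists of automorphisms defined over $\F$. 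Hence $\Gamma$ acts trivially on $\Theta$ and fixes the image of the section pointwise; this is the only place where "split" is used, and without it $\Gamma$ would in general act nontrivially on $\Theta$.

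Next I would pass to the associated long exact sequence of pointed sets in continuous Galois cohomology,
\[
H^1(\Gamma, G/Z(G)) \longrightarrow H^1(\Gamma,\Aut(G)) \xrightarrow{\;\pi_*\;} H^1(\Gamma,\Theta),
\]
and show $\pi_*$ is a bijection. Surjectivity is immediate from the $\F$-rational section. For injectivity I would invoke the standard non-abelian twisting formalism (Serre, \emph{Galois Cohomology}, I.5.5): for a class $[c]\in H^1(\Gamma,\Theta)$, lifting it to $\Aut(G)$ through the section and twisting, the fibre of $\pi_*$ over $[c]$ is the image of $H^1\!\big(\Gamma,\,{}_{\tilde c}(G/Z(G))\big)$, where ${}_{\tilde c}(G/Z(G))$ is the corresponding inner twist of $G/Z(G)$. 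Any such twist is again a connected algebraic group over the finite field $\F$, so by Lang's theorem $H^1\!\big(\Gamma,\,{}_{\tilde c}(G/Z(G))\big)=1$. Therefore every fibre of $\pi_*$ is a single point, $\pi_*$ is bijective, and $H^1(\F,\Aut(G)) \cong H^1(\Gamma,\Theta)$.

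Finally I would compute $H^1(\Gamma,\Theta)$ directly. Since $\Gamma$ acts trivially on $\Theta$, the cocycle condition degenerates to the homomorphism condition, so $Z^1(\Gamma,\Theta)$ is exactly the set $\Mor(\Gamma,\Theta)$ of continuous homomorphisms (a continuous map from the profinite $\Gamma$ to the finite discrete $\Theta$ automatically factors through a finite quotient), and two such cocycles are cohomologous precisely when they differ by conjugation by an element of $\Theta$, i.e. lie in the same $\Ad(\Theta)$-orbit. Hence $H^1(\Gamma,\Theta)=\Mor(\Gamma,\Theta)/\Ad(\Theta)$, which completes the identification. I expect the main obstacle to be the middle step: making the scheme-theoretic sequence and the existence of an $\F$-rational pinning-splitting precise, and then carrying out the twisting/fibre computation correctly so that Lang's theorem can be applied to each twisted connected group ${}_{\tilde c}(G/Z(G))$ to collapse all fibres of $\pi_*$; the first and last paragraphs are essentially formal bookkeeping with non-abelian $H^1$.
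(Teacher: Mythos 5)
Your proposal is correct, but note that the paper does not prove this statement at all: it is imported verbatim as a citation (Example 7.2.3 of the reference on root data), so there is no internal proof to compare against. Your argument is essentially the standard derivation lying behind that citation, and it goes through: the pinned splitting of $1\to G/Z(G)\to \Aut(G)\to\Theta\to 1$ over $\F$ (available exactly because $G$ is $\F$-split) gives a $\Gamma$-equivariant group-theoretic section, which is what lets you both push cocycles of $\Theta$ forward (surjectivity of $\pi_*$) and see that $\Gamma$ acts trivially on $\Theta$; then Serre's twisting formalism describes the fibre of $\pi_*$ over a class $[c]$ as a quotient of (the image of) $H^1\bigl(\Gamma,\,{}_{\tilde c}(G/Z(G))\bigr)$, and since the twist of $G/Z(G)$ by a cocycle valued in $\Aut(G)$ is again the $\Fc$-points of a connected group over the finite field $\F$ (a form of the adjoint group $G'/Z(G')$), Lang's theorem kills this $H^1$ and each fibre is a point; finally, triviality of the $\Gamma$-action turns cocycles into continuous homomorphisms and coboundaries into $\Ad(\Theta)$-conjugacy, giving $H^1(\Gamma,\Theta)=\Mor(\Gamma,\Theta)/\Ad(\Theta)$. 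The only points worth spelling out if you were to write this in full are the ones you already flag: that the section is defined over $\F$ (i.e.\ $\Gamma$-equivariant), so every class of $H^1(\Gamma,\Theta)$ genuinely lifts, and that the precise fibre statement from Serre only needs the vanishing of the twisted $H^1$, which Lang provides because the twisted kernel remains connected.
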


We can now bound the number of connected reductive groups with a bounded dimension over $\F$:
\begin{lemma}
\label{bound on the number of reductive groups}
There exist an increasing function: $C_{rn2}:\N\rightarrow\N$ s.t. for any positive integer $n$ and finite field $\F$, the number of connected reductive groups that are defined over $\F$, with dimension lower then $n$ (up to an isomorphism), is less then $C_{rn2}(n)$. 
\end{lemma}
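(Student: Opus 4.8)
The plan is to combine the classification of connected reductive groups over a field $\K$ by root data (as recalled after Lemma \ref{there is a finite number of complex connected reductive groups with bounded dimension}) with Galois descent, tracking only cardinalities. First I would fix $n$ and note that, by Lemma \ref{there is a finite number of complex connected reductive groups with bounded dimension} together with the remark following it, there are at most $C_{rn}(n)$ isomorphism classes of connected reductive groups of dimension $<n$ over $\Fc$; equivalently, at most $C_{rn}(n)$ root data $R$ arise. For each such $R$, Lemma \ref{there exists a split reductive group} gives an $\F$-split connected reductive group $G_0 = G_0(R)$ over $\F$ with root datum $R$. Every connected reductive group $G$ over $\F$ with $G_{\Fc}\iso (G_0)_{\Fc}$ is an $\F$-form of $G_0$, hence is classified by an element of the pointed set $H^1(\F,\Aut(G_0))$ (here using that $\F$-forms of $G_0$ are parametrized by this Galois cohomology set, $\Aut$ being the automorphism group scheme). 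So it suffices to bound $|H^1(\F,\Aut(G_0))|$ uniformly in $\F$ and in $R$ (the latter ranging over a finite set).

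The next step is to bound $|H^1(\F,\Aut(G_0))|$ using the structure of $\Aut(G_0)$. By Lemma \ref{Out is Aut of the based root datum} there is a short exact sequence $1 \to G_0/Z(G_0) \to \Aut(G_0) \to \Theta \to 1$ where $\Theta = \Out(G_0)$ is the finite automorphism group of the based root datum. The associated cohomology exact sequence, together with the fact (Lang's theorem, or Lemma \ref{H^1 is Mor}) that $H^1$ of a connected group over a finite field is trivial, reduces the problem to bounding the image of $H^1(\F,\Aut(G_0))$ in $H^1(\F,\Theta)$, i.e. to bounding $|H^1(\F,\Theta)|$ where $\Theta$ is a finite (abstract, since $\F$ finite and everything split on the relevant cover) group acted on by $\Gamma = \Gal(\Fc/\F) \iso \hat{\Z}$. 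By Lemma \ref{H^1 is Mor}, $H^1(\F,\Aut(G_0)) = \Mor(\Gamma,\Theta)/\Ad(\Theta)$ (the same formula applies with $\Aut$ in place of the based-root-datum automorphisms once we pass through the exact sequence), so I must bound the number of $\Theta$-conjugacy classes of continuous homomorphisms $\hat{\Z}\to\Theta$. Since $\Theta$ is finite, such a homomorphism is determined by the image of $1\in\Z$, which is an element of finite order in $\Theta$; thus $|\Mor(\hat\Z,\Theta)| \le |\Theta|$, and a fortiori $|H^1(\F,\Aut(G_0))| \le |\Theta| < \infty$, uniformly in $\F$.

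Finally I would assemble the count: the number of isomorphism classes of connected reductive groups over $\F$ of dimension $<n$ is at most
\[
\sum_{R}\, |H^1(\F,\Aut(G_0(R)))| \;\le\; \sum_{R} |\Out(G_0(R))|,
\]
where $R$ runs over the (at most $C_{rn}(n)$) relevant root data. Each $|\Out(G_0(R))|$ is a finite number depending only on $R$, and there are finitely many $R$, so the whole sum is bounded by a quantity depending only on $n$; I would simply define $C_{rn2}(n)$ to be this bound (and make it increasing in $n$ by taking a running maximum).

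The main obstacle I anticipate is the bookkeeping in the Galois-cohomology reduction: one must be careful that $\F$-forms of $G_0$ really are classified by $H^1(\F,\Aut(G_0))$ as group schemes (not merely inner forms, which would use $H^1$ with coefficients in the adjoint group $G_0/Z(G_0)$), and that the triviality of $H^1$ for connected groups over finite fields (Lang's theorem) correctly kills the connected part $G_0/Z(G_0)$ in the exact sequence so that the outer part $\Theta$ is all that survives. Everything else — the finiteness of $\Theta$, the bound $|\Mor(\hat\Z,\Theta)|\le|\Theta|$, and the finiteness of the set of root data — is already packaged in the lemmas quoted above, so once the descent step is set up correctly the bound is immediate.
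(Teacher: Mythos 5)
Your overall strategy is the same as the paper's (classify the $\Fc$-isomorphism classes by root data via Lemma \ref{there is a finite number of complex connected reductive groups with bounded dimension}, choose a split $\F$-form via Lemma \ref{there exists a split reductive group}, classify $\F$-forms by $H^1(\F,\Aut(G_0))$, and reduce via Lemmas \ref{Out is Aut of the based root datum} and \ref{H^1 is Mor} to counting $\Mor(\hat{\Z},\Theta)/\Ad(\Theta)$), but there is a genuine gap at the final counting step: you assert that $\Theta=\Out(G_0)$, the automorphism group of the based root datum, is \emph{finite}. That is true for semi-simple groups (Lemma \ref{Out(G) is finite for semisimple groups}) but false for general connected reductive groups, which here have a possibly positive-dimensional central torus. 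For example, for $G_0=\G_m^k$ the root system is empty and $\Theta=\Aut(\Z^k)=\GL_k(\Z)$, which is infinite; more generally $\Out(G_0)$ contains a $\GL_k(\Z)$-type contribution from the radical. Consequently your bounds $|\Mor(\hat{\Z},\Theta)|\leq|\Theta|$ and $\sum_R|\Out(G_0(R))|$ are vacuous (the right-hand sides can be infinite), and the conclusion does not follow as written.

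This is exactly the point the paper spends two extra lemmas on. Lemma \ref{Mor from Z^ is Mor from cyclic group} shows that every continuous homomorphism $\hat{\Z}\to\Out(G_0)$ factors through a finite cyclic group $C_{n_{G_0}}$; its proof decomposes $\Out(G_0)$ (up to finite index issues) into $\Out(G_0')\times\Out(R(G_0))$, uses finiteness of $\Out$ for the semi-simple part and identifies $\Out(R(G_0))\iso\GL_m(\Z)$, where the needed uniform bound on orders of finite-order elements is Theorem \ref{finite order in GLn(Z)}. Then Lemma \ref{bound on the size of mor} gives finiteness of $\Hom(C_{n_{G_0}},\Out(G_0))/\Ad(\Out(G_0))$ --- note that what is finite is the set of $\Ad(\Theta)$-orbits of homomorphisms, not the set of homomorphisms themselves, so the quotient by $\Ad(\Theta)$ in Lemma \ref{H^1 is Mor} is essential rather than a convenience. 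With these two lemmas replacing your ``$\Theta$ is finite'' step, the bound depends only on the (finitely many) root data and hence only on $n$, which is the paper's argument; without them your proof only covers the semi-simple case.
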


\begin{proof}
Let $\F$ be a finite field and let $G_0$ be a connected reductive group defined over $\F$. We will denote by $X_{G_0}$, the set of equivalence classes of connected reductive groups, that are defined over $\F$ and are isomorphic to $G_0$ when considered as an $\Fc$-groups. By Lemma \ref{there is a finite number of complex connected reductive groups with bounded dimension}, it is enough to bound the cardinality of $X_{G_0}$ (independently of $\F$). Since by Lemma \ref{there exists a split reductive group}, there exists a split group in $X_{G_0}$, we may assume $G_0$ is split. It is well-known that there exist a bijection from $X_{G_0}$ to $H^1(\Gamma,Aut(G_0))$ where $\Gamma = \Gal(\Fc/\F)$(see Proposition 11.3.3 in \cite{Springer}). By Lemma \ref{H^1 is Mor}, and Lemma \ref{Out is Aut of the based root datum} it is enough to bound the size of $\Mor(\Gamma,\Out(G))/\Ad(\Out(G))$. The lemma now follows from Lemmas \ref{bound on the size of mor} and \ref{Mor from Z^ is Mor from cyclic group}(since it is well-known that $\Gamma \iso \hat{\Z}$). 
\end{proof}

We now want to prove Theorem \ref{second big thm} for connected reductive groups. We first present a similar theorem for abstract quasi-simple groups of Lie type, that is known (recall that a quasi-simple group of Lie type $L$ is an abstract perfect group $G$ with $G/Z(G)$ being simple and isomorphic to $H(\F)/Z(H(\F))$) for some absolutely simple and simply connected algebraic group $H$ of Lie type $L$. Also recall that an algebraic group over an algebraically closed field $\K$ is called absolutely simple, if it does not have any closed, normal, connected, and non-trivial subgroup):
\begin{thm}[{\cite[Theorem 1.5]{Reductive}}]
\label{the thm for quasi simple groups}
There exist a function $C_{qs}: \N \rightarrow \N$ with the following property: For any quasi-simple group of Lie type $G$ s.t. the rank of the Lie type of $G$ is $r$ (for some positive integer $r$) we have:  $N(G) \leq C_{qs}(r)$.
\end{thm}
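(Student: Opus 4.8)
Since this statement is quoted from \cite{Reductive}, I only sketch the route I would take. The idea is to reduce to bounding the number of distinct irreducible character degrees of a finite simply connected group of Lie type over $\Fq$, and then to use that, by Lusztig's classification, these degrees are values at $q$ of a list of polynomials whose length is controlled by the rank.

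First I would reduce to $G=H(\Fq)$ with $H$ absolutely simple and simply connected. If $G$ is quasi-simple of Lie type $L$ and rank $r$, then $S:=G/Z(G)$ is a finite simple group of type $L$ and $G$ is perfect, so $G$ is a quotient of the universal central (Schur) cover $\hat S$ of $S$. For all but a fixed finite list of pairs $(L,q)$ one has $\hat S = H_{sc}(\Fq)$, the $\Fq$-points of the simply connected group of type $L$; for the finitely many exceptional pairs, $\hat S$ is one of finitely many fixed finite groups and $N(G)$ is bounded by an absolute constant. In the non-exceptional case, any irreducible representation of $G$ inflates to one of $H_{sc}(\Fq)$ of the same dimension, so $\dimirr(G)\subseteq\dimirr(H_{sc}(\Fq))$ and $N(G)\le N(H_{sc}(\Fq))$. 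Thus it suffices to bound $N(H(\Fq))$ in terms of $r$ for $H$ simple simply connected of rank $r$.

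Next I would invoke Lusztig's Jordan decomposition of $\irr(H(\Fq))$. Writing $H^*$ for the dual group (of adjoint type) with induced Frobenius $F$, $\irr(H(\Fq))$ decomposes into Lusztig series indexed by $H^{*F}$-classes of semisimple $s\in H^{*F}$, and for $\chi$ in the series of $s$ one has $\dim\chi=[H^{*F}:C_{H^*}(s)^F]_{p'}\cdot d$, where $p=\charr\Fq$ and $d$ is, up to the correction coming from the component group of $C_{H^*}(s)$, the dimension of a unipotent character of $(C_{H^*}(s)^{\circ})^F$. Each $C_{H^*}(s)$ is a pseudo-Levi subgroup, i.e.\ a connected reductive group whose root datum embeds in that of $H^*$, equipped with an $F$-action. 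The counting then rests on two $r$-bounded finiteness statements: (i) up to type (with $F$-action), there are only finitely many pseudo-Levi subgroups --- by Borel--de Siebenthal they correspond to subsets of the affine Dynkin diagram of $H^*$ plus a bounded amount of twisting data, and their number is bounded in terms of $r$; and for each type both the polynomial $q\mapsto|C_{H^*}(s)^F|_{p'}$ and the (at most $|W|$ many) unipotent character degrees $d$ are generic polynomials in $q$ determined by the type; hence (ii) $[H^{*F}:C_{H^*}(s)^F]_{p'}=|H^{*F}|_{p'}/|C_{H^*}(s)^F|_{p'}$ takes at most $f_1(r)$ polynomial forms and $d$ takes at most $f_2(r)$ polynomial forms, so every $\dim\chi$ is the value at $q$ of one of at most $f_1(r)f_2(r)$ polynomials. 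Two distinct polynomials from this finite ($r$-bounded) list can take the same value only for $q$ below a bound depending on their degrees, hence on $r$; and for those finitely many small $q$, $|H(\Fq)|$ is itself bounded in terms of $r$, so $N(H(\Fq))$ is trivially bounded there. Taking the maximum over the two regimes gives the desired $C_{qs}(r)$.

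The main obstacle is that the heart of the argument is Lusztig's classification of the irreducible characters of finite reductive groups together with the genericity of unipotent character degrees; this is used as a black box. Beyond that, the care needed is in handling the possible disconnectedness of centralizers of semisimple elements in the adjoint group $H^*$ and the associated component-group correction in the degree formula, in verifying that the number of pseudo-Levi types and of unipotent characters is bounded purely in terms of $r$, and in dealing cleanly with the finitely many exceptional-Schur-multiplier and small-$q$ cases.
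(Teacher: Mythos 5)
The paper offers no proof of this statement at all: it is imported verbatim as \cite[Theorem 1.5]{Reductive} and used as a black box, so there is no internal argument to compare yours against. Your sketch follows the standard Deligne--Lusztig route (reduce to the simply connected quasi-split form via the Schur cover, then use Jordan decomposition of characters, the Borel--de Siebenthal control of centralizers of semisimple elements in the dual group, and genericity of unipotent character degrees), which is essentially how rank-bounds on the set of character degrees are obtained in the literature, and it is sound as a sketch given the black boxes you explicitly name (Lusztig's classification, disconnected centralizers, exceptional Schur multipliers). One simplification: for the upper bound you only need that every degree is the value at $q$ of one of at most $f_1(r)f_2(r)$ polynomials, which already gives $N(G)\le f_1(r)f_2(r)$ for every fixed $q$; the discussion of when two distinct polynomials can take the same value, and the separate small-$q$ regime it necessitates, is superfluous.
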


We can then use the next theorem to prove Theorem \ref{second big thm} for absolutely simple and simply connected algebraic groups:

\begin{thm}[Tits, {\cite[Theorem 24.17]{lieType}}]
Let $G$ be an absolutely simple and simply connected algebraic group, defined over a finite field $\F$. Assume that $\charr\F>3$. Then $G(\F)$ is quasi-simple (and therefore quasi-simple of lie type).
\end{thm}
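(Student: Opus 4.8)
The plan is to derive this from the classical perfectness and simplicity theorems for finite groups of Lie type, with the hypothesis $\charr\F>3$ doing exactly one job: eliminating the small exceptional groups where these theorems fail. Write $p=\charr\F$ and $q=|\F|$, so $q\geq 5$. Since $G$ is absolutely simple it is connected, so by the Lang--Steinberg theorem $G$ has an $\F$-rational Borel subgroup $B$ containing an $\F$-rational maximal torus $T$; let $U$ be the unipotent radical of $B$. Then $G(\F)$ carries a split $BN$-pair $(B(\F),N(\F))$ with $B(\F)=U(\F)\rtimes T(\F)$, of rank $r$ equal to the $\F$-rank of $G$, and the associated relative root system is \emph{irreducible}, because the absolute Dynkin diagram of $G$ is connected and Frobenius acts on it by a diagram automorphism (folding a connected diagram yields an irreducible one). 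Note $p>3$ already rules out the Suzuki groups ${}^2B_2$ ($p=2$), the Ree groups ${}^2G_2$ ($p=3$) and ${}^2F_4$ ($p=2$), so the only twisted types occurring are ${}^2A_n$, ${}^2D_n$, ${}^3D_4$, ${}^2E_6$; in all of these we have honest root subgroups and Chevalley-style commutator relations.

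\emph{Perfectness.} I would first show $G(\F)$ is generated by its (relative) root subgroups $U_\alpha(\F)$ --- standard for simply connected groups, where $G(\F)$ coincides with the subgroup $G^{+}$ generated by the unipotent radicals of the parabolics containing $B$. It then suffices to show each $U_\alpha(\F)$ lies in $[G(\F),G(\F)]$, and for this one uses the commutator identity in the rank-one subgroup containing $U_\alpha$: in $\mathrm{SL}_2$, $[\,\mathrm{diag}(\lambda,\lambda^{-1}),u(c)\,]=u((\lambda^{2}-1)c)$, and in general $[h,u_\alpha(c)]=u_\alpha((\alpha(h)-1)c)$ for $h\in T(\F)$. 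Since $q\geq 5$, the group $\F^{\times}$ contains $\lambda$ with $\lambda^{2}\neq 1$ (resp.\ $h$ with $\alpha(h)\neq 1$), so every element of $U_\alpha(\F)$ is a commutator; hence $G(\F)=G(\F)'$. This is precisely the step that breaks for $q\in\{2,3\}$, i.e.\ for $\mathrm{SL}_2(2)$, $\mathrm{SL}_2(3)$, $\mathrm{Sp}_4(2)$, and so on.

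\emph{Simplicity of the quotient.} With $G(\F)$ perfect, an irreducible split $BN$-pair of rank $r\geq 1$, and $B(\F)$ solvable, I would invoke Tits' simplicity theorem for groups with a $BN$-pair (equivalently, Iwasawa's lemma applied to the strongly transitive action of $G(\F)$ on the coset space $G(\F)/B(\F)$): it yields that $G(\F)/Z_0$ is simple, where $Z_0=\bigcap_{g\in G(\F)}gB(\F)g^{-1}$ is the kernel of that action. It remains to identify $Z_0$: for simply connected $G$ the kernel of the $G$-action on the flag variety is the scheme-theoretic center, so $Z_0=Z(G)(\F)$, and since $G(\F)/Z(G)(\F)$ is nonabelian simple it equals the abstract center $Z(G(\F))$. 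Thus $G(\F)/Z(G(\F))$ is simple and $G(\F)$ is perfect, i.e.\ quasi-simple; taking $H:=G$ in the definition of ``Lie type'' ($H$ absolutely simple and simply connected with $G(\F)/Z(G(\F))\iso H(\F)/Z(H(\F))$) gives that $G(\F)$ is quasi-simple of Lie type.

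The main obstacle is not a single clever step but the bookkeeping behind ``$\charr\F>3$ suffices'': one must know that the only non-perfect or non-simple finite groups of Lie type with simply connected $G$ are the sporadic low-rank / low-$q$ cases ($\mathrm{PSp}_4(2)\iso S_6$, $G_2(2)$, ${}^2A_2(2)$, ${}^2G_2(3)$, and the Suzuki/Ree/Tits small groups), all of which have $\charr\in\{2,3\}$, together with the computation that for simply connected $G$ the flag-variety action has kernel exactly $Z(G)$. I would handle all of this uniformly through the split $BN$-pair formalism rather than by a type-by-type classification.
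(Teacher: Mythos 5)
The paper does not prove this statement at all: it is quoted verbatim as a known theorem of Tits, with a citation to \cite[Theorem 24.17]{lieType}, and is used as a black box. Your sketch is essentially the classical textbook proof of that cited result (split $BN$-pair from quasi-splitness over a finite field, perfectness via the torus action on root subgroups using $q\geq 5$, then Tits' simplicity theorem/Iwasawa), so there is nothing to reconcile with the paper's argument --- you are reproving the reference. Two places where you lean on nontrivial standard facts without proof: that $G(\F)=G^{+}$ for simply connected $G$ (Steinberg's theorem), which is fine to quote but is not a triviality; and the identification of the kernel $Z_0=\bigcap_{g\in G(\F)}gB(\F)g^{-1}$ with $Z(G)(\F)$. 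Your justification of the latter via the flag variety is loose as stated: $x$ fixing all \emph{rational} Borels does not immediately give that it acts trivially on $G/B$, since $(G/B)(\F)$ is a finite set and not Zariski dense. The standard repair stays inside the $BN$-pair: $Z_0$ is normal in $G(\F)$ and contained in $B(\F)\cap B^{-}(\F)=T(\F)$, hence commutators of $Z_0$ with root subgroups land in $T(\F)\cap U_\alpha(\F)=1$, so $Z_0$ centralizes the root subgroups, which generate $G(\F)$; this gives $Z_0\subseteq Z(G(\F))$, and the reverse inclusion plus $Z(G(\F))=Z(G)(\F)$ follows as you say once the quotient is known to be nonabelian simple. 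With that adjustment your argument is correct and matches the intent of the cited theorem, including the role of $\charr\F>3$ in excluding the Suzuki/Ree and small-field exceptions.
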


\begin{remark}
Note that in \cite{lieType}, the field of definition for all the groups is $\Fc$ meaning ``simple algebraic group'' there means ``absolutely simple algebraic group'' here.
\end{remark}

We can then use the next theorem and lemma to replace the assumption of absolute simplicity with simplicity:
\begin{thm}[{\cite[Theorem 22.121]{Milne}}]
\label{semi simple is an almost product of simple}
Let $G$ be a semi-simple algebraic group defined over an algebraically closed field $\K$. Then there are finitely many closed and simple normal subgroups: $G_1,...,G_r$ and the product map: $G_1\times G_2 \times ... \times G_r\to G$ is an isogeny.
\end{thm}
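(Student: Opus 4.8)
The plan is to reduce the statement to the combinatorics of root systems, where the decomposition into ``simple pieces'' is visible. Fix a maximal torus $T\subseteq G$ and let $\Phi\subseteq X^*(T)$ be the root system of $(G,T)$. Since $G$ is semisimple, $\Phi$ has full rank, $\operatorname{rank}\Phi=\dim T$, and $G$ is generated by its root subgroups $U_\alpha$ ($\alpha\in\Phi$) --- in particular $T$ lies in the subgroup they generate, because each coroot image $\alpha^\vee(t)$ lies in $\langle U_\alpha,U_{-\alpha}\rangle$. Decompose $\Phi=\Phi_1\sqcup\cdots\sqcup\Phi_r$ into its irreducible components; recall that roots in distinct components are orthogonal and that the Weyl group splits as $W=W_1\times\cdots\times W_r$. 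For each $i$ let $G_i$ be the closed connected subgroup generated by $\{U_\alpha : \alpha\in\Phi_i\}$. I claim these are the desired $G_1,\dots,G_r$.

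First, the factors pairwise commute: if $\alpha\in\Phi_i$, $\beta\in\Phi_j$ with $i\neq j$, then $\alpha\perp\beta$, so no positive integral combination $m\alpha+n\beta$ is a root, and the Chevalley commutator relations force $[U_\alpha,U_\beta]=1$; hence $[G_i,G_j]=1$ for $i\neq j$. Since $G=\langle U_\alpha : \alpha\in\Phi\rangle$ and $\Phi=\bigsqcup_i\Phi_i$, we get $G=G_1G_2\cdots G_r$, and because the factors commute the multiplication map $\mu\colon G_1\times\cdots\times G_r\to G$ is a surjective homomorphism. Each $G_i$ is normal in $G$: it is stabilized under conjugation by $T$ and by every $U_\beta$ (for $\beta\in\Phi_i$ this is immediate, and for $\beta\notin\Phi_i$ the subgroup $U_\beta$ centralizes $G_i$ by the previous sentence), and these generate $G$.

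To see that $\mu$ is an isogeny it suffices to compare dimensions. Each $G_i$ is a connected semisimple group with maximal torus the subtorus $T_i$ generated by $\{\alpha^\vee(\K^\times):\alpha\in\Phi_i\}$ and with root system $\Phi_i$, so $\dim G_i=|\Phi_i|+\operatorname{rank}\Phi_i$. Summing over $i$ gives $\dim(G_1\times\cdots\times G_r)=|\Phi|+\operatorname{rank}\Phi=|\Phi|+\dim T=\dim G$. A surjective homomorphism of connected algebraic groups of equal dimension has finite kernel, so $\mu$ is an isogeny, as required; in particular $r$ is finite since $\Phi$ is finite.

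It remains to check that each $G_i$ is simple, i.e. has no proper nontrivial closed connected normal subgroup. Here I would invoke the standard classification of connected normal subgroups of a connected semisimple group: such a subgroup is generated by the root subgroups it contains, and the set of roots so obtained is a union of irreducible components of the ambient root system (valid in every characteristic). Since $\Phi_i$ is irreducible, the only candidate subsets are $\varnothing$ and $\Phi_i$, corresponding to the trivial subgroup and to $G_i$ itself. This classification, together with the structural inputs used above (the root datum of full rank, the Chevalley relations, the generation of a semisimple group by its root subgroups), is exactly the part I expect to be the main work; it is, however, precisely the content of the structure theory of reductive groups developed in the references (Springer, Milne), and with it in hand the proof is just bookkeeping with root systems.
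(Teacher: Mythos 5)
The paper does not prove this statement at all---it is quoted verbatim from the cited reference (Milne, \emph{Algebraic Groups}, Thm.\ 22.121)---and your sketch is essentially the standard textbook proof given there and in Springer/Humphreys: decompose $\Phi$ into irreducible components, let $G_i$ be generated by the corresponding root subgroups, use the Chevalley commutator relations and a dimension (or finite-centre) count for the isogeny, and deduce simplicity of each $G_i$ from the classification of closed connected normal subgroups as those generated by the root subgroups they contain. Your outline is correct; the one step you black-box (that a closed connected normal subgroup corresponds to a union of irreducible components of the root system, in any characteristic) is indeed where the real work lives, but invoking it is reasonable given that the statement itself is a citation.
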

\begin{lemma}[{\cite[Proposition 22.129]{Milne}}]
Let $G,G_1,...,G_r$ be as in the previous theorem. Assume also that $G$ is defined over a perfect sub-field $\F$ and that $G$ is simple and simply connected(as an $\F$-group). Then $G(\F)\iso G_1(\F')$ where $\F'$ is the field, fixed under $\{\sigma\in \Gal(\Fc/\F)|\sigma (G_1)=G_1\}$.
\end{lemma}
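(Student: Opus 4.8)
The plan is to realize $G$ as a Weil restriction of scalars, $G \cong \mathrm{Res}_{\F'/\F} G_1'$ for a suitable simple simply connected group $G_1'$ over $\F'$, and then to read off the $\F$-points from the defining adjunction of Weil restriction, namely $(\mathrm{Res}_{\F'/\F}G_1')(\F) = G_1'(\F')$. Write $\Gamma := \Gal(\Fc/\F)$, which is a profinite group acting on $G_{\Fc}$ since $\F$ is perfect. The decomposition $G_{\Fc} = G_1\times\cdots\times G_r$ of the previous theorem is canonical: the $G_i$ are exactly the minimal nontrivial closed connected normal subgroups of $G_{\Fc}$, so $\Gamma$ permutes the set $\{G_1,\dots,G_r\}$. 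Because $G$ is simply connected, a simply connected semisimple group over an algebraically closed field is the honest direct product of its (simply connected, simple) factors, so the product map is an isomorphism of group schemes, not merely an isogeny. I then claim the $\Gamma$-action on $\{G_1,\dots,G_r\}$ is transitive: if $\mathcal{O}\subsetneq\{G_1,\dots,G_r\}$ were a proper orbit, then $\prod_{G_i\in\mathcal{O}}G_i$ would be a $\Gamma$-stable, hence (by Galois descent for the $\F$-scheme $G_{\Fc}$) an $\F$-defined, closed connected normal nontrivial proper subgroup of $G$, contradicting simplicity of $G$ over $\F$.

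Next, let $\Delta := \{\sigma\in\Gamma : \sigma(G_1)=G_1\}$, an open subgroup of index $r$; then $\F' := \Fc^{\Delta}$ is a separable (since $\F$ perfect) degree-$r$ extension of $\F$ with $\Gal(\Fc/\F')=\Delta$. Since $G_1$ is a $\Delta$-stable closed subgroup of $G_{\Fc}$, Galois descent along $\Fc/\F'$ yields a closed subgroup $G_1'\subseteq G_{\F'}$, defined over $\F'$, with $(G_1')_{\Fc}=G_1$; it is again simple and simply connected over $\F'$. Choosing coset representatives $1=\sigma_1,\sigma_2,\dots,\sigma_r$ for $\Gamma/\Delta$ identifies $G_i$ with $\sigma_i(G_1)$, and the resulting $\Fc$-isomorphism $G_{\Fc}\xrightarrow{\ \sim\ }\prod_{i}\sigma_i(G_1')=(\mathrm{Res}_{\F'/\F}G_1')_{\Fc}$ is $\Gamma$-equivariant for the natural $\Gamma$-actions on both sides; by descent it is induced by an $\F$-isomorphism $G\cong\mathrm{Res}_{\F'/\F}G_1'$. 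Applying the Weil-restriction adjunction gives $G(\F)=(\mathrm{Res}_{\F'/\F}G_1')(\F)=G_1'(\F')$, which is exactly the asserted isomorphism $G(\F)\iso G_1(\F')$ once one reads $G_1$ as shorthand for its $\F'$-model $G_1'$.

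The step I expect to be the main obstacle is the bookkeeping in this descent argument: one must check that the evident $\Fc$-isomorphism between $G_{\Fc}$ and the base change of the Weil restriction is genuinely $\Gamma$-equivariant, keeping track of the fact that the $\Gamma$-action on $(\mathrm{Res}_{\F'/\F}G_1')_{\Fc}\cong\prod_{\Gamma/\Delta}\sigma_i(G_1')$ both twists the individual factors and permutes them according to the coset action, and that $G_1$ descends precisely to $\F'=\Fc^{\Delta}$ and not only to some larger intermediate field. Both points rely essentially on $\F$ being perfect, so that $\Fc/\F$ is Galois and standard Galois descent for closed subschemes of quasi-projective $\F$-schemes applies; no characteristic or "$p$ large" hypothesis enters, which is consistent with this being purely a structural statement about forms of semisimple groups.
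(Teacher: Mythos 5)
Your argument is correct, and it is essentially the canonical proof of the cited fact: the paper itself offers no proof of this lemma, quoting it directly from Milne (Proposition 22.129), and Milne's statement is precisely the realization of a simple simply connected $\F$-group as $\mathrm{Res}_{\F'/\F}G_1'$ for the $\F'$-descended form $G_1'$ of a geometrically simple factor, followed by the adjunction $(\mathrm{Res}_{\F'/\F}G_1')(\F)=G_1'(\F')$. Your two key reductions --- that simple connectedness upgrades the isogeny $G_1\times\cdots\times G_r\to G_{\Fc}$ to an isomorphism, and that simplicity of $G$ over $\F$ forces $\Gal(\Fc/\F)$ to act transitively on the factors (else the product over a proper orbit descends to a forbidden normal $\F$-subgroup) --- together with Galois descent of $G_1$ to $\F'=\Fc^{\{\sigma\,:\,\sigma(G_1)=G_1\}}$, are exactly the ingredients of that proof, so nothing is missing beyond the equivariance bookkeeping you already flag and correctly resolve.
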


\begin{remark}
Note that in \cite{Milne}, they are using the notation ``almost simple algebraic group'' group, which means ``simple algebraic group'' in our notations.
\end{remark}

\begin{remark}
Note that in the lemma, since $G$ is simply connected, we get that the map from the theorem is an isomorphism. We also get that each $G_i$ is simply connected(otherwise we may extend its universal cover to an isogeny cover of $G$). That indeed shows we can replace absolute simplicity with simplicity.
\end{remark}

We now want to bound $N(G(\F))$ for any semi-simple simply connected group $G$. 
% sssc - semi simple simply connected
\begin{lemma}
\label{the thm for semi simple simply connected groups}
There exist a function $C_{sssc}: \N \rightarrow \N$ s.t. for any positive integer $n$ and semi-simple, simply-connected algebraic group $G$, defined over a finite field $\F$, and with dimension less then $n$, we have: $N(G(\F)) \leq C_{sssc}(n)$.
\end{lemma}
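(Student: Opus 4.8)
\textbf{Proof plan for Lemma \ref{the thm for semi simple simply connected groups}.}
The plan is to reduce the semi-simple simply-connected case to the already-established quasi-simple case (Theorem \ref{the thm for quasi simple groups}) via the structure theory quoted just above. First I would invoke Theorem \ref{semi simple is an almost product of simple}: writing $G$ for a semi-simple simply-connected group over $\F$ with $\dim G < n$, there are finitely many closed simple normal subgroups $G_1,\dots,G_r$ with an isogeny $G_1\times\cdots\times G_r \to G$. Since $G$ is simply connected this isogeny is an isomorphism and each $G_i$ is itself simply connected (by the remarks following the lemma above). Crucially, $r \le \dim G < n$, since each $G_i$ has dimension at least $3$ (indeed at least $1$ suffices for the bound $r<n$).

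Next, the $G_i$ need not be defined over $\F$ individually — the Galois group $\Gamma = \Gal(\Fc/\F)$ permutes them. Grouping the factors into $\Gamma$-orbits and applying the lemma quoted above (the one following Theorem \ref{semi simple is an almost product of simple}, i.e.\ \cite[Proposition 22.129]{Milne}) orbit by orbit, I would obtain $G(\F) \iso \prod_{j=1}^{s} H_j(\F_j)$, where each $H_j$ is an absolutely simple simply-connected group over a finite extension $\F_j$ of $\F$, each $H_j$ has dimension $< n$ (hence rank $< n$), and $s \le r < n$. Here I would need the elementary fact that $\dimirr$ and $N$ behave well under direct products: for finite groups $A,B$ one has $\dimirr(A\times B) = \dimirr(A)\cdot\dimirr(B)$ (pointwise product of the two sets), hence $N(A\times B) \le N(A)\cdot N(B)$, so $N(G(\F)) \le \prod_j N(H_j(\F_j))$.

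Then, provided $\charr\F > 3$ (which is harmless since for bounded characteristic the group $G(\F_{p^k})$ has bounded order whenever one also bounds the field — but more cleanly, the statement should simply be read together with Theorem \ref{second big thm}'s running hypothesis that $p$ is large, or one absorbs small $p$ as in the remarks after Theorem \ref{second big thm}), Tits' theorem (quoted above, \cite[Theorem 24.17]{lieType}) tells us each $H_j(\F_j)$ is quasi-simple of Lie type, of rank $r_j < n$. Applying Theorem \ref{the thm for quasi simple groups} gives $N(H_j(\F_j)) \le C_{qs}(r_j) \le C_{qs}(n)$. Combining, $N(G(\F)) \le C_{qs}(n)^{s} \le C_{qs}(n)^{n}$, so one may set $C_{sssc}(n) := C_{qs}(n)^{n}$.

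The main obstacle is bookkeeping rather than a deep idea: one must carefully track that passing from $G$ over $\F$ to the individual absolutely simple factors $H_j$ over extension fields $\F_j$ does not lose simple-connectedness (handled by the remark above) and does not increase dimension or rank (clear, since $\dim H_j \le \dim G < n$ and rank is bounded by dimension), and one must be slightly careful about the characteristic hypothesis needed for Tits' theorem — this is the only place small primes intervene, and it is precisely why the ambient Theorem \ref{second big thm} carries a "$p$ large enough" clause. Everything else is the product formula for $\dimirr$ and the bound $r < n$.
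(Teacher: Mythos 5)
Your proposal follows essentially the same route as the paper's proof: decompose $G$ into its simple factors via Theorem \ref{semi simple is an almost product of simple} (an isomorphism by simple-connectedness), group the factors into Galois orbits, pass to absolutely simple simply-connected groups over extension fields via \cite[Proposition 22.129]{Milne}, and conclude with Tits' theorem and Theorem \ref{the thm for quasi simple groups}, with the bound $C_{qs}(n)^{n}$ coming from the product formula for $N$ and the bound on the number of factors. The only step you gloss over, and which the paper proves explicitly, is that each Galois-orbit product is simple as an $\F$-group --- a hypothesis required to invoke the quoted lemma; the verification is short (the paper's commutator argument showing any nontrivial closed connected normal $\F$-subgroup $K$ satisfies $G_1\subseteq K$), but it should be included.
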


\begin{proof}
Let $n,G,\F$ be from the given. Since $G$ is simply connected, and by Theorem \ref{semi simple is an almost product of simple}, we can write $G = G_1\times G_2 \times ... \times G_r$ (where each $G_i$ is like in Theorem \ref{semi simple is an almost product of simple}). Now, denote: $\Gamma:= \Gal(\Fc/\F)$. Then $\Gamma$ acts on the normal and simple algebraic subgroups of $G$. We can now denote $G=H_1\times H_2 \times ... \times H_k$ where each $H_i$ is the product of all the $G_j$-s in some orbit of $\Gamma$. We claim that each $H_i$ is simple (as an $\F$-group). Indeed, W.L.O.G we may assume $i=1$ and $H_1=G_1\times...\times G_l$ for some $l$. Assume $K\neq 1$ is a closed, connected, normal algebraic subgroup of $H_1$, that is defined over $\F$. For each $i$, let $\phi_i$ be the projection from $H_1$ to $G_i$. By absolute simplicity of the $G_i$-s, we then have: $\phi_i|_K$ is onto or the trivial map for all $i$. Now, since $K$ is defined over $\F$ and isn't the trivial group, we have that $\phi_i|_K$ is onto for all $i$. Now, since $K$ and $G_1$ are normal we have: $[K,G_1]\subseteq K\cap G_1$. But since all the $G_i$-s commute we have: $[K,G_1]=[\phi_1(K),G_1]=[G_1,G_1]=G_1$, so $G_1\subseteq K$ meaning $K=H_1$ like we wanted. Now, like in the previous remark, we get that each $G_i$ is simply connected and therefore also $H$. Now the claim follows from the case that $G$ is simple (using the fact that $k\leq \dim G\leq n$ and the dimension of each $H_i$ is less or equal to the dimension than $G$).
\end{proof}
We can now move on to reductive groups:
\begin{lemma}
There exist a function $C_{red}: \N \rightarrow \N$ s.t. for any positive integer $n$, a finite field $\F$ of characteristic $p>3$ and a connected reductive group $G$, defined over $\F$, with dimension at most $n$, we have: $N(G(\F))<C_{red}(n)$.
\end{lemma}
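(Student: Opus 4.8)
The plan is to reduce the reductive case to the already-established semi-simple simply-connected case (Lemma \ref{the thm for semi simple simply connected groups}) by peeling off the radical and then passing to a simply-connected cover, controlling the index losses at each step via Lemma \ref{N(H)~<N(G)} and Lemma \ref{bound on coker}. Concretely, given a connected reductive $G$ of dimension at most $n$ defined over $\F$, let $R = R(G)$ be its radical and $G'$ its commutator subgroup. By Lemma \ref{radical}, $G = R\cdot G'$ with $R$ a central torus and $R\cap G'$ finite, so the multiplication map $R\times G' \to G$ is an isogeny of algebraic groups defined over $\F$. Its kernel $K$ embeds into $R\cap G'$, which is a finite subgroup of the center of the fixed semisimple group $G'$; since $|Z(G'(\Fc))|\leq 2^{\dim G'}\leq 2^n$ by Theorem \ref{semi simple lemma}(3), we get $|K(\F)|\leq 2^n$ uniformly. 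Applying Lemma \ref{bound on coker} to this isogeny gives $[G(\F):(R(\F)\times G'(\F))/(\ldots)]\le 2^n$, and then Lemma \ref{N(H)~<N(G)} yields $N(G(\F)) \leq N(R(\F))^{2^n}\cdot N(G'(\F))^{2^n}\cdot (\text{bounded factor})$ once we can bound $N$ for the factors and for the image of the product group, which by the same lemma is controlled by $N$ of the direct product.

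Next I would handle the two factors. For the torus $R$: since $R$ is abelian, $R(\F)$ is a finite abelian group, so every irreducible representation of $R(\F)$ is one-dimensional and $N(R(\F)) = 1$. For $G'$: it is semisimple of dimension at most $n$, but not necessarily simply connected, so let $\pi:\widetilde{G'}\to G'$ be its universal cover, a central isogeny defined over $\F$ with kernel contained in $Z(\widetilde{G'})$, hence of $\F$-points bounded by $2^{\dim \widetilde{G'}} = 2^{\dim G'}\leq 2^n$ again by Theorem \ref{semi simple lemma}(3). By Lemma \ref{univertsal cover}, $\pi(\widetilde{G'}(\F)) = G'(\F)'$, and by Lemma \ref{bound on the index of the derivative}, $[G'(\F):G'(\F)']\leq 2^{\dim G'}\leq 2^n$; composing, $G'(\F)$ contains $\pi(\widetilde{G'}(\F))$ as a subgroup of index at most $2^n$ (and $\widetilde{G'}(\F)$ surjects onto it with kernel of size $\leq 2^n$). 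Two more applications of Lemma \ref{N(H)~<N(G)} then bound $N(G'(\F))$ in terms of $N(\widetilde{G'}(\F))$, which is at most $C_{sssc}(n)$ by Lemma \ref{the thm for semi simple simply connected groups}. Assembling all the index bounds — every index or kernel size encountered is at most $2^n$, and the number of composition steps is absolute — gives an explicit $C_{red}(n)$ of the shape $(C_{sssc}(n)+1)^{O(2^n)}\cdot 2^{O(n 2^n)}$, which is a function of $n$ alone.

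The only mild subtlety, and the step I expect to require the most care, is bookkeeping the passage through the isogeny $R\times G'\to G$ and through $\widetilde{G'}\to G'$ simultaneously: one must be careful that Lemma \ref{N(H)~<N(G)} is applied in the right direction at each stage (it goes both up along surjections with bounded kernel and along inclusions with bounded index, via $\dimirr(H)\subseteq \Sigma(\dimirr(G),[H:\phi(G)])$), and that the ``bounded'' quantities are genuinely bounded in $n$ only and not in $\F$ — this is exactly where Theorem \ref{semi simple lemma}(3) and Lemma \ref{bound on the index of the derivative} (both requiring $\charr\F>3$, whence the hypothesis $p>3$) are essential. There is no issue with $\F$-rationality of the radical or commutator subgroup since both are characteristic and $\F$ is perfect, and Lemma \ref{G/H(Fp)} is not even needed here because we work directly with isogenies and Lemma \ref{bound on coker} rather than with quotients. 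Once the constants are tracked, the proof is a purely formal assembly of the cited results.
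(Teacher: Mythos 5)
Your proof is correct and takes essentially the same route as the paper: the paper simply composes your two reduction steps into the single isogeny $\widetilde{G'}\times R(G)\to G$ and applies Lemma \ref{bound on coker} once (so it never needs Lemma \ref{univertsal cover} or Lemma \ref{bound on the index of the derivative}), but the content — peel off the central torus, pass to the simply connected cover, control all indices by $2^n$ via the center bound, and finish with Lemma \ref{the thm for semi simple simply connected groups} and Lemma \ref{N(H)~<N(G)} — is identical. One minor point of care: Theorem \ref{semi simple lemma}(3) is stated for simply connected semi-simple groups, so to bound the kernel $R\cap G'\subseteq Z(G')$ you should bound $|Z(G')|$ by $|Z(\widetilde{G'})|$ (the former is a quotient of the latter under the central isogeny), which is exactly how the paper handles its kernel $K$.
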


\begin{proof}
Let $n,p,G$ be as stated. Let $G'$ be the commutator group of $G$. Let $\psi:\Gtt\to G'$ be the universial cover of $G'$. Let $\phi:\Gtt \times R(G) \to G$ be the composition of the universal cover map (in the first coordinate) and the multiplication map. Let $K:=\ker\phi$. Now, notice that since $\phi$ is a composition of isogenies (since by Lemma \ref{radical}(3), the multiplication map: $G'\times R(G)\to G$ is an isogeny), it is also an isogeny. This implies that $K\subseteq Z(\Gtt \times R(G))=Z(\Gtt)\times R(G)$. Now, let $\pi_1:K\to \Gtt$ be the projection on the first coordinate. it is clear that $\pi_1$ is an injection which means by Theorem \ref{semi simple lemma}(3), that $|K(\Fc)|\leq |Z(\Gtt)(\Fc)|\leq 2^n$. By Lemma \ref{bound on coker}, we then have: $[G(\F):\phi(\Gtt\times R(G))(\F)]\leq |K(\F)|\leq |K(\Fc)| \leq 2^n$.
By Lemma \ref{N(H)~<N(G)}, it is enough to bound $N(\Gtt(\F)\times R(G)(\F))=N(\Gtt(\F))$ (since $R(G)(\F)$ is abelian). But $\Gtt$ is a semi-simple simply connected linear algebraic group, so the claim follows by Lemma \ref{the thm for semi simple simply connected groups}.

\end{proof}

\begin{cor}
\label{the thm for red. groups}
There exist a function $C_{red2}: \N \rightarrow \N$ s.t. for any positive integer $n$ and a finite field $\F$ of characteristic $p>3$, there exist a set of positive integers $S_{n,\F}$ of cardinality at most $C_{red2}(n)$ s.t. for any connected reductive group $G$, defined over $\F$, with dimension at most $n$, we have: $\dimirr(G(\F))\subseteq S_{n,\F}$.
\end{cor}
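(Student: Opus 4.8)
The plan is to combine the previous lemma (which bounds $N(G(\F))$ for connected reductive groups of bounded dimension by $C_{red}(n)$) with the bound on the \emph{number} of connected reductive groups over $\F$ of bounded dimension (Lemma \ref{bound on the number of reductive groups}, giving at most $C_{rn2}(n)$ isomorphism classes). The point is that $\dimirr(G(\F))$ depends on $G$ only through its isomorphism class as an $\F$-group, so the union of these sets ranges over at most $C_{rn2}(n)$ sets, each of size at most $C_{red}(n)$. Hence taking $S_{n,\F}$ to be this union and $C_{red2}(n) := C_{rn2}(n)\cdot C_{red}(n)$ does the job.

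In more detail: first, for a finite field $\F$ of characteristic $p>3$, let $\mathcal{R}_{n,\F}$ be a set of representatives for the isomorphism classes (as $\F$-groups) of connected reductive groups defined over $\F$ with dimension at most $n$. By Lemma \ref{bound on the number of reductive groups}, $|\mathcal{R}_{n,\F}| \le C_{rn2}(n)$. Second, define $S_{n,\F} := \bigcup_{H \in \mathcal{R}_{n,\F}} \dimirr(H(\F))$. By the previous lemma, each $\dimirr(H(\F))$ has cardinality less than $C_{red}(n)$, so $|S_{n,\F}| \le C_{rn2}(n)\cdot C_{red}(n) =: C_{red2}(n)$. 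Third, given any connected reductive $G$ over $\F$ with $\dim G \le n$, there is an $\F$-isomorphism $G \iso H$ for some $H \in \mathcal{R}_{n,\F}$; this isomorphism induces a group isomorphism $G(\F) \iso H(\F)$, hence $\dimirr(G(\F)) = \dimirr(H(\F)) \subseteq S_{n,\F}$, as desired.

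There is essentially no obstacle here: the corollary is a packaging statement, folding the uniform bound on $N$ over a single reductive group together with the finiteness of the relevant moduli. The only small point worth being careful about is that we need $\dimirr(G(\F))$ to be an $\F$-isomorphism invariant of $G$ (not merely an $\Fc$-isomorphism invariant), which is immediate since an $\F$-isomorphism of algebraic groups restricts to an isomorphism of the groups of $\F$-points, and $\dimirr$ of a finite group is an isomorphism invariant. One should also note that $C_{red}$ and $C_{rn2}$ are both increasing, so $C_{red2}$ can be taken increasing as well, which is all that is needed downstream.
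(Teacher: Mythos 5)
Your proposal is correct and matches the paper's argument, which simply cites the previous lemma (the uniform bound $N(G(\F)) \leq C_{red}(n)$) together with Lemma \ref{bound on the number of reductive groups} and calls the corollary immediate; you have just spelled out the routine details (choosing representatives of the finitely many $\F$-isomorphism classes and taking the union of their $\dimirr$ sets). No issues.
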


\begin{proof}
This is an immediate corollary of the above lemma and Lemma \ref{bound on the number of reductive groups}.
\end{proof}
\subsection{Proof of Theorem \ref{second big thm}}
Before we will prove the Theorem \ref{second big thm}, we want to state a useful lemma:

\begin{lemma}
\label{bound on the options of cardinalities of algebraic groups}
For any positive integers $n,M_{\cmp}$, a non-negative integer $M_{\dim}$ and a finite field $\F$, denote by $GC(n,M_{\dim},M_{\cmp},\F)$ the set of all possible cardinalities for $\F$-points of algebraic subgroups of $(\GL_n)_{\Fc}$, which are defined over $\F$, have dimension at most $M_{\dim}$ and complexity at most $M_{\cmp}$. Denote: $p:=\charr\F$. Then for large enough $p$, we have that $|GC(n,M_{\dim},M_{\cmp},\F)|$ is bounded uniformly on $\F$.

\begin{remark}
Again, like in Theorem \ref{second big thm}, we may omit $M_{\dim}$. However, $M_{\dim}$ is there to simplify the proof of this lemma and the proof of Theorem \ref{second big thm}.
\end{remark}

\begin{proof}
Let $n,M_{\dim},M_{\cmp},\F$ as given above. Let $G$ be an algebraic group with the properties mentioned above. Let $G^0$ be the connectivity component of the identity element in $G$. Let $U$ be the unipotent radical of the $G^0$. We then have by Lemma \ref{G/H(Fp)}: $|G(\F)|=[G(\F):G^0(\F)]\cdot [G^0(\F):U(\F)]\cdot |U(\F)|=|(G/G^0)(\F)|\cdot |(G^0/U)(\F)| \cdot |U(\F)|$. By Lemma \ref{cmp(X^0)}, we have: $|(G/G^0)(\F)|\leq C_{ic}(M_{cmp})$. Now, since $(G^0/U)(\F)$ is reductive, by Lemma \ref{bound on the number of reductive groups}, the number of options for $|(G^0/U)(\F)|$ is bounded by $C_{rn2}(M_{\dim})$. As for $U(\F)$, by Lemmas \ref{cmp(U) is small} and \ref{cardinality of a unipotent group is a power of |F|}, we have: $|U(\F)|=|\F|^k$ for some $0\leq k\leq n^2$. This means we have: $|GC(n,M_{\dim},M_{\cmp},\F)|\leq C_{ic}(M_{\cmp})\cdot C_{rn2}(M_{\dim}) \cdot n^2$.
\end{proof}

\end{lemma}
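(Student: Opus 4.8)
The plan is to control $|G(\F)|$ for $G$ ranging over such a family by walking down the canonical chain $U \trianglelefteq G^0 \trianglelefteq G$, where $G^0$ is the identity component of $G$ and $U$ is the unipotent radical of $G^0$, and showing that each factor in the resulting product formula for $|G(\F)|$ takes only boundedly many values. Since $\F$ is perfect, both $G^0$ and $U$ are defined over $\F$, and since $G^0$ is connected, Lemma \ref{G/H(Fp)} yields
\[
|G(\F)| = |(G/G^0)(\F)| \cdot |(G^0/U)(\F)| \cdot |U(\F)|,
\]
so it suffices to bound, uniformly in $\F$ and for $p$ large, the number of possibilities for each of the three factors as $G$ varies.

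First I would handle the component factor: $[G:G^0]$ equals the number of (geometrically irreducible) components of $G$, so Lemma \ref{cmp(X^0)} gives $|(G/G^0)(\F)| \le |(G/G^0)(\Fc)| = [G:G^0] \le C_{ic}(M_{\cmp})$, hence at most $C_{ic}(M_{\cmp})$ values. Next, Lemma \ref{cmp(X^0)} also bounds $\cmp(G^0)$ by $C_{ic}(M_{\cmp})$, which lets me invoke Lemma \ref{cmp(U) is small} to get $\cmp(U) \le C_{\cmp U}(C_{ic}(M_{\cmp}))$; then for $p$ large Corollary \ref{cardinality of a unipotent group is a power of |F|} forces $|U(\F)| = |\F|^k$ with $0 \le k \le n^2$, so the unipotent factor takes at most $n^2+1$ values. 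Finally, $G^0/U$ is a connected reductive group defined over $\F$ with $\dim(G^0/U) \le \dim G \le M_{\dim}$, so Lemma \ref{bound on the number of reductive groups} places it in one of at most $C_{rn2}(M_{\dim})$ isomorphism classes over $\F$, bounding the number of possible values of $|(G^0/U)(\F)|$ by $C_{rn2}(M_{\dim})$. Multiplying, $|GC(n,M_{\dim},M_{\cmp},\F)| \le C_{ic}(M_{\cmp}) \cdot C_{rn2}(M_{\dim}) \cdot (n^2+1)$, independent of $\F$.

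The argument is essentially bookkeeping once these structural inputs are arranged, so there is no single hard step; the points needing care are tracking how the complexity bound propagates (from $G$ to $G^0$ via Lemma \ref{cmp(X^0)}, and from $G^0$ to $U$ via Lemma \ref{cmp(U) is small}, which is where one needs $p > n$), and checking that every quotient to which Lemma \ref{G/H(Fp)} is applied is taken by a connected subgroup — the only quotient by a possibly disconnected subgroup being $G/G^0$, for which one simply uses the crude inclusion $(G/G^0)(\F) \subseteq (G/G^0)(\Fc)$. One should also note that the auxiliary parameter $M_{\dim}$ is genuinely used here: it is exactly what makes the reductive-quotient count finite through Lemma \ref{bound on the number of reductive groups}.
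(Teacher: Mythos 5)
Your proposal is correct and follows essentially the same route as the paper: the factorization $|G(\F)| = |(G/G^0)(\F)|\cdot|(G^0/U)(\F)|\cdot|U(\F)|$ via Lemma \ref{G/H(Fp)}, the component bound from Lemma \ref{cmp(X^0)}, the unipotent-radical count from Lemmas \ref{cmp(U) is small} and \ref{cardinality of a unipotent group is a power of |F|}, and the reductive-quotient count from Lemma \ref{bound on the number of reductive groups}. Your extra bookkeeping (propagating the complexity bound to $G^0$ before invoking Lemma \ref{cmp(U) is small}, and counting $n^2+1$ rather than $n^2$ possible values of $k$) only makes the same argument slightly more careful.
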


We are now ready to prove Theorem \ref{second big thm}:

\begin{proof} [Proof of Theorem \ref{second big thm}]
Denote: \[\cD(n,M_{\dim},M_{\cmp},\F):=\cup_{G\in\cA(n,M_{\dim},M_{\cmp},\F)}\dimirr(G(\F)).\] We will prove the claim using induction on $M_{\dim}$. Let $G \in \cA(n,M_{\dim},M_{\cmp},p)$. We want to bound the options for $\dimirr(G(\F))$. By Lemmas \ref{G/H(Fp)} and \ref{cmp(X^0)}, we have: $[G(\F):G^0(\F)]=|(G/G^0)(\F)| \leq |C_{ic}(M_{\cmp})|$. By Lemma \ref{N(H)~<N(G)}, we have(with the notations from that lemma): \[\dimirr(G(\F))\subseteq\Sigma(\dimirr(G^0(\F)),C_{ic}(M_{\cmp})).\] Therefore, we may assume that $G$ is connected. If $M_{\dim}=0$ then $G=1$ and the claim is clear. So we may assume $M_{\dim}>0$. Because of Lemma \ref{the thm for red. groups}, if $p>3$, then we may assume that $G$ is not reductive. Because of Lemma \ref{the thm for unipotent groups}, if $p$ is large enough, then we may assume $G$ is not unipotent either. Let $U\not\in \{G,\{1\}\}$ be the unipotent radical of $G$ and let $H:=G/U$. We will prove that if $p$ is large enough, then any irreducible representation of $G(\F)$ is one of the followings:
\begin{enumerate}
    \item A tensor product of a representation of $G(\F)$ with dimension in $\dimirr(U(\F))$ and a representation of $H(\F)$ (when we identify representations of $H(\F)$ with representations of $G(\F)$ with trivial $U(\F)$ action).
    
    \item An induction of an irreducible representation of $K(\F)$ where $K$ is a proper algebraic subgroup of $G$ with complexity that is bounded by $C_{ni}(n,M_{\cmp})$ (see Lemma \ref{the non-isotypic case} for the definition of $C_{ni}$).

\end{enumerate}
Indeed, let $\rho$ be an irreducible representation of $G(\F)$. By Fong's first reduction(Theorem \ref{fong's first reduction}) (applied to $G(\F),U(\F)$), $\rho\downarrow_{U(\F)}$ is isotypic, or $\rho$ is an induction of an irreducible representation of $T(\chi)$ for some irreducible character $\chi$ of $U(\F)$ and $T(\chi)\neq G(\F)$. In the first case, by Corollary \ref{the isotypic case}, this is the situation of (1). In the second case, by Lemma \ref{the non-isotypic case}, this is the situation of (2) (for large enough $p$). \\ 
Now, we may denote by $A_1$ the set of all the dimensions of irreducible representations of $G(\F)$ from the first case, and by $A_2$ the same from the second case. By Lemmas \ref{the thm for red. groups} and \ref{the thm for unipotent groups}, we have that: $A_1 \subseteq\{|\F|^\frac{k}{2}\cdot d | 0\leq k \leq n^2,\;d\in S_{M_{\dim},\F}\}$ (where $S_{M_{\dim},p}$ is the set from Lemma $\ref{the thm for red. groups}$). As for $A_2$, take $\rho$ from the second case. Then $\rho = \psi \uparrow^{G(\F)}$ for some irreducible representation $\psi \in \irr K(\F)$. We then get:
$$\dim\psi \in \cD(n,M_{\dim}-1,C_{ni}(n,M_{\cmp}),\F),$$
(where the induction assumption is made on $K$, which is a proper subgroup of $G$, so it has a lower dimension since $G$ is connected).

We then get (using the notation of Lemma \ref{bound on the options of cardinalities of algebraic groups}) that: $$\dim\rho = [G(\F):K(\F)]\cdot \dim\psi \in \frac{GC(n,M_{\dim},M_{\cmp},\F)}{GC(n,M_{\dim},C_{ni}(n,M_{\cmp}),\F)}\cdot \cD(n,M_{\dim}-1,C_{ni}(n,M_{\cmp}),\F).$$ \\ \\ 
In conclusion, we have found a set, depending only on $n,M_{\dim},M_{\cmp},\F$, with cardinality that is bounded uniformly on $\F$, that contains $\dimirr(G)$. That proves the theorem.
\end{proof}

We can now prove the weaker version of Theorem \ref{second big thm}, presented in Section \S\ref{sec: intro}:

\begin{proof}[Proof of Theorem \ref{second big thm lighter version}]
Let $\cG$ be a smooth linear group scheme of finite type. We can write $\cG=\Spec(\Z[\{x_{i,j}\}_{i,j=1}^{n}\cup \{y\}]/I)$ for some positive integer $n$ and some ideal $I$ of the ring $\Z[\{x_{i,j}\}_{i,j=1}^{n}\cup \{y\}]$ that contains the polynomial $\det((x_{i,j}))\cdot y - 1$ (since $\GL_n = \Spec(\Z[\{x_{i,j}\}_{i,j=1}^{n}\cup \{y\}]/(\det((x_{i,j}))\cdot y - 1))$). Since $\Z$ is noetherian, $I$ is finitly generated. Let $S$ be its generating set and let $M_0$ be the maximal degree of all the polynomials in $S$. Denote: $M_1:=\max\{n^2+1,|S|,M_0\}$. Then, by the definition of complexity, for any finite field $\F$ we have: $\cmp((\cG)_{\Fc}) \leq M_1$. By $(*)$ and by Theorem \ref{second big thm} we have that there exists a constant $C$, s.t for any large enough prime $p$, and any $p$-power $q$, we have: $N(\cG(\Fq))<C$, as wanted.
\end{proof}

\section{Main theorem}
\label{sec: main thm}
In this section, we will prove Theorem \ref{main thm}.
\newline
For the poof of the theorem, we will use the following lemma:
\begin{lemma}
\label{equal zeta for large p}
Let $\cG$ be a smooth linear group scheme of finite type. Let $m$ be a positive integer, or $-1$. For $i\in \{1,2\}$, a prime $p$, a $p$-power $q$, and a positive integer $k$, denote $G_i(k,q)$ as in Theorem \ref{main thm}. Then for a fixed $k$, we have for large enough $p$: $\zeta_{G_1(k,q)}(2m) = \zeta_{G_2(k,q)}(2m)$.
\end{lemma}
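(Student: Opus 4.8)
The plan is to use the Frobenius formula (Theorem \ref{frob}) to translate the equality of zeta values into a purely arithmetic statement about counting solutions of an equation in the groups, and then to exploit the fact that both $G_1(k,q)$ and $G_2(k,q)$ arise as $\F_q$-points (resp. Witt-vector points) of the same group scheme $\cG$. Specifically, for a positive integer $m$, Theorem \ref{frob} gives $\zeta_{G}(2m) = |G|^{-(2m+1)} \cdot \#\{(x_1,y_1,\dots,x_{m+1},y_{m+1}) : \prod [x_i,y_i] = 1\}$, so it suffices to show that for large $p$ the orders $|G_1(k,q)| = |G_2(k,q)|$ agree and that the number of solutions to the word equation $\prod_{i=1}^{m+1}[x_i,y_i]=1$ agrees in the two groups. (The case $m=-1$, i.e. $s=-2$, is $\zeta_G(-2) = |G|^3 \cdot \#\{(x,y): [x,y]=1\} \cdot |G|^{-?}$ — one checks it is still a statement about solution counts of a commutator word, handled the same way; one should double-check the normalization here.)

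Next I would observe that $|\cG(\F_q[t]/(t^k))|$ and $|\cG(W_k(\F_q))|$ are both counted by point-counts of smooth affine $\Z$-schemes of finite type reduced mod $p$: smoothness of $\cG$ implies $|\cG(\F_q[t]/(t^k))| = q^{(k-1)\dim\cG}\cdot|\cG(\F_q)|$ and likewise $|\cG(W_k(\F_q))| = q^{(k-1)\dim\cG}\cdot|\cG(\F_q)|$, so these orders agree for all $q$ with $p$ not dividing some fixed integer (coming from a Lang–Weil / Weil-bound argument applied to $\cG$). More importantly, the solution set of $\prod_{i=1}^{m+1}[x_i,y_i]=1$ inside $\cG^{2(m+1)}$ is itself a finite-type $\Z$-scheme (a closed subscheme cut out by the equation $\mu = 1$ where $\mu$ is the word map), call it $\cW$; then $\#\{\text{solutions in }G_1(k,q)\} = |\cW(\F_q[t]/(t^k))|$ and $\#\{\text{solutions in }G_2(k,q)\} = |\cW(W_k(\F_q))|$. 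The problem reduces to: for a fixed finite-type $\Z$-scheme $\cW$ and fixed $k$, the point counts $|\cW(\F_q[t]/(t^k))|$ and $|\cW(W_k(\F_q))|$ agree for all $q$ whose characteristic is large enough.

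The key step — and the main obstacle — is this last reduction, which is exactly where the paper says it will invoke "an important lemma from motivic analysis." The statement that $|\cW(\F_q[t]/(t^k))| = |\cW(W_k(\F_q))|$ for $p \gg 0$ is a theorem of Denef–Loeser type: the generating function (Poincaré/Igusa-type series) counting jets is "motivic," so its value depends only on the reduction of $\cW$, and in particular the two truncated arc/Witt point counts coincide for almost all $p$ because they are both computed by the same motivic volume. I would quote this as a black box (the "important lemma from motivic analysis" referenced in the introduction) applied to the scheme $\cW$ built from the word map on $\cG$. With that in hand, the proof assembles as follows: fix $k$ and $m$; form the word-equation scheme $\cW \subseteq \cG^{2(m+1)}$ over $\Z$; by the motivic lemma there is $M_1$ with $|\cW(\F_q[t]/(t^k))| = |\cW(W_k(\F_q))|$ whenever $\charr \F_q > M_1$; by the (easier, Lang–Weil) argument there is $M_2$ with $|G_1(k,q)| = |G_2(k,q)|$ whenever $\charr \F_q > M_2$; then for $p > \max(M_1, M_2)$, Theorem \ref{frob} gives $\zeta_{G_1(k,q)}(2m) = \zeta_{G_2(k,q)}(2m)$, which is the claim. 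The one subtlety worth care is the $m=-1$ case: $\zeta_G(-2) = \sum_{\rho}(\dim\rho)^2$, which by Frobenius (with $n=0$, reading the formula at $2n-2 = -2$) equals $|G|^{-1}\cdot(\text{number of solutions to the empty product }=1) = |G|^{-1}\cdot|G|^{0}\cdot\dots$; I would just verify that it still reduces to a solution-count on a finite-type $\Z$-scheme (here essentially the class-equation count $\#\{(x,y):[x,y]=1\}$ up to powers of $|G|$) so that the same motivic input applies.
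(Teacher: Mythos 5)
Your proposal is correct and follows essentially the same route as the paper: express $\zeta_{G_i(k,q)}(2m)$ via Frobenius' formula as a commutator-word solution count divided by a power of $|G_i(k,q)|$, realize both the word-equation set and the group itself as $\F_q[t]/(t^k)$- resp. $\Zq/\mq^k$-points of fixed finite-type $\Z$-schemes, and invoke the motivic point-counting lemma (the paper's quoted Proposition 3.0.2 of \cite{motivic}) for $p\gg 0$. The only loose end you flag, the $m=-1$ case, resolves immediately since $\zeta_G(-2)=\sum_{\rho\in\irr(G)}(\dim\rho)^2=|G|$, so it is exactly the equality of group orders you already have (and is how the paper treats it).
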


For the proof of Lemma \ref{equal zeta for large p} we will need the following result from motivic integration:
\begin{lemma}[{\cite[Proposition 3.0.2]{motivic}}]
Let $\cX$ be a scheme of finite type and let $k$ be a positive integer. Then for any large enough prime $p$ and any $p$-power $q$, we have: $|\cX(\Zq/\mq^k)|=|\cX(\Fq[t]/t^k)|$. 
\end{lemma}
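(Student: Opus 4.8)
The plan is to deduce this from the Ax--Kochen--Ershov transfer principle in the strengthened, uniform-in-the-residue-field form of Pas and Cluckers--Loeser, realizing both point counts as values of a single definable (motivic) quantity attached to a local field. First I would reduce to the affine case: choose a finite affine open cover $\cX=\bigcup_a \cX_a$ over $\Z$; for any finite local ring $R$ with residue field $\F_q$, a point $\Spec R\to\cX$ sends the closed point into some $\cX_a$, and since $R$ is local, $\Spec R$ then lies entirely in that $\cX_a$. Hence $\cX(R)$ is obtained from the $\cX_{a_1}\cap\cdots\cap\cX_{a_s}$ by inclusion--exclusion, and it suffices to prove the statement for affine $\cX$. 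So assume $\cX=\Spec\big(\Z[x_1,\dots,x_n]/(f_1,\dots,f_r)\big)$; then for any $\Z$-algebra $R$ one has $\cX(R)=\{x\in R^n : f_1(x)=\dots=f_r(x)=0\}$, and the task is to count these when $R=\Zq/\mq^k$ and when $R=\Fq[t]/t^k$.

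Next, observe that both counts are \emph{the same} quantity expressed in the language of valued fields. For a non-archimedean local field $K$ with valuation ring $\O_K$, maximal ideal $\m_K$, and residue field $\F_q$, the set $\cX(\O_K/\m_K^k)$ is the image, under reduction modulo $\m_K^k$, of $\{x\in\O_K^n : \ord(f_i(x))\ge k \text{ for all } i\}$; since the $f_i$ have coefficients in $\Z$, this description is given by one and the same formula in the Denef--Pas language, with no restriction on the characteristic of $K$. Taking $K=\Qq$ yields $|\cX(\Zq/\mq^k)|$ and $K=\Fq((t))$ yields $|\cX(\Fq[t]/t^k)|$. I would then invoke the transfer principle: by elimination of valued-field quantifiers in the Denef--Pas language together with the Cluckers--Loeser transfer theorem for uniformly definable point-counting functions, there is an integer $M=M(\cX,k)$ so that for every prime $p>M$ and every $p$-power $q$ the value of this common definable function on $\Qq$ equals its value on $\Fq((t))$; that is precisely $|\cX(\Zq/\mq^k)|=|\cX(\Fq[t]/t^k)|$, uniformly in $q$. (As a sanity check: if $\cX$ is smooth over $\Z[1/M]$ then Hensel's lemma gives $|\cX(\O_K/\m_K^k)|=q^{(k-1)\dim\cX}\,|\cX(\F_q)|$ for both choices of $K$, and the residue field is $\F_q$ in each case, so the two sides are visibly equal; for a hypersurface one can also obtain the general case from Denef's formula for the associated Poincar\'e series, whose shape is controlled by the combinatorics of a normal-crossings resolution over $\Z[1/M]$ and is therefore insensitive to whether the base local field has residue characteristic $0$ or $p$.)

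The genuine obstacle is exactly this uniformity: the comparison must hold for all residue fields $\F_q$ of characteristic $p$ at once, with a single bound $M$ independent of $q$, and the equicharacteristic-$p$ side is where naive arguments fail --- there is no characteristic-$0$ lift of $\Fq((t))$, and polynomial normal forms degenerate in small characteristic. Controlling this is precisely the content of the Denef--Pas quantifier elimination and the Cluckers--Loeser specialization machinery, which I would cite in their uniform form (or, for the hypersurface case, replace by an explicit resolution-of-singularities computation over $\Z[1/M]$ while tracking the finitely many excluded primes). The remaining ingredients --- the reduction to the affine case and the rewriting of the point count as a Denef--Pas formula --- are routine.
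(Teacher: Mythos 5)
The paper offers no proof of this lemma---it is imported wholesale from the cited reference \cite[Proposition 3.0.2]{motivic}---and your proposal is essentially the standard argument behind such a statement: reduce to affine $\cX=\Spec(\Z[x_1,\dots,x_n]/(f_1,\dots,f_r))$, note that $\{x\in\O_K^n:\ord(f_i(x))\ge k\ \forall i\}$ is a union of fibers of reduction modulo $\m_K^k$ so that $|\cX(\O_K/\m_K^k)|$ equals $q^{nk}$ times its normalized Haar volume, and transfer this single Denef--Pas-definable volume between $K=\Qq$ and $K=\Fq((t))$ by the Cluckers--Loeser principle, uniformly in the residue field for $p\gg 0$. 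This is correct; the two points you leave implicit---converting ``cardinality of an image'' into a motivic quantity, which is precisely the union-of-fibers volume identity above, and the fact that the inclusion--exclusion in the affine reduction may require a further refinement into basic affine opens when intersections of charts are not affine---are both routine.
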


\begin{proof} [Proof of Lemma \ref{equal zeta for large p}]
Firstly, for $m=-1$, we need to prove that \[(*)|G_1(k,q)|=|G_2(k,q)|.\] But this is just the last lemma for $\cX=\cG$. We now assume that $m>0$. By Theorem \ref{frob}, we have: \[\zeta_{G_i(k,q)}(2m) = |G_i(k,q)|^{2m-1}|\{(g_1,...,g_m,h_1,...,h_m)\in G_i(k,q)^{2m}| [g_1,h_1]...[g_m,h_m]=1\}|.\] We can the define $\cX$ to be the scheme that satisfies that for any ring $R$ we have:  \[\cX(R)=\{(g_1,...,g_m,h_1,...,h_m)\in \cG(R)^{2m}\;|\; [g_1,h_1]...[g_m,h_m]=1\}.\] We can then use the last lemma for $\cX$ and $(*)$ to get what we want.  
\end{proof}

We are now ready to prove Theorem \ref{main thm}:
\begin{proof}[Proof of Theorem \ref{main thm}]
Let $J_k(\cG)$ be the k-th jet scheme of $\cG$. Since $\cG$ is smooth linear group scheme, $J_k(\cG)$ is also smooth linear group scheme. Now, for any field $\F$, we have: $(*)\;J_k(\cG)(\F)\iso \cG(\F[t]/(t^k))\overset{\mathrm{def}}{=}G_1(q,k)$ (as abstract groups). By $(*)$ and by Theorem \ref{second big thm lighter version} we have that there exists a constant $C$, s.t for any large enough prime $p$, and any $p$-power $q$, we have: $N(G_1(k,q))<C$. By Proposition \ref{equal zeta}, we have that it is enough to show that $\zeta_{G_1(k,q)}=\zeta_{G_2(k,q)}$ on $4\cdot C+1$ distinct points. By Lemma \ref{equal zeta for large p}, we have this for large enough $p$.
\end{proof}

\medskip

\bibliographystyle{unsrt}
\bibliography{bibi}

\end{document}